\newtheorem{thm}{Theorem}[section]
\newtheorem{theorem}[thm]{Theorem}
\newtheorem{lem}[thm]{Lemma}
\newtheorem{prop}[thm]{Proposition}
\newtheorem{cor}[thm]{Corollary}
\newtheorem{question}{Question}
\newtheorem{thmm}{Theorem}
\newtheorem{thmk}{Theorem}
\newenvironment{customcor}[1]
{\iinnercustomthm}
{\endiinnercustomthm}
\theoremstyle{definition}
\newtheorem{definition}[thm]{Definition}
\theoremstyle{remark}
\newtheorem{rem}[thm]{Remark}
\newtheorem*{prop proof}{Proof of Proposition}
\newcommand{\RR}{\mathbb{R}}      % for Real numbers
\newcommand{\ZZ}{\mathbb{Z}}        % for Integers
\newcommand{\EE}{\mathbb{E}}
\renewcommand{\SS}{\mathbb{S}}
\newcommand{\CN}{\mathcal{N}}
\newcommand{\lk}{{\rm{Lk}}}
\newcommand{\st}{{\rm{St}}}
\newcommand{\xcplx}{{\mathbf{X}_{\Gamma_A,\Gamma_B}}} %X-complex
\newcommand{\xcplxij}[2]{{\mathbf{X}_{\Gamma_{A_{#1#2}},\Gamma_{B_{#1#2}}}}} %ij X-complex
\newcommand{\xbarcplx}{{\overline{\mathbf{X}}}} 
\newcommand{\xtilde}{{\widetilde{\mathbf{X}}}}
\DeclarePairedDelimiter\abs{\lvert}{\rvert}		%absolute value
\DeclarePairedDelimiter\paren{(}{)}			%   These are to fix the spacing between 
\DeclarePairedDelimiter\braces{\{}{\}}			%   Also, use with * to get variable size
\newcommand{\dotsqcup}{\mathbin{\mathaccent\cdot\sqcup}}%disjoint union with dot
	\newcommand{\bigparen}[1]{\paren[\big]{#1}}
\newcommand{\interior}[1]{   
	\mathring{#1}
} 		%makes a better looking interior symbol (got from the internet) 
\newcommand{\CH}{\mathcal{H}}
\newcommand{\CU}{\mathcal{U}}
\newcommand{\fks}{\hat{\mathfrak{s}}}
\newcommand{\fkt}{\hat{\mathfrak{t}}}
\newcommand{\hpl}{\hat{\mathfrak{h}}}
\newcommand{\Hpl}{\widehat{\CH}}
\newcommand{\hsp}{\mathfrak{h}}
\newcommand{\fkb}{\hat{\mathfrak{b}}}
\newcommand{\fka}{\hat{\mathfrak{a}}}
\newcommand{\fkh}{\hat{\mathfrak{h}}}
\newcommand{\bigbrace}[1]{\braces[\big]{#1}}	
\newcommand{\bigmid}{\mathrel{\big|}}			%bigger \mid
\newcommand{\Hsp}{\CH}
\newcommand{\hexagon}{
	\begin{tikzpicture}
	\draw (0ex,0ex) -- (0.5ex,0.87ex) -- (1.5ex,0.87ex) -- (2ex,0ex) -- (1.5ex,-0.87ex) -- (0.5ex,-0.87ex) -- (0ex,0ex);
	\end{tikzpicture}
}
\newcommand{\boldhexagon}{
	\begin{tikzpicture}
	\draw (0,0) -- (0.5ex,0.87ex) -- (1.5ex,0.87ex) -- (2ex,0ex) -- (1.5ex,-0.87ex) -- (0.5ex,-0.87ex) -- (0ex,0ex);
	\filldraw (0,0)[black] circle (1pt); 
	\filldraw (1.5ex,0.87ex) [black] circle (1pt);
	\filldraw (1.5ex,-0.87ex) [black] circle (1pt);
	\end{tikzpicture}
}
\numberwithin{equation}{section}
\title{Hyperbolic groups with almost finitely presented subgroups}
\author{Robert Kropholler \\ {\small (with an appendix by Robert Kropholler and  Federico Vigolo)}}
\begin{document}
\maketitle

\begin{abstract}
	In this paper we create many examples of hyperbolic groups with subgroups satisfying interesting finiteness properties. We give the first examples of subgroups of hyperbolic groups which are of type $FP_2$ but not finitely presented. We give uncountably many groups of type $FP_2$ with similar properties to those subgroups of hyperbolic groups. Along the way we create more subgroups of hyperbolic groups which are finitely presented but not of type $FP_3$.
\end{abstract}
\section{Introduction}

Given a class of groups $\mathcal{C}$ it is natural to ask what finitely generated subgroups of $\mathcal{C}$-groups are like. One may hope that these are already $\mathcal{C}$-groups as is the case for the classes of free groups, surface groups and 3-manifold groups. One can also ask if groups in the class are coherent, i.e. every finitely generated subgroup is finitely presented. For the class of Artin groups, coherency fails \cite{bestvina_morse_1997} and the subgroups form an interesting class \cite{wise_structure_2011}.

A very well studied class of interest in geometric group theory is the class of hyperbolic groups. For special subclasses of hyperbolic groups we obtain no new information. However, in \cite{rips_subgroups_1982} it is shown that there are finitely generated subgroups of hyperbolic groups which are not finitely presented and therefore not hyperbolic. One could then ask if finitely presented subgroups behave in a nicer way although this is shown to not be the case \cite{brady_branched_1999,kropholler_finitelypresented_2018,lodha_hyperbolic_2017other}.

Finite generation was not enough to guarantee that subgroups were finitely presented. One could look for a stronger property between finite generation and finite presentability. One such property is being of type $FP_2$. For simplicity, in this paper we shall only consider type $FP_2$ over $\ZZ$. 

\begin{definition}
	A group $G$ is of {\em type $FP_2$} if there is a partial resolution $$P_2\to P_1\to P_0\to \ZZ,$$  where each $P_i$ is a finitely generated projective $\ZZ G$ module. 
\end{definition}

One can think of this as a homological version of finite presentability. Indeed, it is equivalent to finite generation of the relation module. Until \cite{bestvina_morse_1997} it was unknown whether being of type $FP_2$ was equivalent to finite presentation. It was shown that in subgroups of RAAGs one could obtain groups which were of type $FP_2$ but not finitely presented. 

One can attempt to find such groups inside hyperbolic groups. A result showing this may not be possible is the following 

\begin{thm}\cite{gersten_subgroups_1996}
	Let $G$ be a hyperbolic group of cohomological dimension 2. If $H<G$ is of type $FP_2$, then $H$ is hyperbolic. In particular, $H$ is finitely presented. 
\end{thm}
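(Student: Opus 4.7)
The goal is to show that $H$ has a linear Dehn function, from which hyperbolicity (and in particular finite presentability) follows by Gromov's theorem. The strategy is to transfer a linear homological isoperimetric inequality from $G$ (supplied by hyperbolicity) down to $H$ (using type $FP_2$), with the cohomological dimension $2$ hypothesis on $G$ ensuring that no higher-dimensional obstructions interfere either with the transfer or with the passage from homological to combinatorial fillings.

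First I would translate hyperbolicity of $G$ into homological language. Since $G$ is finitely presented and hyperbolic, its Cayley $2$-complex satisfies a linear combinatorial isoperimetric inequality, which in particular gives a linear homological one: every $1$-cycle of length $\ell$ in the universal cover bounds a $\ZZ$-valued $2$-chain of $\ell_1$-norm at most $C\ell$. The cohomological dimension $2$ hypothesis allows me to arrange a free partial resolution of $\ZZ$ over $\ZZ G$ of length $2$ by finitely generated modules, and to geometrically realise it; in particular there are no $3$-cells that could interfere with such homological fillings.

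Next I would use $FP_2$ for $H$ to construct a finite $2$-complex $X_H$ modelling the first two stages of a free resolution of $\ZZ$ over $\ZZ H$, and compare it via the inclusion $H\hookrightarrow G$ to the $2$-complex $X_G$ for $G$. A $1$-cycle of length $\ell$ in $\widetilde{X_H}$ maps to a $1$-cycle of length $\le C'\ell$ in $\widetilde{X_G}$, which by the previous step admits a linear homological filling. I would then pull this filling back to $\widetilde{X_H}$, using $FP_2$ quantitatively: the $\ZZ H$-module of $2$-cycles in $\widetilde{X_H}$ is finitely generated, and because $\widetilde{X_G}$ is $2$-dimensional the pulled-back $2$-chain differs from a genuine $H$-equivariant filling only by a controlled combination of these finitely many generators.

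This transfer step is the main obstacle: homological fillings for $G$ are not automatically $H$-equivariant, and correcting them to fillings in $X_H$ without destroying the linear bound is precisely what requires the $FP_2$ hypothesis together with the $2$-dimensionality. Once this is achieved, the last step is to upgrade the resulting linear homological Dehn function of $H$ to a linear combinatorial Dehn function, which is possible exactly because everything takes place in dimension $2$, so that a homological filling may be replaced by a genuine van Kampen diagram without any blow-up in area. Gromov's theorem then yields that $H$ is hyperbolic, and in particular finitely presented.
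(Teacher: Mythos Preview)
The paper does not prove this theorem: it is quoted from \cite{gersten_subgroups_1996} as background, to explain why the examples constructed later must live in cohomological dimension at least $3$. There is therefore no argument in the paper to compare your proposal against.

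Your outline does follow the spirit of Gersten's original approach --- recast hyperbolicity of $G$ as a linear homological isoperimetric inequality, use $\mathrm{cd}(G)=2$ to work with a contractible $2$-complex, and use $FP_2$ for $H$ to control fillings --- but your final step, as written, is circular. You propose to ``upgrade the resulting linear homological Dehn function of $H$ to a linear combinatorial Dehn function'' by replacing homological fillings with van Kampen diagrams. A van Kampen diagram for a word in $H$ presupposes a finite presentation of $H$, which is precisely the conclusion you are after; and the complex $X_H$ you build from the bare $FP_2$ hypothesis has $H_1(\widetilde{X_H})=0$ but need not be simply connected, so null-homologous loops there need not bound disks at all. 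The coincidence of homological and combinatorial filling area is not a consequence of being $2$-dimensional; it requires asphericity, which you have for $X_G$ but not for $X_H$.

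Gersten's actual route avoids this trap: he proves, as a separate theorem, that an $FP_2$ group satisfying a linear homological isoperimetric inequality is already word-hyperbolic (hence finitely presented). That implication is the real content, and it is not a formality. Your transfer step --- pulling fillings back from $\widetilde{X_G}$ to $\widetilde{X_H}$ --- is also stated too loosely, though the underlying intuition is right: it is exactly the contractibility of the $2$-dimensional $\widetilde{X_G}$ (so that $C_2(\widetilde{X_G})$ has no $2$-cycles and fillings are unique) that makes a controlled comparison possible.
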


In this paper we show that this phenomenon is special to cohomological dimension 2. We find hyperbolic groups of cohomological dimension 3 containing subgroups which are of type $FP_2$ but not finitely presented. Namely, we prove the following. 

\begin{thmk}\label{thm:subhypfp2}
	There exists a hyperbolic group $G$ and a homomorphism $\phi\colon G\to \ZZ$ such that $\ker(\phi)$ is of type $FP_2$ but not finitely presented. 
\end{thmk}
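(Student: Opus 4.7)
My plan is to combine Bestvina--Brady discrete Morse theory for cube complexes with a branched-cover hyperbolisation in the spirit of Brady \cite{brady_branched_1999} and \cite{kropholler_finitelypresented_2018}. The crucial combinatorial input is a finite flag complex $L$ that is connected, satisfies $H_1(L;\ZZ)=0$, but has $\pi_1(L)\neq 1$; any flag triangulation of a presentation $2$-complex for a non-trivial finitely presented perfect group supplies such an $L$. The slogan is that the ``gap'' between $H_1=0$ and $\pi_1=1$ in $L$ should be converted, via Morse theory, into the gap between $FP_2$ and finite presentability for $\ker(\phi)$.

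First, I would build a non-positively curved cube complex $Y_0$ together with a combinatorial height function $Y_0\to S^1$ whose ascending and descending vertex links in $Y_0$ are both homotopy equivalent to $L$. Setting $G_0=\pi_1(Y_0)$, the character $\phi_0\colon G_0\to\ZZ$ induced by the height function has a kernel whose finiteness type is controlled by the connectivity of these links: the Bestvina--Brady Morse lemma then yields that $\ker(\phi_0)$ is of type $FP_2$ (because each link is $0$- and $1$-acyclic) but not finitely presented (because $\pi_1(L)\neq 1$).

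The problem is that $G_0$ will typically not be hyperbolic, since cube complexes of this flavour contain flats. I would therefore pass to a finite-sheeted branched cover $Y\to Y_0$ ramified along a carefully chosen locus so that: (i)~$Y$ remains non-positively curved, (ii)~the height function lifts, inducing a character $\phi\colon G=\pi_1(Y)\to\ZZ$, and (iii)~the ascending and descending links in $Y$ are obtained from $L$ by a controlled modification that still leaves them connected, with trivial $H_1$, and with non-trivial $\pi_1$. Once every isometrically embedded flat plane has been destroyed, the flat-plane theorem gives hyperbolicity of $G$, and the Bestvina--Brady Morse lemma applied to the lifted height function on the universal cover $\widetilde Y$ transfers the link-level finiteness statement to $\ker(\phi)$, completing the proof.

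The main obstacle is step three: the branching must kill all flats while preserving the precise homotopical signature of the links. A branching that is too aggressive will raise the connectivity of the links to simple connectivity and thereby make $\ker(\phi)$ finitely presented, destroying the example; one that is too timid will leave a flat and lose hyperbolicity. Navigating this trade-off by a judicious choice of local model for $L$ and of branching locus, and then rigorously re-identifying the ascending and descending links in the cover, is where I expect essentially all of the technical content of the proof to live.
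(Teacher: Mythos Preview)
Your overall strategy is the paper's: build a non-positively curved cube complex with a Morse function whose critical up/down links carry a finite flag complex with $H_1=0$ but $\pi_1\neq 1$, then take a Brady-style branched cover to kill flats. You have also correctly located the crux in your step~(iii).

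The gap is in how you frame that step. You propose that after branching the ascending and descending links will be ``obtained from $L$ by a controlled modification that still leaves them connected, with trivial $H_1$, and with non-trivial $\pi_1$'', and you worry about balancing too-aggressive against too-timid branching. The paper sidesteps this trade-off entirely: it arranges the base complex so that the vertices whose ascending/descending link is the interesting complex $S(L')$ lie \emph{completely off} the branching locus, and hence those links are literally unchanged by the branched cover. All remaining vertices already have simply connected up/down links before branching, and the cover is chosen (via the permutation labelling and Lemma~\ref{lem:int4cycles}) so that those links stay simply connected afterwards. Concretely this separation is engineered by working with tripartite flag complexes $\Gamma_A=V_4\ast V_4\ast V_4$ and $\Gamma_B=S(S(L'))$ and the cube complex $\xcplx$: the vertices of type $(a_1,a_2,a_3)$ carry the non-simply-connected link and are disjoint from the branching locus $Y$, which is supported on the mixed-type vertices. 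Without this disjointness idea you genuinely are stuck in the dichotomy you describe, and there is no evident way through it.

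One further point: deducing ``not finitely presented'' from $\pi_1(\text{up/down link})\neq 1$ is not a direct consequence of the Morse lemma. The paper uses a retraction argument (Lemma~\ref{lem:retractlinks}): the full link $S(S(L'))$ retracts onto the ascending link $S(L')$, which forces level sets to surject onto a non-trivial $\pi_1$. Your sketch should flag that an argument of this kind is required, since in the branched-cover setting one cannot simply quote the Bestvina--Brady RAAG result.
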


It should be noted that the subgroups constructed here are not of type $FP_3$. We leave the following as an open question. 

\begin{question}
	Let $n>2$ be an integer or $\infty$. Is there a subgroup of a hyperbolic group which is of type $FP_{n}$ but is not finitely presented?
\end{question}

One could attempt this problem in reverse, i.e. one could look for groups which have interesting properties and attempt to embed them in hyperbolic groups. To do this one would need to start with a full list of obstructions to embedding in a hyperbolic group. For instance, subgroups of hyperbolic groups cannot contain Baumslag-Solitar groups or infinite torsion groups. 

\begin{definition}
	Let $m, n$ be non-zero integers. The {\em Baumslag-Solitar group} is defined by the presentation $BS(m, n) = \langle x, y\mid y^{-1}x^my = x^n\rangle.$
\end{definition}

We see that $BS(1, 1)$ is isomorphic to $\ZZ^2$. 

Using techniques from \cite{leary_uncountably_2015other} we construct uncountably many groups which do not contain $BS(m, n)$ for any $m, n$.  

\begin{thmk}
	There are uncountably many groups which are of type $FP_2$ and do not contain any subgroups isomorphic to $BS(m, n)$ or infinite torsion groups. 
\end{thmk}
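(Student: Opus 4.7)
The plan is to combine the cube-complex construction underlying Theorem A with the branched-cover machinery of Leary \cite{leary_uncountably_2015other}, which produces uncountably many groups of type $FP$ by varying a discrete parameter in a non-positively curved cube complex. I would begin with the non-positively curved cube complex $X$ constructed in the proof of Theorem A, whose universal cover $\widetilde{X}$ is Gromov-hyperbolic and which carries a circle-valued Morse function whose ascending and descending links are connected but not simply connected. Leary's procedure then parameterizes modifications of $X$ by subsets $S$ of a countable collection of orbits of codimension-$2$ subcomplexes in $\widetilde{X}$; for each $S$ one forms a branched cover $Y_S$ of $\widetilde{X}$ on which a group $G_S$ acts, together with an induced homomorphism $\phi_S\colon G_S\to\ZZ$. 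The candidate family is $\{H_S := \ker(\phi_S)\}_S$.

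The $FP_2$ property and the uncountability should both follow from Leary's arguments with only cosmetic modifications. Since the branching is supported on subcomplexes of codimension at least two, each vertex in the quotient of $Y_S$ has the same link as the corresponding vertex of $X$, so the ascending and descending links retain the connectivity properties established for Theorem A. Bestvina--Brady Morse theory (via Brown's criterion) then guarantees that each $H_S$ is of type $FP_2$ but not finitely presented. Uncountability is obtained as in \cite{leary_uncountably_2015other} by extracting an invariant from local cohomology data or from the set of primes appearing as torsion in a suitable $H_*(H_S;\ZZ)$, which depends genuinely on $S$ and realizes each isomorphism class for at most countably many choices of $S$.

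The content not already in Leary is the absence of Baumslag--Solitar subgroups, and here I would use the hyperbolicity of $\widetilde{X}$. The branched cover $Y_S$ inherits its CAT($0$) structure directly, since the vertex links are unchanged. Any flat plane in $Y_S$ would meet the codimension-$2$ branching locus in a set of vanishing measure and could, by a developing-map argument using uniqueness of CAT($0$) geodesic extension, be pushed forward to a flat plane in $\widetilde{X}$, contradicting hyperbolicity. Therefore $Y_S$ has no flat planes. Combined with the properness of the $G_S$-action by semi-simple isometries on $Y_S$, this excludes $\ZZ^2$ (hence $BS(m,n)$ with $|m|=|n|$, all of which contain $\ZZ^2$) by the flat plane theorem, and excludes $BS(m,n)$ with $|m|\neq|n|$ by Bridson--Haefliger's theorem on proper CAT($0$) actions. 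As $H_S\leq G_S$, the conclusion descends.

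The main obstacle I expect is the last step: verifying that branching over the codimension-$2$ strata cannot create flat planes in $Y_S$ that were not present in $\widetilde{X}$. Naively, a flat plane in $Y_S$ could wind around the singular locus and fail to lift. Making the developing-map argument rigorous requires a careful local analysis near the branching strata, exploiting the cubical structure to show that a putative flat plane must either meet the singular locus in positive-dimensional pieces (ruled out by the local geometry of the cone points introduced by branching) or avoid it altogether, in which case it lifts to $\widetilde{X}$ and yields the desired contradiction with hyperbolicity.
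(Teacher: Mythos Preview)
Your overall strategy matches the paper's: vary a Leary-style parameter in the complex underlying Theorem~A, use Morse theory for $FP_2$, and exclude Baumslag--Solitar subgroups via the CAT(0) geometry together with the Bridson--Haefliger result for $|m|\neq |n|$. Two points deserve correction.

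First, the claim that vertex links are unchanged after branching is false. In the Leary construction one removes vertices, passes to a cover, and completes; the completed vertices acquire links that are \emph{covers} of the originals. In the paper the new ascending and descending links at these vertices are $\widetilde{S(L')}$ rather than $S(L')$. This does not break the $FP_2$ argument, since $\widetilde{S(L')}$ is simply connected and hence has trivial $H_1$, but your stated reason is wrong and the reasoning needs adjustment.

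Second, and more importantly, the paper sidesteps your ``main obstacle'' entirely. Rather than developing a putative flat plane in $Y_S$ down to $\widetilde{X}$---which, as you correctly anticipate, could wind around the branch locus and fail to give an embedded plane---the paper observes that the hyperbolicity proof for the original space was already a purely \emph{local} argument: any flat plane is forced to have a transverse intersection with the branching locus $\overline{Y}$ of the original branched cover (Theorem~\ref{thm:transinter}), and the chosen cover forbids the $4$-cycles in links that such an intersection would create (Lemma~\ref{lem:notransinter}). The spaces $\widetilde{X(Z)}$ on which the groups $H(Z)$ act are still branched covers of $\xcplx$ and retain the same local structure along $\overline{Y}$, so that argument applies verbatim. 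No developing map is needed, and the delicate local analysis you outline can be avoided.

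Finally, the paper's uncountability invariant is more concrete than torsion primes or local cohomology: one fixes the generating tuple $S$ and an infinite set $T$ of words (one for each vertex of the relevant type), and shows via a CAT(0) retraction onto a single vertex link that the set of words in $T$ vanishing in $H(Z)$ is exactly $T_Z$. Distinct subsets $Z$ then yield distinct invariants, and since any fixed countable group realises only countably many values of this invariant, uncountably many isomorphism types appear.
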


These groups are created by using a specific group $H$ constructed for the proof of Theorem \ref{thm:subhypfp2}. This group is finitely generated but infinitely presented and has a presentation of the form $\langle S\mid U\rangle$, where $S$ is finite and $U$ is infinite. To each $Z\subseteq U$ we associate the group $H(Z) = \langle S\mid Z\rangle$. We show that for each $Z'\subseteq U$ there are only countably many groups $H(Z)$ which are isomorphic to $H(Z')$. We show that among these isomorphism classes uncountably many are of type $FP_2$. 

Only countably many of the above groups can be embedded into finitely presented groups, hence only countably many can be embedded into hyperbolic groups. However, these groups contain none of the above mentioned obstructions. 

\begin{question}
	Which of the groups $H(Z)$ embed into hyperbolic groups?
\end{question} 

Acknowledgments: I thank Noel Brady, Max Forester, Ignat Soroko and an anonymous referee for helpful comments. 

\section{Preliminaries}

\subsection{CAT(0) Cube Complexes}\label{sec:flymaps}

Here we discuss the cube complexes we are interested in. For full details on cube complexes see \cite{sageev_cat0_2014}.

We use the construction from \cite{kropholler_new_2018}. In \cite{kropholler_new_2018}, cube complexes are constructed using two flag complexes $\Gamma_A$ and $\Gamma_B$ with $n$-partite structures.

\begin{definition}
	An {\em $n$-partite struture} on a flag complex $L$ is a partition of the vertices of $L$ into sets $V_1, \dots, V_n$ such that the natural map $V(L)\to V_1\ast \dots\ast V_n$ extends to an embedding of $L\to V_1\ast \dots\ast V_n$. 
\end{definition}

A flag complex with a $n$-partite structure has dimension at most $n-1$. 
For $n=2$ we recover the definition of a bipartite graph. In this paper, we will focus on the case $n=3$ when the structure is {\em tripartite.} In this case, we give an account of the required material from \cite{kropholler_new_2018}.

Let $\Gamma_A\subseteq A_1\ast A_2\ast A_3$ and $\Gamma_B\subseteq B_1\ast B_2\ast B_3$ be flag complexes with tripartite structures. Set $K$ equal to the non-positively curved cube complex $\prod_{i=1}^3 A_i*B_i$. Given a vertex $v = (v_1, v_2, v_3)\in K$ we can assign two sets, $\Delta_A = \{v_i\in A_i\}$ and $\Delta_B = \{v_i\in B_i\}$. Let $$V = \{v\in K \mid \Delta_A \mbox{ is a simplex of }\Gamma_A, \Delta_B \mbox{ is a simplex of }\Gamma_B\}.$$

We define $\xcplx$ to be the maximal subcomplex of $K$ with vertex set $V$. 

The following can be found in \cite{kropholler_new_2018}. 

\begin{lem}\label{lem:links.in.xcplx}
	The link of the vertex $(v_1, v_2, v_3)$ in $\xcplx$ is the join of the links of $\Delta_A$ and $\Delta_B$ i.e.
	\[
	\lk\bigg((v_1, v_2, v_3),\xcplx\bigg)=\lk(\Delta_A,\Gamma_A)*\lk(\Delta_B,\Gamma_B).
	\]
\end{lem}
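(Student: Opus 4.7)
The plan is to compare cubes of $\xcplx$ containing $v=(v_1,v_2,v_3)$ with cubes of the ambient product $K$ containing $v$, and then translate the side condition defining $\xcplx$ into a link-of-link condition. First I would compute $\lk(v,K)$. Each factor $A_i\ast B_i$ is the simplicial join of two discrete vertex sets, i.e.\ a complete bipartite graph, so $\lk(v_i,A_i\ast B_i)$ is the opposite part: it equals $B_i$ if $v_i\in A_i$ and $A_i$ if $v_i\in B_i$. Since vertex links in cubical products are joins of factor links, $\lk(v,K)$ is a join of three discrete sets, and each of its simplices decomposes uniquely as $\sigma=W_A\sqcup W_B$, where $W_A$ consists of the vertices lying in the various $A_i$ (contributed by coordinates where $v_i\in B_i$) and $W_B$ of the vertices lying in the various $B_i$.

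Next I would translate ``the cube $C_\sigma$ of $K$ determined by $\sigma$ is contained in $\xcplx$'' into a condition on $(W_A,W_B)$. The vertices of $C_\sigma$ are indexed by subsets $J$ of the flipped-coordinate set; flipping the coordinates in $J$ produces a vertex whose $A$-label is a subset of $\Delta_A\cup W_A$ (one deletes some $v_i\in\Delta_A$ and inserts some $w_i\in W_A$) and whose $B$-label is a subset of $\Delta_B\cup W_B$. By choosing $J$ to flip only those coordinates with $v_i\in B_i$ (respectively only those with $v_i\in A_i$) one sees that the extremal labels $\Delta_A\cup W_A$ and $\Delta_B\cup W_B$ are actually attained. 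Hence requiring every vertex of $C_\sigma$ to lie in $V$ is equivalent to requiring that $\Delta_A\cup W_A$ is a simplex of $\Gamma_A$ and $\Delta_B\cup W_B$ is a simplex of $\Gamma_B$. Because the tripartite structure forces $W_A\cap\Delta_A=\emptyset$ and $W_B\cap\Delta_B=\emptyset$, these conditions are precisely $W_A\in\lk(\Delta_A,\Gamma_A)$ and $W_B\in\lk(\Delta_B,\Gamma_B)$.

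Since the simplices of a join are exactly the disjoint unions of a simplex from each side, the pair of conditions on $W_A,W_B$ cuts out precisely $\lk(\Delta_A,\Gamma_A)\ast\lk(\Delta_B,\Gamma_B)$ inside $\lk(v,K)$, yielding the claimed identification. I expect the main obstacle to be purely combinatorial bookkeeping: one must verify that the ``every vertex of $C_\sigma$ lies in $V$'' condition genuinely collapses to the two extremal checks, and that the $A$- and $B$-sides of the argument do not interact. Both of these hinge on the tripartite hypothesis, which ensures that the inserted vertices live in parts disjoint from the retained ones, so the union of a simplex with a linked simplex is automatically a simplex rather than merely a set of vertices to be checked face by face.
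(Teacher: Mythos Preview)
Your argument is correct. The paper itself does not prove this lemma; it simply quotes it from \cite{kropholler_new_2018}, so there is no in-paper proof to compare against. Your approach---identifying $\lk(v,K)$ as a join of three discrete sets, parametrising cubes through $v$ by simplices $\sigma=W_A\sqcup W_B$, and reducing the full-subcomplex condition to the two extremal checks $\Delta_A\cup W_A\in\Gamma_A$ and $\Delta_B\cup W_B\in\Gamma_B$---is exactly the natural one and is the argument one finds in the cited source. The only point worth making explicit is that the collapse from ``every vertex of $C_\sigma$ lies in $V$'' to the two extremal conditions uses that $\Gamma_A$ and $\Gamma_B$ are simplicial complexes (closed under faces), so once the maximal $A$- and $B$-labels are simplices, all sublabels are automatically simplices; you allude to this but it could be stated once rather than left implicit.
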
 

Since links of simplices in a flag complex are flag complexes and the join of two flag complexes is a flag complex. Lemma \ref{lem:links.in.xcplx} shows that the link of a vertex in $\xcplx$ is a flag complex and we obtain the following. 

\begin{cor}\label{cor:flag.implies.cat0.CLCC}
	$\xcplx$ is a non-positively curved cube complex.
\end{cor}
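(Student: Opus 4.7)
The plan is to invoke Gromov's link condition: a cube complex is non-positively curved if and only if the link of every vertex is a flag simplicial complex. So the task reduces to verifying that each vertex link in $\xcplx$ is flag.

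First, I would use Lemma \ref{lem:links.in.xcplx} to identify the link of an arbitrary vertex $(v_1, v_2, v_3)$ in $\xcplx$ as the join
\[
\lk(\Delta_A, \Gamma_A) \ast \lk(\Delta_B, \Gamma_B),
\]
where $\Delta_A$ and $\Delta_B$ are the simplices of $\Gamma_A, \Gamma_B$ associated to the vertex. This is already done for us; the content of the corollary is just the packaging.

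Next, I would appeal to two standard and essentially formal facts about flag complexes. First, the link of any simplex in a flag complex is again a flag complex: if $\sigma_0, \dots, \sigma_k$ are vertices of $\lk(\Delta, \Gamma)$ that are pairwise joined by edges, then together with the vertices of $\Delta$ they form a pairwise-adjacent set in $\Gamma$, which by the flag condition spans a simplex, and this simplex restricts to a simplex of $\lk(\Delta, \Gamma)$ on $\{\sigma_0, \dots, \sigma_k\}$. Applied to $\Delta_A \subset \Gamma_A$ and $\Delta_B \subset \Gamma_B$, this shows both factors of the join are flag. Second, the join $L_1 \ast L_2$ of two flag complexes is flag: a set of vertices pairwise joined by edges splits into its $L_1$-part and its $L_2$-part, each of which spans a simplex by flagness of the factors, and the simplex in the join is then the join of these two simplices.

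Combining these, the link of every vertex of $\xcplx$ is a flag complex, and Gromov's criterion yields that $\xcplx$ is non-positively curved. There is no real obstacle here; the only thing one might want to be careful about is degenerate cases where $\Delta_A$ or $\Delta_B$ is empty (i.e., the vertex has all coordinates in the $B_i$'s or all in the $A_i$'s), but in those cases the corresponding ``link'' factor is just $\Gamma_A$ or $\Gamma_B$ itself, which is flag by hypothesis, and the join with an empty complex is the other factor, so the argument goes through unchanged.
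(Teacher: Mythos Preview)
Your proposal is correct and is essentially identical to the paper's own argument: the paper states (in the sentence immediately preceding the corollary) that links of simplices in a flag complex are flag and joins of flag complexes are flag, and then invokes Lemma \ref{lem:links.in.xcplx} to conclude via Gromov's link condition. You have simply spelled out the proofs of those two standard facts and noted the degenerate cases, which the paper leaves implicit.
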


These complexes form the base of our construction. We take branched covers to eliminate any isometrically embedded flat planes. To show that there are no flat planes in the universal cover, we use the notion of fly maps. The details are included in the appendix. 

We require the following key result from the appendix. 

\begin{customcor}{\ref{cor:flylinks}}
	A fly map $f^x$ induces for every cube $c\in \CN\big(i(\EE^k)\big)$ an embedding $\lk\big(c, \CN\big(i(\EE^2)\big)\big)\to S^0\ast S^0\ast S^0$.  
\end{customcor}

Fly maps allow us to better understand flat planes in CAT(0) cube complexes, which by Bridson's flat plane theorem are the only obstruction to hyperbolicity: 

\begin{thm}\cite{bridson_existence_1995}\label{thm:flatplane}
	A compact CAT(0) space $X$ has hyperbolic fundamental group if and only if there are no isometric embeddings of $\EE^2$ to $\tilde{X}$. 
\end{thm}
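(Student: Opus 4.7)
The plan is to prove the two implications separately, with the forward direction being the easier one. For the forward direction, assume $\pi_1(X)$ is hyperbolic. Since $X$ is compact, the Milnor--\v Svarc lemma gives that $\tilde X$ (with the induced length metric) is quasi-isometric to $\pi_1(X)$, so $\tilde X$ is itself a Gromov-hyperbolic metric space: there is $\delta\geq 0$ such that all geodesic triangles in $\tilde X$ are $\delta$-thin. An isometric embedding $i\colon \EE^2\hookrightarrow \tilde X$ would transport equilateral Euclidean triangles of arbitrarily large side length into $\tilde X$, and such triangles have inradius growing linearly in the side length. This contradicts $\delta$-thinness, so no such $i$ exists.

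For the reverse direction, I would argue by contrapositive: assume $\pi_1(X)$ is not hyperbolic and build an isometric flat in $\tilde X$. Non-hyperbolicity of $\pi_1(X)$, combined with the cocompact action on $\tilde X$, gives that $\tilde X$ is not $\delta$-hyperbolic for any $\delta$. Hence for every $n$ there is a geodesic triangle $\Delta_n$ in $\tilde X$ whose inscribed radius exceeds $n$. Using cocompactness, translate each $\Delta_n$ by an element of $\pi_1(X)$ so that its incenter lies in a fixed compact fundamental domain $K\subset\tilde X$.

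The key technical step is then an Arzel\`a--Ascoli argument. Because $\tilde X$ is a proper CAT(0) space, the three geodesic sides of $\Delta_n$, reparametrized to emanate from the translated incenter in $K$, subconverge on compact sets to three geodesic rays $\gamma_1,\gamma_2,\gamma_3$ based at a common point of $K$. The fatness condition forces these rays to be mutually ``parallel'' in the strong sense that for each pair the Gromov product is unbounded along the rays only up to a uniform constant, so each pair of rays bounds a flat half-strip by the CAT(0) flat strip theorem. A second diagonal Arzel\`a--Ascoli extraction produces arbitrarily large flat disks in $\tilde X$ which are glued along these strips, and their limit yields an isometric embedding $\EE^2\hookrightarrow\tilde X$.

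The main obstacle is the last paragraph: converting the one-dimensional statement ``three geodesic rays fellow-travel in a fat way'' into the two-dimensional conclusion ``there is an isometric flat plane.'' Bounded triangles of inradius $n$ give only a flat \emph{disk} of radius comparable to $n$; to promote these to an honest Euclidean plane one must exploit local compactness of $\tilde X$ and the rigidity of CAT(0) geometry (specifically, that the convex hull of two parallel CAT(0) geodesic lines is an isometric flat strip) to pass to a limit flat without losing the isometry. Handling the basepoint carefully via the group action is what makes the compactness argument go through.
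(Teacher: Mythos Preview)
The paper does not prove this statement; it is quoted from \cite{bridson_existence_1995} as a black box and used only as input to the later hyperbolicity arguments. So there is no ``paper's own proof'' to compare against, and your task was really to reproduce Bridson's argument.

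Your forward direction is correct and standard. For the reverse direction you have the right large-scale architecture (non-hyperbolicity $\Rightarrow$ arbitrarily fat triangles $\Rightarrow$ translate by cocompactness $\Rightarrow$ Arzel\`a--Ascoli limit), but the middle step is off. Passing to three limiting geodesic \emph{rays} from the incenter and then invoking the flat strip theorem does not work as written: the flat strip theorem applies to a pair of bi-infinite parallel geodesic \emph{lines}, not to rays issuing from a common point, and nothing you have established forces any two of your rays to be asymptotic. The sentence about the Gromov product being ``unbounded only up to a uniform constant'' does not pin down a usable condition.

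The correct key lemma, which you almost state in your final paragraph, is two-dimensional from the outset: in a CAT(0) space, a geodesic triangle that fails $\delta$-thinness by a large margin contains an isometrically embedded flat Euclidean disk of comparable radius (this is the ``flat triangle'' or Sandwich Lemma argument, using CAT(0) comparison to show the triangle coincides with its Euclidean comparison triangle on that region). Once you have honest isometric embeddings $D_n\colon B(0,n)\subset\EE^2\hookrightarrow\tilde X$ with $D_n(0)$ in a fixed compact set, Arzel\`a--Ascoli on a proper space gives the flat plane directly, with no detour through rays or strips. Replacing your third paragraph with this step would make the argument go through.
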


\subsection{Branched Covers}

To obtain hyperbolic cube complexes, we begin with a cube complex containing flat planes and take branched covers. In the setting of CAT(0) cube complexes this was achieved by Brady \cite{brady_branched_1999} and used to create subgroups of hyperbolic groups with interesting finiteness properties. It has also been used in \cite{kropholler_finitelypresented_2018,kropholler_almost_2018} to create other groups with interesting finiteness properties. 

\begin{definition}\label{def:branchinglocus}
	A {\em branching locus} $L$ in a non-positively curved cube complex $K$ is a subcomplex satisfying the following two conditions. 
	
	\begin{itemize}
		\item $L$ is a locally isometrically embedded subcomplex of $K$.
		\item $\lk(c, K)\smallsetminus\lk(c, L)$ is non-empty and connected for all cubes $c\in L$. 
	\end{itemize}
\end{definition}

The first condition is required to prove that non-positive curvature is preserved when taking branched covers. The second is a reformulation of the classical requirement that the branching locus has codimension 2 in the theory of branched covers of manifolds; it ensures that the trivial branched covering of $K$ is $K$. 

\begin{definition}
	A {\em branched cover} $\widehat{K}$ of a non-positively curved cube complex $K$ over the branching locus $L$ is the result of the following process.
	\begin{enumerate}
		\item Take a finite covering $\overline{K\smallsetminus L}$ of $K\smallsetminus L$.
		\item Lift the piecewise Euclidean metric locally and consider the induced path metric on $\overline{K\smallsetminus L}$.
		\item Take the metric completion $\widehat{K}$ of $\overline{K\smallsetminus L}$.
	\end{enumerate}
\end{definition}

We require two key results from \cite{brady_branched_1999}.

\begin{lem}[Brady \cite{brady_branched_1999}, Lemma 5.3]
	There is a natural surjection $\widehat{K}\to K$ and $\widehat{K}$ is a piecewise Euclidean cube complex. 
\end{lem}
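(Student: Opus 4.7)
The plan is to prove the two assertions separately, following the three-step construction of $\widehat{K}$.

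For the surjection, I would observe that the composition $\overline{K\smallsetminus L}\to K\smallsetminus L\hookrightarrow K$ is $1$-Lipschitz (in fact a local isometry) once $\overline{K\smallsetminus L}$ is equipped with the lifted path metric, because that path metric is built from the restriction of the piecewise Euclidean metric on $K$. Since $K$ is complete, every Cauchy sequence in $\overline{K\smallsetminus L}$ maps to a convergent sequence in $K$ with a limit depending only on the Cauchy class, giving a well-defined extension $\widehat{K}\to K$. Surjectivity onto $K\smallsetminus L$ is clear; surjectivity onto $L$ follows because non-emptiness of $\lk(c,K)\smallsetminus \lk(c,L)$ for every cube $c\subset L$ means every point of $L$ is approached by a path from $K\smallsetminus L$, which lifts to a Cauchy sequence in $\overline{K\smallsetminus L}$ and hence represents a preimage point.

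For the cube complex structure, I would work cube by cube in $K$. If an open cube $c$ of $K$ is disjoint from $L$, then its preimage in $\widehat{K}$ equals its preimage in $\overline{K\smallsetminus L}$, which is a disjoint union of isometric copies of $c$ from the covering. For a cube $c\subset L$, the local isometric embedding of $L$ into $K$ lets me identify a neighborhood of $c$ in $K$ with $c\times\operatorname{cone}(\lk(c,K))$, with $L$ corresponding to $c\times\operatorname{cone}(\lk(c,L))$. The covering $\overline{K\smallsetminus L}$ then restricts over this neighborhood to a covering of $c\times\bigl(\operatorname{cone}(\lk(c,K))\smallsetminus\operatorname{cone}(\lk(c,L))\bigr)$, which by connectedness of $\lk(c,K)\smallsetminus\lk(c,L)$ is classified by a cover of that link complement. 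Completing in the path metric recovers the cone point in each sheet, producing finitely many local pieces each of the form $c\times\operatorname{cone}(\widetilde{M})$ for a suitable cover $\widetilde{M}\to\lk(c,K)$ branched over $\lk(c,L)$. Each new cube inherits its Euclidean metric from $K$, and the local pieces assemble into a global cube complex structure compatible with $\widehat{K}\to K$.

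The main obstacle is controlling the completion step. One must verify that the path-metric completion does not produce pathological identifications—for example, several distinct branch points over a single point of $L$, or a failure of Hausdorffness in the completion. The two conditions in the definition of branching locus are designed precisely to rule this out: local isometric embedding gives the product neighborhood structure that makes the completion tractable, and connectedness of $\lk(c,K)\smallsetminus\lk(c,L)$ forces a well-defined monodromy for the local cover, so that sheets glue consistently above each cube of $L$. Once this local model is established, piecewise Euclideanness of $\widehat{K}$ and naturality of the surjection follow immediately from the construction.
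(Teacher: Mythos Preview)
The paper does not prove this lemma; it is quoted directly from Brady \cite{brady_branched_1999} as Lemma~5.3, and the paper simply relies on that citation. So there is no in-paper proof to compare against.

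Your sketch is essentially the standard argument and is correct in outline: extend the $1$-Lipschitz covering map to the completion to get the surjection, and analyze the completion locally via the product decomposition $c\times\operatorname{cone}(\lk(c,K))$ near a cube $c\subset L$ to recover a cube structure. One small misattribution: you say that connectedness of $\lk(c,K)\smallsetminus\lk(c,L)$ is what prevents ``several distinct branch points over a single point of $L$'' and makes the sheets glue consistently. In fact, even without connectedness the completion would still be a piecewise Euclidean cube complex---you would simply get one cone point for each connected component of the preimage of $\lk(c,K)\smallsetminus\lk(c,L)$, and multiple branch points over a single point of $L$ is not pathological for the cube structure. As the paper itself notes just before the definition of branched cover, the purpose of the connectedness hypothesis is rather to ensure that the \emph{trivial} branched cover of $K$ is $K$ (so that the construction generalizes the classical codimension-$2$ manifold picture). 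The non-emptiness hypothesis is what you actually need for surjectivity onto $L$, and the local isometric embedding hypothesis is what gives the product neighborhood you use. With that correction your argument goes through.
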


\begin{lem}[Brady \cite{brady_branched_1999}, Lemma 5.5]\label{npcbranch}
	If $L$ is a finite graph, then $\widehat{K}$ is non-positively curved. 
\end{lem}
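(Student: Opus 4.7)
The plan is to verify non-positive curvature of $\widehat{K}$ via Gromov's link condition: a piecewise Euclidean cube complex is non-positively curved if and only if the link of every vertex is a flag simplicial complex. So I would classify the vertices of $\widehat{K}$ according to where they sit relative to the branching locus and then examine their links.

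Vertices of $\widehat{K}$ lying over $K \smallsetminus L$ are easy: locally the natural map $\widehat{K} \to K$ restricts to the covering map $\overline{K \smallsetminus L} \to K \smallsetminus L$, which is a local isometry of cube complexes, so the link of such a vertex is isomorphic to the link of its image in $K$ and is therefore flag. The interesting case is a vertex $\hat{v}$ lying over a vertex $v \in L$. Since $L$ is a graph, $\lk(v, L)$ is a discrete finite set $S = \{s_1, \dots, s_k\} \subset \lk(v,K)^{(0)}$, and by hypothesis $\lk(v,K) \smallsetminus S$ is connected. A small neighbourhood of $v$ in $K$ is the open cone $v * \lk(v,K)$, and removing $L$ from it deformation retracts onto $\lk(v,K) \smallsetminus S$. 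Thus the covering $\overline{K \smallsetminus L} \to K \smallsetminus L$ restricts, near $v$, to a covering $\widetilde{L}_v \to \lk(v,K) \smallsetminus S$; the metric completion then re-attaches one vertex $\hat{s}$ over each $s_i$ for every end of $\widetilde{L}_v$ at the puncture $s_i$. In this way $\lk(\hat{v}, \widehat{K})$ is identified with the simplicial complex obtained from such a cover by filling in each puncture with a single cone-like vertex, and the natural map $\pi \colon \lk(\hat{v}, \widehat{K}) \to \lk(v, K)$ is a covering away from $S$ under which the closed star of each branch vertex $\hat{s}$ maps isomorphically to the closed star of $\pi(\hat{s}) = s_i$ in $\lk(v,K)$.

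Granted this description, flagness of $\lk(\hat{v}, \widehat{K})$ follows from flagness of $\lk(v,K)$. Given a finite clique of vertices in $\lk(\hat{v}, \widehat{K})^{(1)}$, project it via $\pi$ to a clique in $\lk(v,K)^{(1)}$, which bounds a unique simplex $\sigma$ because $\lk(v,K)$ is flag. If $\sigma$ avoids $S$, it lifts uniquely through the honest covering map $\widetilde{L}_v \to \lk(v,K) \smallsetminus S$ to a simplex of $\lk(\hat{v}, \widehat{K})$ whose vertex set is the given clique. If $\sigma$ contains some $s_i$, the clique contains a corresponding branch vertex $\hat{s}$ above $s_i$, and since $\pi$ is an isomorphism on the closed star of $\hat{s}$, the simplex $\sigma$ lifts uniquely to a simplex at $\hat{s}$ with the required vertex set. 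Either way the clique bounds a simplex in $\lk(\hat{v},\widehat{K})$, so the link is flag.

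The main obstacle is pinning down the combinatorial description of $\lk(\hat{v}, \widehat{K})$ used above, namely that the metric completion of the covering $\widetilde{L}_v$ re-attaches exactly one branch vertex for each end over each $s_i$ and that closed stars of branch vertices map isomorphically to the corresponding closed stars downstairs. This is precisely where the hypothesis that $L$ is $1$-dimensional is essential: $\lk(v,L)$ is then a discrete set of points, so each puncture of $\lk(v,K)$ has a contractible (cone-like) neighbourhood whose cover is controlled by a single monodromy loop, and completion produces one vertex per sheet with its star faithfully covering the star of $s_i$. The first bullet in the definition of branching locus (local isometric embedding of $L$) is what keeps this local picture compatible with the piecewise Euclidean structure, while the second bullet (connectedness of $\lk(v,K) \smallsetminus \lk(v,L)$) ensures that this completion sits over $v$ as a genuine link and not some disjoint union of cones, so that the lifting argument produces simplices in a single flag complex rather than across several components.
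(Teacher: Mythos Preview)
The paper does not give its own proof of this lemma; it is quoted from Brady \cite{brady_branched_1999} as a black-box result, so there is nothing in the paper to compare against. Your approach via Gromov's link condition is the standard one and is essentially Brady's original argument: links of vertices away from the branching locus are unchanged, while at a vertex $\hat v$ over $v\in L$ the link is a branched cover of the flag complex $\lk(v,K)$ over the discrete set $S=\lk(v,L)$, and flagness is inherited by lifting simplices.

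One point you use but leave implicit deserves to be stated, since it is precisely where both hypotheses on the branching locus and the $1$-dimensionality of $L$ enter simultaneously. The local isometric embedding of $L$ forces $\lk(v,L)$ to be a \emph{full} subcomplex of $\lk(v,K)$; since $L$ is a graph this subcomplex is discrete, and fullness then says that no two points of $S$ span an edge in $\lk(v,K)$. This is what guarantees that the closed star of $s_i$ contains no other puncture, so that each sheet of the cover near $s_i$ completes to an isomorphic copy of $\st(s_i,\lk(v,K))$ --- your claim that $\pi$ restricts to an isomorphism on $\st(\hat s,\lk(\hat v,\widehat K))$. It also shows that any clique in $\lk(\hat v,\widehat K)$ contains at most one branch vertex, so your two-case lifting argument is genuinely exhaustive and the lift is unambiguous.
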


\subsection{Bestvina--Brady Morse theory}

While Bestvina--Brady Morse theory is defined in the more general setting of affine cell complexes, we shall restrict to the case of CAT(0) cube complexes. 

For the remainder of this section, let $X$ be a CAT(0) cube complex and let $G$ be a group which acts freely, cellularly, properly and cocompactly on $X$. Let $\phi\colon G\to\ZZ$ be a homomorphism and let $\ZZ$ act on $\RR$ by translations.

Let $\chi_c$ be the characteristic map of the cube $c$.

\begin{definition}
	We say that a function $f\colon X\to \RR$ is a {\em $\phi$-equivariant Morse function} if it satisfies the following 3 conditions.
	\begin{itemize}
		\item For every cube $c\subseteq X$ of dimension $n$, the map $f\chi_c\colon [0,1]^n\to\RR$ extends to an affine map $\RR^n\to\RR$ and $f\chi_c\colon [0,1]^n\to\RR$ is constant if and only if $n=0$.
		\item The image of the $0$-skeleton of $X$ is discrete in $\RR$.
		\item $f$ is $\phi$-equivariant, i.e. $f(g\cdot x) = \phi(g)\cdot f(x)$.
	\end{itemize}
\end{definition}

\begin{definition}
	For a non-empty closed subset $I\subseteq\RR$ we denote by $X_I$ the preimage of $I$. The sets $X_I$ are known as {\em level sets}. Also for any real number $t$ we simply write $X_t$ for $X_{\{t\}}$.
\end{definition}

The kernel $H$ of $\phi$ acts on the cube complex $X$ preserving each level set $X_{I}$. The topological properties of the level sets allow us to gain information about the finiteness properties of the kernel. We need to examine the topology of the level sets and how they vary as we pass to larger level sets. 

\begin{thm} [Bestvina--Brady, \cite{bestvina_morse_1997}, Lemma 2.3]
	If $I\subseteq I'\subseteq\RR$ are connected and $X_{I'}\smallsetminus X_{I}$ contains no vertices of $X$, then the inclusion $X_I\hookrightarrow X_{I'}$ is a homotopy equivalence. 
\end{thm}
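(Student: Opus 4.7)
The plan is to construct a strong deformation retraction of $X_{I'}$ onto $X_I$ built cube by cube from the affine structure of $f$. First I would reduce to a one-sided extension: writing $I = [a,b]$ and $I' = [a',b']$ with $a' \leq a \leq b \leq b'$, the hypothesis is precisely that neither $[a',a)$ nor $(b,b']$ contains any vertex value of $f$. By producing and composing two retractions, one for each side, it suffices to treat the one-sided case $I = [a,b]$, $I' = [a,b']$, with $(b,b']$ containing no vertex values of $f$.

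Fix a cube $c$ of $X$. Since $f|_c$ is affine, $c \cap X_I$ and $c \cap X_{I'}$ are convex polytopes, and the extra slab $N_c := c \cap f^{-1}([b,b'])$ is a convex polytope whose every vertex above $\{f = b\}$ comes from an edge of $c$ that straddles $(b,b']$, meaning an edge with one endpoint of $f$-value $\leq b$ and the other $> b'$. Because no vertex of $c$ has $f$-value in $(b,b']$, the level polytopes $\{f = t\} \cap c$ have a constant combinatorial type for all $t \in (b,b']$, so $N_c$ carries the structure of the mapping cylinder of a canonical continuous surjection $\pi_c \colon \{f = b'\} \cap c \to \{f = b\} \cap c$ that contracts each straddling edge to its intersection with $\{f = b\}$. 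I would define $r_c \colon c \cap X_{I'} \to c \cap X_I$ as the identity on $c \cap X_I$ together with the standard deformation retraction of this mapping cylinder onto $\{f = b\} \cap c$.

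Compatibility across cubes is then essentially automatic: since $\pi_c$ and the mapping cylinder structure are determined solely from $f|_c$ and the edge data of $c$, the restriction to any face $c' \subseteq c$ yields exactly $\pi_{c'}$ and $r_{c'}$. Hence the $r_c$ glue to a continuous retraction $r \colon X_{I'} \to X_I$, and the within-cube straight-line homotopies $(1-t)x + t\,r_c(x)$, well-defined by convexity of $c \cap X_{I'}$, glue to the required strong deformation retraction, so the inclusion is a homotopy equivalence.

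The main obstacle is making the mapping cylinder structure on $N_c$ precise and canonical when $\{f = b\} \cap c$ contains vertices of $c$: then $\pi_c$ is genuinely non-injective and $N_c$ is not a trivial product. One needs to verify that the affine interpolation between $\{f = b'\} \cap c$ and $\{f = b\} \cap c$ defined by the straddling edges extends coherently over the whole interior of $N_c$, not merely on vertices. Once this is in place, the gluing step and the conclusion that $X_I \hookrightarrow X_{I'}$ is a homotopy equivalence follow directly from the per-cube construction.
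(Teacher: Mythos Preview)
The paper does not supply its own proof of this statement; it is quoted verbatim as Lemma~2.3 of Bestvina--Brady and used as a black box. So there is nothing in this paper to compare your argument against.

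That said, your outline is essentially the standard proof and is correct in spirit: reduce to a one-sided slab, build a deformation retraction cell by cell using the affine structure, and glue. The obstacle you flag is genuine, and as written your construction does not yet dispatch it: you define $\pi_c$ only on the vertices of $\{f=b'\}\cap c$ (the intersections with straddling edges), but $\{f=b'\}\cap c$ need not be a simplex (already for a $3$-cube with $f(x,y,z)=x+y+z$ it is a hexagon), so ``extend affinely'' is not well-defined, and the mapping-cylinder structure on $N_c$ is therefore not yet specified. Moreover, natural candidates such as ``flow towards the minimum vertex of $c$'' or ``orthogonal projection in the $\nabla f$ direction'' fail the face-compatibility check you need.

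The clean way around this, and the route Bestvina--Brady take, is to build the retraction by induction on skeleta rather than trying to write down $\pi_c$ in closed form. First observe that any cell meeting $X_{I'}$ already meets $X_I$ (otherwise all its vertices would lie in $(b,b']$). On the $1$-skeleton the retraction is forced and obvious. For a $k$-cell $c$, both $c\cap X_I$ and $c\cap X_{I'}$ are nonempty convex polytopes, hence contractible; the retraction and homotopy are already defined on $\partial c\cap X_{I'}$ by induction and on $c\cap X_I$ as the identity, and these agree on the overlap. Because the target $c\cap X_I$ is contractible and $(c\cap X_{I'},\,\partial c\cap X_{I'}\cup c\cap X_I)$ has the homotopy extension property, the extension to $c\cap X_{I'}$ exists. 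This sidesteps the need for an explicit $\pi_c$ while still producing a strong deformation retraction compatible across faces.
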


If $X_{I'}\smallsetminus X_{I}$ contains vertices of $X$, then the topological properties of $X_{I'}$ can be very different from those of $X_I$. This difference is encoded in the ascending and descending links. 

\begin{definition}
	The {\em ascending link} of a vertex is 
	
	$\lk_{\uparrow}(v,X) = \bigcup \{{\rm Lk}(w,c)\mid\chi_c(w) = v$ and $w$ is a minimum of $f\chi_c\}\subseteq {\rm Lk}(v, X).$
	
	The {\em descending link} of a vertex is 
	
	${\rm Lk}_{\downarrow}(v,X) = \bigcup \{{\rm Lk}(w,c)\mid\chi_c(w) = v$ and $w$ is a maximum of $f\chi_c\}\subseteq {\rm Lk}(v, X).$
\end{definition}

\begin{thm} [Bestvina--Brady, \cite{bestvina_morse_1997}, Lemma 2.5]\label{homoeq}
	Let $f$ be a Morse function. Suppose that $I\subseteq I'\subseteq\RR$ are connected and closed with $\min I = \min I'$ (resp. $\max I = \max I')$, and assume $I'\smallsetminus I$ contains only one point $r$ of $f\big(X^{(0)}\big)$. Then $X_{I'}$ is homotopy equivalent to the space obtained from $X_I$ by coning off the descending (resp. ascending) links of $v$ for each $v\in f^{-1}(r)$. 
\end{thm}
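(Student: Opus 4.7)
The plan is to reduce to the case that $I' \setminus I$ lies entirely at the top of $I'$ and contains exactly the vertex value $r$, then perform a cube-by-cube deformation retraction of $X_{I'}$ onto an explicit subspace that is visibly $X_I$ with cones on the descending links attached.

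First, using discreteness of $f\big(X^{(0)}\big)$ together with the preceding Bestvina--Brady lemma, I would assume $\max I' = r$ and $I = [\min I', s]$ for some $s < r$ with $(s,r) \cap f\big(X^{(0)}\big) = \emptyset$. Define
\[
Z \;:=\; X_I \cup \bigcup_{v \in f^{-1}(r)} \st_\downarrow(v),
\]
where $\st_\downarrow(v)$, the \emph{closed descending star}, is the union of all closed cubes $c \ni v$ for which $v$ is a maximum of $f|_c$.

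Next I would deformation retract $X_{I'}$ onto $Z$. In each cube $c$ containing a level-$r$ vertex $v$, the Morse condition forbids constant edges, so the edges of $c$ at $v$ split into ascending and descending ones, giving a product decomposition $c \cong F_\downarrow^c \times F_\uparrow^c$ with $v$ at a corner. In suitable affine coordinates on $c$ one has $f|_c = r - \sum x_i + \sum y_j$, so $c \cap X_{I'}$ is a polytope that linearly deformation retracts onto $F_\downarrow^c \subseteq \st_\downarrow(v)$ by collapsing the $y_j$-coordinates to zero. Away from the level-$r$ vertices, $X_{I'}$ already coincides with $X_I$, so the identity suffices there. The main obstacle is globalizing: one must check these local retractions patch continuously across shared faces. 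This works because the descending/ascending split of a cube at $v$ is inherited by every face of $c$ through $v$, so the retractions agree on overlaps and extend by the identity to the rest of $X_{I'}$.

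Finally, $\st_\downarrow(v)$ is contractible to $v$ (radial scaling in each descending cube), while $\st_\downarrow(v) \cap X_I$ deformation retracts onto a small sphere around $v$ inside $\st_\downarrow(v)$, namely a realization of $\lk_\downarrow(v, X)$. Hence, up to homotopy, attaching $\st_\downarrow(v)$ to $X_I$ along $\st_\downarrow(v) \cap X_I$ is the same as coning off $\lk_\downarrow(v, X) \subset X_I$. Performing this simultaneously for every $v \in f^{-1}(r)$ identifies $Z \simeq X_{I'}$ with the space obtained from $X_I$ by coning off each descending link, as claimed. The ascending case (when the hypothesis $\min I = \min I'$ is replaced by $\max I = \max I'$) follows by applying the same argument to $-f$.
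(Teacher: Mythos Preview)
The paper does not supply its own proof of this statement; it is quoted as Lemma~2.5 of Bestvina--Brady \cite{bestvina_morse_1997} and used as a black box throughout Section~2.3. So there is nothing in the paper to compare against, and your outline is essentially the standard Bestvina--Brady argument.

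One genuine gap: the sentence ``Away from the level-$r$ vertices, $X_{I'}$ already coincides with $X_I$, so the identity suffices there'' is false as written. A cube $c$ may meet $f^{-1}\big((s,r]\big)$ without containing any vertex at height $r$; for example an edge whose endpoints have heights below $s$ and above $r$ respectively (nothing in the definition of a Morse function forces edge increments to be $\pm 1$). On such a cube $c\cap X_{I'}$ strictly contains $c\cap X_I$, so the identity does not land in $Z$. The fix is easy --- the same linear ``shrink the ascending coordinates'' retraction works in every cube meeting $f^{-1}\big((s,r]\big)$, whether or not it contains a level-$r$ vertex, and lands either in $X_I$ or in some $F_\downarrow^c\subset\st_\downarrow(v)$ --- but it does need to be said, and your compatibility check across shared faces must include these cubes too. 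A second, smaller point: the full closed descending star $\st_\downarrow(v)$ need not lie inside $X_{I'}$ (it can dip below $\min I'$), so strictly speaking you should use the germ of the descending star near $v$, which is still a cone on $\lk_\downarrow(v,X)$.
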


We can now deduce a lot about the topology of the level sets. We know how they change as we pass to larger intervals and so we have the following. 

\begin{cor}[Bestvina--Brady, \cite{bestvina_morse_1997}, Corollary 2.6]\label{cor1} Let $I,I'$ be as above.
	\begin{enumerate}
		\item If each ascending and descending link is homologically $(n-1)$-connected, then the inclusion $X_I\hookrightarrow X_{I'}$ induces an isomorphism on $H_i$ for $i\leq n-1$ and is surjective for $i=n$.
		\item If the ascending and descending links are connected, then the inclusion $X_I\hookrightarrow X_{I'}$ induces a surjection on $\pi_1$. 
		\item If the ascending and descending links are simply connected, then the inclusion $X_I\hookrightarrow X_{I'}$ induces an isomorphism on $\pi_1$.
	\end{enumerate}
\end{cor}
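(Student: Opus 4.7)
The plan is to reduce the three statements to the model description of $X_{I'}$ provided by Theorem \ref{homoeq}, and then use standard algebraic-topological machinery (the long exact sequence of a pair and van Kampen's theorem) to read off the conclusions.

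Concretely, by Theorem \ref{homoeq} we may replace $X_{I'}$ up to homotopy equivalence by the space
\[
Y = X_I \cup \bigcup_{v \in f^{-1}(r)} C L_v,
\]
where $L_v$ denotes either the descending or ascending link of $v$ (depending on whether we are extending $I$ upward or downward), and $CL_v$ is the cone on $L_v$ attached to $X_I$ along $L_v$. Because the new cones $CL_v$ only intersect $X_I$ in the $L_v$'s, I can analyze the pair $(Y, X_I)$ by excision: $H_j(Y, X_I) \cong \bigoplus_v H_j(CL_v, L_v)$. Each $CL_v$ is contractible, so the long exact sequence of the pair $(CL_v, L_v)$ yields $H_j(CL_v, L_v) \cong \tilde H_{j-1}(L_v)$.

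For part (1), the hypothesis that each $L_v$ is homologically $(n-1)$-connected means $\tilde H_i(L_v) = 0$ for $i \le n-1$, so $H_j(Y, X_I) = 0$ for $j \le n$. Plugging this into the long exact sequence
\[
\cdots \to H_j(X_I) \to H_j(Y) \to H_j(Y, X_I) \to H_{j-1}(X_I) \to \cdots
\]
immediately gives that $H_i(X_I) \to H_i(X_{I'}) \cong H_i(Y)$ is an isomorphism for $i \le n-1$ and a surjection for $i = n$, as required.

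For parts (2) and (3), I would apply van Kampen's theorem to a thickening of the decomposition $Y = X_I \cup \bigsqcup_v CL_v$, whose intersection deformation retracts to $\bigsqcup_v L_v$. Since each $CL_v$ is contractible, van Kampen presents $\pi_1(Y)$ as the quotient of $\pi_1(X_I)$ by the normal closure of the images of the $\pi_1(L_v) \to \pi_1(X_I)$ (with basepoints chosen inside each $L_v$, using connectivity to transport to a common basepoint). If every $L_v$ is path-connected, this presentation is a quotient of $\pi_1(X_I)$, so the map $\pi_1(X_I) \to \pi_1(X_{I'})$ is surjective; if in addition every $L_v$ is simply connected, the normal subgroup being quotiented out is trivial and the map is an isomorphism.

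The only real subtlety I anticipate is bookkeeping with basepoints and with the (possibly infinite) family $\{v \in f^{-1}(r)\}$: one must ensure that the thickened cones are disjoint open sets and that each $L_v$ is nonempty and path-connected before invoking van Kampen, but both issues are routine since a Morse function has discrete critical values and each $L_v$ sits inside the connected space $X_I$.
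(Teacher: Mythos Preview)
The paper does not give its own proof of this corollary: it is simply quoted from \cite{bestvina_morse_1997} as background. Your argument is correct and is exactly the standard deduction from Theorem~\ref{homoeq} that one finds in \cite{bestvina_morse_1997}: model $X_{I'}$ as $X_I$ with cones on the relevant ascending/descending links attached, then read off part~(1) from excision and the long exact sequence of the pair, and parts~(2)--(3) from van Kampen.

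Two small remarks. First, your claim that ``each $L_v$ sits inside the connected space $X_I$'' is not quite right in general---$X_I$ need not be connected---but this does no harm: the cones only affect the component they are attached to, so the $\pi_1$ statements hold at any basepoint. Second, note that the hypothesis ``$I,I'$ as above'' refers to Theorem~\ref{homoeq}, where $I'\smallsetminus I$ contains a single value of $f(X^{(0)})$; for general nested intervals one iterates, which is how the corollary is used later in the paper.
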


Knowing that the direct limit of this system is a contractible space allows us to compute the finiteness properties of the kernel of $\phi$. 

\begin{theorem}[Bestvina--Brady, \cite{bestvina_morse_1997}, Theorem 4.1]\label{bbmorse}
	Let $f\colon X\to \RR$ be a $\phi$-equivariant Morse function and let $H=\ker(\phi)$. If all ascending and descending links are simply connected, then $H$ is finitely presented (i.e. is of type $F_2$).
\end{theorem}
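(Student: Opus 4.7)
The plan is to realise $H$ as the fundamental group of a compact simply connected complex on which it acts freely, and then read off a finite presentation from the resulting finite CW quotient. Since $f$ is $\phi$-equivariant and $H=\ker(\phi)$, the subgroup $H$ preserves each level set $X_I$, so I would work with a single bounded level set $X_{[-n,n]}$ for $n$ sufficiently large.

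First I would check that $H$ acts freely, properly, cellularly, and cocompactly on $X_{[-n,n]}$. The only non-obvious point is cocompactness: take a compact fundamental domain $K\subset X$ for $G$, so that $f(K)\subset [-M,M]$ for some $M$. A translate $g\cdot K$ meets $X_{[-n,n]}$ only when $\phi(g)\in [-n-M,n+M]$. Since distinct cosets of $H$ in $G$ correspond to distinct integers under $\phi$, only finitely many $H$-cosets of translates of $K$ hit $X_{[-n,n]}$, giving a compact $H$-fundamental domain in the induced polyhedral cell structure.

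Next I would argue that $X_{[-n,n]}$ is simply connected for all sufficiently large $n$. Enlarging $I$ one Morse value at a time, each step is governed by Theorem \ref{homoeq}: the new level set is homotopy equivalent to the old one with cones attached along the ascending or descending links of the vertices of $X$ appearing at the new level. Since each such link is simply connected by hypothesis, Corollary \ref{cor1}(3) shows the inclusion induces an isomorphism on $\pi_1$. The direct limit over $n$ of the $X_{[-n,n]}$ is $X$, which is contractible as a CAT(0) cube complex; since each bonding map is a $\pi_1$-isomorphism (once the level set is non-empty and connected), each $X_{[-n,n]}$ is simply connected for large enough $n$. Connectedness stabilises by the same coning description, since each coning operation can only merge components, and the union is connected.

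Combining these two observations, for large $n$ the level set $X_{[-n,n]}$ is simply connected and the quotient $X_{[-n,n]}/H$ is a compact CW complex whose fundamental group is $H$, so $H$ is finitely presented. The main obstacle lies in the second step: at a single Morse value there may be many vertices of $X$, so the ``coning'' operation is really a simultaneous coning along a disjoint union of simply connected links, and one must verify via a pushout form of van Kampen's theorem that attaching a disjoint union of cones along simply connected subspaces preserves $\pi_1$. This is exactly where the hypothesis of \emph{simple} connectivity (rather than mere connectivity) of each link is essential, and where the cocompactness of $H$ on $X_I$ is used to reduce to a finite van Kampen diagram at each Morse level modulo $H$.
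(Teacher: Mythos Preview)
The paper does not supply a proof of this theorem; it is quoted as Theorem~4.1 of Bestvina--Brady and used as a black box. Your argument is essentially the standard one from that source: the Morse lemma (Theorem~\ref{homoeq} and Corollary~\ref{cor1}(3)) forces each inclusion $X_{[-n,n]}\hookrightarrow X_{[-m,m]}$ to be a $\pi_1$-isomorphism, the direct limit $X$ is contractible, hence every $X_{[-n,n]}$ is simply connected; meanwhile $H$ acts freely and cocompactly on $X_{[-n,n]}$, so the quotient is a finite complex with fundamental group $H$.

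Two minor points. Your opening sentence is self-contradictory as written: $H$ is not the fundamental group of the simply connected level set but of its compact \emph{quotient}; you straighten this out later, but the phrasing should be fixed. More substantively, the ``main obstacle'' you flag in the final paragraph is not one. Van Kampen's theorem handles attaching arbitrarily (even infinitely) many cones along pairwise disjoint simply connected subcomplexes with no finiteness hypothesis, so the cocompactness of the $H$-action plays no role in the $\pi_1$ computation; it is needed only to make the quotient compact. Likewise, coning along a non-empty connected link neither merges nor splits components, so in fact every non-empty $X_{[-n,n]}$ is already connected, not merely eventually so.
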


We would also like to have conditions which allow us to deduce that $H$ does not satisfy certain other finiteness properties. The following is a rephrasing of the argument in \cite{bux_bestvina-brady_1999other}.

\begin{lem}\label{lem:retractlinks}
	Let $f$ be a $\phi$-equivariant Morse function. Assume  all ascending and descending links are connected. Assume further that there is a vertex $v$ such that the $\lk(v, X)$ retracts onto $\lk_{\uparrow}(v, X)$ and $\pi_1(\lk_{\uparrow}(v, X))\neq 0$. Then $H = \ker(\phi)$ is not finitely presented. 
\end{lem}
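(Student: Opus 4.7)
The approach is to apply Brown's criterion for finite presentability to the $H$-cocompact filtration $\{X_n\}_{n\in\NN}$ of $X$ given by $X_n := X_{[-n,n]}$, and to use the retraction hypothesis to exhibit non-trivial $\pi_1$ at every stage.

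\emph{Reduction.} Each $X_n$ is $H$-invariant and $H$-cocompact, with $X = \bigcup_n X_n$. Connectedness of all ascending and descending links, via Corollary~\ref{cor1}(2), makes each inclusion $X_n \hookrightarrow X_m$ surjective on $\pi_1$ for $n \leq m$. Hence Brown's finiteness criterion specializes as follows: $H$ is finitely presented if and only if some $X_M$ satisfies $\pi_1(X_M) = 1$. It therefore suffices to produce, for each sufficiently large $M$, a non-trivial element of $\pi_1(X_M)$.

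\emph{Construction of the candidate loop.} Translate so $f(v) = 0$ and fix $g \in G$ with $\phi(g) = 1$. For each $M \geq 1$, the vertex $w_M := g^{-(M+1)} v$ lies at height $-(M+1)$, just outside $X_M$. By $G$-equivariance, $\lk(w_M, X)$ retracts onto $\lk_{\uparrow}(w_M, X) = g^{-(M+1)} \lk_{\uparrow}(v, X)$, which has non-trivial $\pi_1$. Realize $\lk_{\uparrow}(w_M, X)$ as a subcomplex of $X_M$ --- concretely, as the level-$(-M)$ cross-section of the ascending star of $w_M$, which lies in $X_M$ because $-M$ is the lowest level of $X_M$ while $w_M$ itself sits one level below. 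Let $\alpha_M$ be a loop in this realization representing a non-trivial class.

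\emph{Non-triviality.} The crux is to show $[\alpha_M]$ is non-trivial in $\pi_1(X_M)$. Suppose otherwise: let $D\colon D^2 \to X_M$ be a null-homotopy. Using the combinatorial and Morse-theoretic structure of $X_M$ near the ``puncture'' $w_M \notin X_M$, one deforms $D$ inside $X_M$ (rel boundary) to have image in the upper half $\lk(w_M, X) \cap X_M$ of the link of $w_M$. Since the retraction $r_{w_M} := g^{-(M+1)} r g^{M+1}$ restricts to a retraction of $\lk(w_M, X) \cap X_M$ onto $\lk_{\uparrow}(w_M, X)$ (since $\lk_{\uparrow}(w_M, X) \subset \lk(w_M, X) \cap X_M$), composing $D$ with $r_{w_M}$ yields a null-homotopy of $\alpha_M$ inside $\lk_{\uparrow}(w_M, X)$, contradicting its non-triviality.

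\emph{Main obstacle.} The principal technical step is the push-down deformation of $D$ into $\lk(w_M, X) \cap X_M$. This uses crucially that $w_M$ sits just outside $X_M$, creating a ``combinatorial puncture'' around which $\alpha_M$ is linked; the cubical structure of $X$ combined with the restriction of the retraction then permits one to deform disks away from this puncture onto its boundary link. Making this push-down precise is the core of the argument.
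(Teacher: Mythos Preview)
Your overall architecture is the same as the paper's: pick a vertex $w$ just below the level set in question, realize $\lk_\uparrow(w,X)$ inside that level set, and argue that a nontrivial loop there stays nontrivial. The paper phrases it as a contradiction from an assumed finite presentation (producing one simply connected $X_I$) rather than via Brown's criterion, but that is cosmetic.

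The genuine gap is your ``push-down deformation.'' You assert that a filling disk $D\colon D^2\to X_M$ can be homotoped rel boundary inside $X_M$ so that its image lies in $\lk(w_M,X)\cap X_M$, and you flag this as the core of the argument without carrying it out. There is no reason this should be possible: $D$ may wander arbitrarily far from $w_M$ inside $X_M$, and $X_M$ does not in general deformation retract onto a neighbourhood of the puncture. So as written the proof is incomplete, and the step you have isolated as ``the main obstacle'' is not the right way to proceed.

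The paper sidesteps this entirely with a one-line replacement. Since $X$ is CAT(0), geodesic projection toward $w_M$ gives a continuous retraction $X\smallsetminus\{w_M\}\to \lk(w_M,X)$ (realized as a small metric sphere about $w_M$). Compose with the hypothesised retraction $\lk(w_M,X)\to\lk_\uparrow(w_M,X)$. Because $X_M\subset X\smallsetminus\{w_M\}$ and $\lk_\uparrow(w_M,X)$ sits inside $X_M$, the restriction is a retraction $X_M\to\lk_\uparrow(w_M,X)$; hence $\pi_1(X_M)$ surjects onto $\pi_1\big(\lk_\uparrow(w_M,X)\big)\neq 0$. No disk-by-disk deformation is needed, and you never have to worry about where the image of $D$ lands relative to $X_M$. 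Replace your last two paragraphs with this and the proof goes through.
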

\begin{proof}
	Assume that $H$ is finitely presented. By Corollary \ref{cor1}, the level set $X_0$ is connected, so there are finitely many $H$ orbits of loops which generate $\pi_1(X_0)$. We can find an interval $I$ such that each of these loops is trivial in $X_I$. Once again by Corollary \ref{cor1} we see that that the inclusion $X_0\to X_I$ induces a surjection on fundamental groups. However, this map is also trivial. Thus $X_I$ is simply connected. 
	
	Let $l$ be such that $I\subseteq [-l, l]$. Let $k_0$ be the height of the vertex $v$ form the hypothesis. Using the action of $\ZZ$ on $X$, we can find $g\in G$ such that $f(g\cdot v) > l$. 
	Let $w = g\cdot v$. Then there is a $k > l$ such that $f(w) = k$ and $\pi_1(\lk_{\downarrow}(w, X))\neq 0$.
	Let $L = [-l, k-\epsilon]$, since this contains $J$ and the map $\pi_1(X_J)\to \pi_1(X_L)$ is surjective, we see that $\pi_1(X_L) = 0$. There is a retraction of $X\smallsetminus\{w\}$ onto $\lk(w, X)$ which further retracts onto the space $\lk_{\downarrow}(w, X)$. Restricting this retraction to $X_L\subseteq X\smallsetminus\{w\}$ we get a retraction from $X_L\to \lk_{\downarrow}(w, X)$. This gives a surjection $\pi_1(X_L)\to \pi_1(\lk_{\downarrow}(w, X))$. However $\pi_1(\lk_{\downarrow}(w, X))$ is non trivial by assumption giving the required contradiction. 
\end{proof}

\section{Main Construction}

We detail a construction of several new hyperbolic groups which have subgroups with interesting finiteness properties. These examples come in two flavours; we can construct groups of type $F_2$ not $F_3$ giving more examples similar to those in \cite{brady_branched_1999, lodha_hyperbolic_2017other,kropholler_finitelypresented_2018}. We also give the first construction of groups of type $FP_2$ which are not finitely presented that are contained in hyperbolic groups. 

We only detail the construction of groups of type $FP_2$ not $F_2$. To create groups which are of type $F_2$ not $F_3$ one should construct $\Gamma_B$ by starting with a simply connected complex $L$ that has no local cut points. One also requires the stated strengthening of Lemma \ref{lem:homologyofdouble}. The proof then runs in exactly the same way however $S(L')$ will be simply connected throughout. 

We build the hyperbolic groups by taking appropriate branched covers of $\xcplx$. As such we must begin by describing the flag complexes $\Gamma_A$ and $\Gamma_B$. 

Let $V_n$ be a discrete set with $n$ points. Set $\Gamma_A = V_4\ast V_4\ast V_4$ which is a join of discrete sets. 

\begin{rem}
	We take $V_4$ for concreteness and this construction works for $V_n$ as long as $n\geq 4$. We shall not give the proof in this general case as it obfuscates some details. 
\end{rem}

The complex $\Gamma_B$ is a little more delicate and the procedure for obtaining it is described below. 

\begin{definition}
	For a simplicial complex $L$ the {\em octahedralisation} $S(L)$ is defined as follows. For each vertex $v$ of $L$, let $S^0_v = \{v^+, v^-\}$ be a copy of $S^0$. For every simplex $\sigma$ of $L$ take $S_{\sigma} = \ast_{v\in \sigma}S^0_v$. If $\tau<\sigma$, then there is a natural map $S_{\tau}\to S_{\sigma}$. $S(L) = \bigcup_{\sigma< L} S_{\sigma}/\sim$, where the equivalence relations $\sim$ is generated by the inclusions $S_{\tau}\to S_{\sigma}.$
\end{definition}

\begin{rem}
	The map defined by $S^0_v\to \{v\}$ extends to a retraction of $S(L)$ to $L$ (not a deformation retraction). 
In particular, if $L$ is connected and $\pi_1(L)\neq 0$, then $\pi_1(S(L))\neq 0$. 
\end{rem}

It is proved in \cite{bestvina_morse_1997} that if $L$ is a flag complex, then $S(L)$ is a flag complex. 

\begin{definition}
	A connected simplicial complex $L$ has {\em no local cut points}, abbreviated to nlcp, if $\lk(v, L)$ is connected and not equal to a point for all vertices $v\in L$.
\end{definition}

The following lemma can be found in \cite{kropholler_almost_2018, leary_uncountably_2015other}.

\begin{lem}\label{lem:homologyofdouble}
	Assume $L$ has nlcp. If $H_1(L) = 0$, then $H_1(S(L)) = 0$. 
\end{lem}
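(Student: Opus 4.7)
My plan is to use the retraction $r\colon S(L)\to L$ from the preceding remark together with its section $i\colon L\hookrightarrow S(L)$ defined by $v\mapsto v^+$. Since $r\circ i=\id_L$, it suffices to show that every $1$-cycle $c$ in $S(L)$ is homologous to a $1$-cycle supported on the subcomplex $i(L)\subseteq S(L)$: the assumption $H_1(L)=0$ then gives $r_*(c)=\partial D$ for some $2$-chain $D$ in $L$, and $\partial i_*(D)$ equals the pushed cycle.

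The nlcp hypothesis enters through the following observation. For each vertex $u$ of $L$ and each pair $y,z\in\lk(u,L)$, nlcp provides an edge path $y=x_0,x_1,\dots,x_k=z$ inside $\lk(u,L)$, equivalently a sequence of $2$-simplices $\{u,x_i,x_{i+1}\}$ in $L$. In particular, every edge of $L$ lies in some $2$-simplex.

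The core of the argument is a straightening procedure. For each vertex $u$ of $L$, each neighbour $y$ of $u$ in $L$, and each sign $a\in\{+,-\}$, set $\Delta_y^a = [u^+,y^a]-[u^-,y^a]$. A direct computation gives, whenever $\{u,y,z\}$ is a $2$-simplex of $L$,
\[
\Delta_y^a-\Delta_z^b = \partial\bigl([u^+,y^a,z^b]-[u^-,y^a,z^b]\bigr),
\]
and concatenating these elementary boundaries along the nlcp path shows $\Delta_y^a-\Delta_z^b$ is a boundary in $S(L)$ for any $y,z\in\lk(u,L)$ and any signs $a,b$. Given a $1$-cycle $c$ in $S(L)$ containing some $u^-$-vertex, replace every edge of $c$ incident to $u^-$ by the corresponding edge at $u^+$ to obtain a new chain $c'$. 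Then $c-c'$ is a $\ZZ$-linear combination of $\Delta_y^a$'s whose coefficient-sum vanishes by the cycle condition of $c$ at $u^-$; so $c-c'$ rewrites as a $\ZZ$-sum of differences $\Delta_y^a-\Delta_z^b$, each of which is a boundary. Hence $c\sim c'$, and $c'$ no longer uses $u^-$. Iterating over all vertices of $L$ produces a cycle supported on $i(L)$, completing the proof.

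The subtlest step is the bookkeeping in the straightening: one must check that the cycle condition really does force the coefficient-sum to vanish (it does, by computing the contribution of $u^-$-incident edges to $\partial c$), and that iterated pushes at different vertices do not interfere (they do not, since the push at $u$ only modifies edges adjacent to $u^\pm$).
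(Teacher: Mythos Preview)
Your argument is correct. The chain-level straightening is sound: the boundary identity
\[
\partial\bigl([u^+,y^a,z^b]-[u^-,y^a,z^b]\bigr)=\Delta_y^a-\Delta_z^b
\]
holds whenever $\{u,y,z\}$ is a $2$-simplex of $L$, and nlcp (connectedness of $\lk(u,L)$) lets you telescope to arbitrary $y,z\in\lk(u,L)$. The ``not a point'' clause of nlcp is genuinely needed here, since with $\lk(u,L)=\{y\}$ the cycle $\Delta_y^+-\Delta_y^-$ would not bound. The cycle condition at $u^-$ does force the coefficient sum $\sum n_j=0$, so $c-c'$ is a $\ZZ$-combination of differences $\Delta_y^a-\Delta_z^b$ and hence a boundary; and the iteration terminates because each push removes $u^-$ from the support without introducing any new minus-vertices (the replacement $[u^-,y^a]\mapsto[u^+,y^a]$ leaves the second coordinate untouched).

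As for comparison: the paper does not give its own proof of this lemma but defers to \cite{kropholler_almost_2018,leary_uncountably_2015other}. The arguments there are organised somewhat differently---Leary, for instance, analyses $S(L)$ via its reflection-group symmetry and an inductive decomposition---but your direct chain-homotopy approach is equally valid and arguably more elementary, trading structural input for an explicit bookkeeping computation.
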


\begin{rem}
	For the $F_2$ not $F_3$ case of the main theorem a stronger version of this theorem is required. Namely, using the argument proving Lemma 6.2 of \cite{kropholler_almost_2018}, one can show that in the above case if $\pi_1(L) = 0$, then $\pi_1(S(L)) = 0$. 
\end{rem}

Given a connected simplicial complex $L$, there is a homotopy equivalent complex $K$ with no local cut points. One way of obtaining such a complex is to take the mapping cylinder of the inclusion of the 1-skeleton. For details see \cite{leary_uncountably_2015other}.

\begin{rem}
	Given a tripartite complex $L$, there is a natural tripartite structure on $S(L)$. 
\end{rem}

We are now ready to define the tripartite complex $\Gamma_B$. Given a simplicial complex $L$, let $L'$ denote the barycentric subdivision of $L$. 

Let $L$ be a 2-dimensional simplicial complex with no local cut points. Let $\Gamma_B = S(S(L'))$. Note that $L'$ has a natural tripartite structure, that is $L'\subseteq B_1\ast B_2\ast B_3$ where $B_i$ is the set of vertices that are barycenters of $(i-1)$-cells of $L$. Also given a tripartite structure on a complex $K\subseteq K_1\ast K_2\ast K_3$, there is a natural tripartite structure on $S(K)$ given by viewing $S(K)$ as a subspace of $S(K_1)\ast S(K_2)\ast S(K_3)$.

Before moving on it will be useful to understand the links of vertices in $\xcplx$. 

The vertices in $\xcplx$ are of the form $(v_1, v_2, v_3)$ where $v_i\in A_i\cup B_i$. Let $a_i\in A_i$ and $b_i\in B_i$. We will use Lemma \ref{lem:links.in.xcplx} to compute $\lk((v_1, v_2, v_3), \xcplx)$. 

The first case is that of a vertex of the form $(a_1, a_2, a_3)$ we obtain the following, 
$$\lk((a_1, a_2, a_3), \xcplx)=\lk(\emptyset, \Gamma_B)\ast \lk([a_1, a_2, a_3], \Gamma_A) = \Gamma_B\ast \emptyset = \Gamma_B.$$ 
Similarly $\lk((b_1, b_2, b_3), \xcplx) = \Gamma_A$. 

We now examine the intermediate vertices. For vertex of the form $(a_1, b_2, b_3)$ we have the following $$\lk((a_1, b_2, b_3), \xcplx) = \lk(a_1, \Gamma_A)\ast \lk([b_2, b_3], \Gamma_B).$$ By retracting onto $L'$ we can see that $\lk([b_2, b_3], \Gamma_B) = S(S(\lk([\overline{b_2}, \overline{b_3}], L')))$. Since $[b_2, b_3]$ is an edge joining the barycenter of a 1-cell to the barycenter of a 2-cell, we see that $\lk([\overline{b_2}, \overline{b_3}], L') = S^0$. Also $\lk(a_1, \Gamma_A) = V_4\ast V_4$. Thus, $\lk((a_1, b_2, b_3), \xcplx) = V_4\ast V_4\ast S(S(S^0))$. 

Making a similar analysis we obtain that $$\lk((b_1, a_2, b_3), \xcplx) = V_4\ast V_4\ast S(S(S^0)).$$ 
Considering a vertex of the form $(b_1, b_2, a_3)$, we have $$\lk((b_1, b_2, a_3), \xcplx) = \lk(a_3, \Gamma_A)\ast \lk([b_1, b_2], \Gamma_B).$$ We can still retract to show that $\lk([b_1, b_2], \Gamma_B) = S(S(\lk([\overline{b_1}, \overline{b_2}], L')))$, the latter is a discrete set and is non-empty since $L'$ has nlcp. Thus, $$\lk((b_1, b_2, a_3), \xcplx) = V_4\ast V_4\ast S(S(V_n)),$$ where the $n$ depends on the vertices $b_1$ and $b_2$. 

For a vertex of the form $(a_1, a_2, b_3)$ we obtain that $$\lk((a_1, a_2, b_3), \xcplx) = V_4\ast S(S(\lk(\overline{b_3}, L'))).$$ Since $\overline{b_3}$ is the barycenter of a 2-cell, we obtain that $\lk(\overline{b_3}, L') = \hexagon$ where $\hexagon$ is a copy of $S^1$ triangulated with 6 vertices. Thus, $$\lk((a_1, a_2, b_3), \xcplx) = V_4\ast S(S(\hexagon)).$$

For a vertex of the form $(a_1, b_2, a_3)$ we similarly obtain that $$\lk((a_1, b_2, a_3), \xcplx) = V_4\ast S(S(\lk(\overline{b_2}, L'))).$$ In this case $\overline{b_2}$ is the barycenter of a 1-cell and thus $\lk(\overline{b_2}, L') = V_n\ast S^0$. Since $L'$ has nlcp we see that $n\geq 1$ and depends on the chosen vertex $b_2$.

Finally a vertex of the form  $(b_1, a_2, a_3)$ has link $$\lk((b_1, b_2, b_3), \xcplx) = V_4\ast S(S(\lk(\overline{b_1}, L'))).$$ The vertex $\overline{b_1}$ is a vertex of the original complex $L$ and thus $\lk(\overline{b_1}, L') = \lk(\overline{b_1}, L)'$. Set $\Lambda = \lk(\overline{b_1}, L)$. Thus $$\lk((b_1, b_2, b_3), \xcplx) = V_4\ast S(S(\Lambda'))).$$

These links are summarised in table \ref{table:links}.

\begin{table}[h!]
	\centering
	\begin{tabular}{|c|rcl|}
		\hline
		Vertex of $\xcplx$ &\multicolumn{3}{c|}{Link of the vertex in $\xcplx$}\\
		\hline
		$(a_1, a_2, a_3)$& $\Gamma_B $&$=$&$ S(S(L'))$ \\
		$(a_1, a_2, b_3)$& $\lk(b_3, \Gamma_B)\ast V_4 $&$=$&$ S(S(\hexagon))\ast V_4$\\
		$(a_1, b_2, a_3)$& $\lk(b_2, \Gamma_B)\ast V_4 $&$=$&$ S(S(V_n\ast S^0))\ast V_4$\\
		$(a_1, b_2, b_3)$& $\lk([b_2, b_3], \Gamma_B)\ast V_4\ast V_4 $&$=$&$ S(S(S^0))\ast V_4\ast V_4$\\
		$(b_1, a_2, a_3)$& $\lk(b_1, \Gamma_B)\ast V_4 $&$=$&$ S(S(\Lambda'))\ast V_4$\\
		$(b_1, a_2, b_3)$& $\lk([b_1, b_3], \Gamma_B)\ast V_4\ast V_4 $&$=$&$ S(S(S^0))\ast V_4\ast V_4$\\
		$(b_1, b_2, a_3)$& $\lk([b_1, b_2], \Gamma_B)\ast V_4\ast V_4 $&$=$&$ S(S(V_n))\ast V_4\ast V_4$\\
		$(b_1, b_2, b_3)$& $\Gamma_A $&$=$&$ V_4\ast V_4\ast V_4$\\
		\hline
	\end{tabular}
	\caption[The links of various vertices in $\xcplx$.]{The links of various vertices in $\xcplx$. Here $\Lambda'$ is the barycentric subdivision of the link of the vertex corresponding to $b_1$ in $L$. Also, \hexagon is a copy of $S^1$ triangulated with six vertices. It should be noted that, since $L$ has nlcp, the graph $\Lambda$ is connected and $V_n$ contains at least one point. }
	\label{table:links}
\end{table}

\subsection{The Morse theory}

To define a Morse function on the complex $\xcplx$ we begin by defining a Morse function on each of the graphs $A_i\ast B_i$. We do this by putting an orientation on each edge. 

Given $\Gamma_A, \Gamma_B$ as above consider the cube complex $\xcplx$. This is naturally contained in $X = \prod_{i=1}^{3} A_i\ast B_i$. 

There is a bipartion of the vertex set of $S(\Gamma)$ for any complex $\Gamma$ extending the bipartition of $S^0$. Thus we can bipartition the sets $B_i$ from the final application of octahedralisation. Take any bipartition of $A_i = A_i^+\sqcup A_i^-$ such that $|A_i^+| = 2$. 

Let $v\in A_i$ be a vertex and $w\in B_i$ be a vertex. 

\begin{itemize}
	\item Orient the edge from $v$ to $w$ if $v\in A_i^+$ and $w\in B_i^+$ or if $v\in A_i^-$ and $w\in B_i^-$. 
	\item Orient the edge from $w$ to $v$ if $v\in A_i^+$ and $w\in B_i^-$ or if $v\in A_i^-$ and $w\in B_i^+$. 
\end{itemize}

We can then use this to define a Morse function on $X$ and restricting this we get a Morse function $f$ on $\xcplx$. 

The ascending (resp. descending) link is the full subcomplex of the link spanned by vertices corresponding to those edges oriented toward (resp. away from) the vertex.  Let $a_1\in A_i$ and $b_i\in B_i$. An edge emananting from  the vertex $(v_1, v_2, v_3)$ correspond to changing a coordinate of the form $a_i$ to a coordiante of the form $b_i$ or vice versa.

In the case that the coordinate $a_i$ was in $A_i^+$, the outgoing edges correspond to $B_i^+$, similarly if $a_i\in A_i^-$, then the outgoing edges correspond to $B_i^-$. 

The ascending and descending links are the full subcomplexes of the link spanned by vertices corresponding to edges oriented towards or away from the vertex respectively. The ascending and descending links for this Morse function are in Table \ref{table:ascdesc}.

\begin{table}[h!]
	\centering
	\begin{tabular}{|c|c|}
		\hline
		Vertex & Ascending Link/Descending link\\
		\hline
		$(a_1, a_2, a_3)$& $S(L')$ \\
		$(a_1, a_2, b_3)$& $S(\hexagon)\ast S^0$\\
		$(a_1, b_2, a_3)$& $S(V_n\ast S^0)\ast S^0$\\
		$(a_1, b_2, b_3)$& $S(S^0)\ast S^0\ast S^0$\\
		$(b_1, a_2, a_3)$& $S(\Lambda')\ast S^0$\\
		$(b_1, a_2, b_3)$& $S(S^0)\ast S^0\ast S^0$\\
		$(b_1, b_2, a_3)$& $S(V_n)\ast S^0\ast S^0$\\
		$(b_1, b_2, b_3)$& $S^0\ast S^0\ast S^0$\\
		\hline
	\end{tabular}
	\caption{Ascending and descending links for the Morse function $f$. }
	\label{table:ascdesc}
\end{table}

We can see that the ascending and descending links of $f$ are simply connected with the exception of $S(L')$, cf. Table \ref{table:ascdesc}. 

\subsection{The branched cover} \label{sec:branched}

We are now ready to take the branched cover of $\xcplx$. Consider the subcomplex $Z$ of $\prod_{i=1}^3 A_i\ast B_i$
\begin{align*}
Z =& (A_1\ast B_1)\times B_2\times A_3\, \sqcup\\
&A_1\times (A_2\ast B_2)\times B_3 \,\sqcup\\
&B_1\times A_2\times (A_3\ast B_3).
\end{align*}

We take as our branching locus $Y = Z\bigcap \xcplx$. We now check that this satisfies \cref{def:branchinglocus}. 

Since the link of each vertex in $\xcplx$ is tripartite, there is an embedding $\lk(v, \xcplx)\to C_1\ast C_2\ast C_3$. Considering each of the complexes with the CAT(1) metric this map is distance non-increasing. The vertices corresponding to the branching locus $Y$ form one of the sets $C_i$. The distance between any two point in $C_i$ is $\pi$ in $C_1\ast C_2\ast C_3$. Thus the distance is at least $\pi$ in $\lk(v, \xcplx)$. Thus the subcomplex $Y$ is locally isometrically embedded.

We must now check that $\lk(c, \xcplx)\smallsetminus\lk(c, Y)$ is non-empty and connected for all cubes $c\in Y$. 
Since $\lk(e, Y) = \emptyset$ for all edges $e$ we see that we only need to check the vertices of $Y$. 

Let $v$ be a vertex of $Y$. We will examine the case that $v = (a_1, a_2, b_3)$, the other cases are similar. Then $\lk(v, \xcplx)$ is a join $\lk([a_1, a_2], \Gamma_A)\ast \lk(b_3, \Gamma_B)$. Since $\Gamma_B$ has no local cut points we see that $\lk(b_3, \Gamma_B)$ is a connected graph. Thus $\lk([a_1, a_2], \Gamma_A)\ast \lk(b_3, \Gamma_B)$ has no local cut points and removing a discrete set from a space with no local cut points preserves connectivity. 

Thus $Y$ is a branching locus in the sense of \cref{def:branchinglocus}. 

To take a  branched cover we use a similar procedure to that detailed in \cite{brady_branched_1999,kropholler_almost_2018}. 

Define $\Gamma_{A_{ij}}$ to be the subcomplex of $\Gamma_A$ obtained by taking the full subcomplex on the vertices in $A_i\cup A_j$. Define $\Gamma_{B_{ij}}$ similarly. Since $\Gamma_A$ and $\Gamma_B$ have no local cut points both of these are connected bipartite graphs. 

We project $\xcplx\smallsetminus Y$ to the 2-dimensional complexes $\xcplxij{i}{j}\smallsetminus (A_i\times B_j)$ for $(i, j)\in \{(3, 2), (2, 1), (1, 3)\}$. 
These are restrictions of the maps $\rho_k\colon\prod_{l=1}^3 A_l\ast B_l\to (A_i\ast B_i)\times (A_j\ast B_j)$ projecting away from the $k$-th coordinate. 
Since we have removed the set $A_i\times B_j\times (A_k\ast B_k)$ we can see that there are no vertices in the image of the form $(a, b)$ for $a\in A_i, b\in B_j$. 

Each of the complexes $\xcplxij{i}{j}\smallsetminus (A_i\times B_j)$ deformation retracts onto a graph $\Lambda_{ij}$. Indeed each square of $\xcplxij{i}{j}$ has exactly one corner in $A_i\times B_j$ and we can homotope radially from this corner. Thus $\Lambda_{ij}$ is a subgraph of $\Xi_{ij}$, where $\Xi_{ij}$ is the subgraph of $(A_i\ast B_i)\times (A_j\ast B_j)$ spanned by vertices not in $A_i\times B_j$ i.e.  $\Xi_{ij} = \big((A_i\ast B_i) \times A_j\big)\cup \big(B_i\times (A_j\ast B_j)\big).$

We will take a cover by giving a representation of the fundamental group to $S_p$, where $S_p$ is the symmetric group on $p$ letters. 
We briefly outline the key points that this cover will satisfy before delving into the details.

\begin{table}[h!]
	\centering
	\begin{tabular}{|c|c|c|c|}
		\hline
		Vertex & Link & Link container & Branch set\\
		\hline
		$(a_1, a_2, b_3)$&$S(S(\boldhexagon))\ast V_4$   &$B_1\ast B_2\ast A_3$&$B_2$\\
		$(a_1, b_2, a_3)$&$\bm{S(S(V_n))}\ast V_4 \ast S(S(S^0))$&$B_1\ast A_2\ast B_3$&$B_1 = S(S(V_n))$\\
		$(a_1, b_2, b_3)$&$S(S(S^0))\ast \bm{V_4}\ast V_4$&$B_1\ast A_2\ast A_3$&$A_2 = V_4$\\
		$(b_1, a_2, a_3)$&$S(S(\Lambda{\bm{'}}))\ast V_4$   &$A_1\ast B_2\ast B_3$&$B_3$\\
		$(b_1, a_2, b_3)$&$V_4\ast S(S(S^0))\ast \bm{V_4}$&$A_1\ast B_2\ast A_3$&$A_3 = V_4$\\
		$(b_1, b_2, a_3)$&$\bm{V_4}\ast V_4\ast S(S(V_n))$&$A_1\ast A_2\ast B_3$&$A_1 = S(S(V_n))$\\
		\hline
	\end{tabular}
	\caption{Links in $\xcplx$ and their branch sets. The link container is the natural tripartite complex which contains the link. The branch sets are in bold in the link. In the first row the branch set is $S(S(W))$ where $W$ is the three bold vertices of the hexagon. In the fourth row the branch set is $S(S(B))$ where $B$ are the barycenters of edges in $\Lambda'$.}
	\label{table:branchlinks}
\end{table} 

\begin{lem}\label{lem:defretractlinks}
	Let $v$ be a vertex of $Y$. 
	The space $\lk(v, \xcplx)\smallsetminus\lk(v, Y)$ deformation retracts onto a complete bipartite graph $\Omega_v$. 
\end{lem}
\begin{proof}
	We have already seen that $\lk(v, \xcplx)$ is the join of a bipartite graph and a discrete set. 
	We will carefully examine the case that $v = (a_1, a_2, b_3)$, the analysis for the other cases is the same. 
	
	From \Cref{table:branchlinks}, we see that $\lk(v, \xcplx)\subseteq B_1\ast B_2\ast A_3$ and $\lk(v, Y)  \subseteq B_2$.
	By the definition of $Y$, we see that $B_2\cap \lk(v, \xcplx) = \lk(v, Y)$. 
	Let $C_2 = \lk(v, Y)$. 
	We can see that $\lk(v, \xcplx)\subseteq B_1\ast C_2\ast A_3$. 
	
	There is a deformation retraction $(B_1\ast C_2\ast A_3)\smallsetminus C_2\to B_1\ast A_3$.
	To see this note that each triangle of $B_1\ast C_2\ast A_3$ has a single vertex in $C_2$. 
	We can push radially out from this vertex to achieve the desired deformation retraction. 
	
	For any subcomplex $\Sigma$ of $B_1\ast C_2\ast A_3$ we still have that each triangle has a single vertex in $C_2$. 
	Thus we can push out radially in each triangle from this vertex and gain a deformation retraction $\Sigma\smallsetminus C_2\to \Sigma\cap B_1\ast A_3$. 
	
	In the case of $v$, we see, from \Cref{table:links}, that $\lk(v, \xcplx) = S(S(\hexagon))\ast V_4$, where $\hexagon$ is a copy of $S^1$ triangulated with 6 vertices. 
	Thus, in this case we have that $\lk(v, X)\cap B_1\ast A_3$ is $S(S(W))\ast V_4$, where $W$ is a set of three vertices corresponding to alternating vertices of $\hexagon$. 
	This is a complete bipartite graph, which we label $\Omega_v$. 
	
	Examining \Cref{table:branchlinks}, shows that the same result holds for all other vertices of the branching locus. 
\end{proof}

When taking our cover we will be particularly interested in loops of length 4 in $\Omega_v$. 
These correspond to commutators in the fundamental group of $\xcplx\smallsetminus Y$. 

These loops play a large role in both proving \cref{thm:morsebranch} and \cref{thm:hypbranch}. 
Let $\bar{v}$ be a vertex in the preimage of $v$. 
In the link of this vertex there is a cover $\overline{\Omega}_v$ of $\Omega_v$, we will require the following two properties. 
\begin{itemize}
	\item $\overline{\Omega}_v$ has no loops of length 4. 
	\item For each loop of length 4 in $\Omega_v$ its preimage is connected in $\overline{\Omega}_v$. 
\end{itemize}
Under the map of $\xcplx\smallsetminus Y\to \Lambda_{ij}$, these loops of length 4 map to certain loops of length 8. 
We will be particularly intereseted in the image of these loops of length 8 under the representation. 
With these properties in mind, we give a formal definition of the cover used.

Let $q_{ij}$ be a prime to be determined later. Let $\alpha$ be a permutation of order $q_{ij}$ and $\beta$ be a permutation such that $\beta\alpha{\beta^{-1}} = \alpha^l$ where $l$ is a generator of $\ZZ_{q_{ij}}^{\times}$. Note that $[\alpha^a, \beta^b] = \alpha^{a(l^b-1)}$, this is a non-trivial power of $\alpha$ if $0<a<q_{ij}$ and $0<b<q_{ij}-1$ and is hence an element of order $q_{ij}$.

We define a homomorphism $\phi_{ij}\colon \pi_1(\Lambda_{ij})\to S_{q_{ij}}$ by labeling the graph $\Xi_{ij}$ with elements of $S_{q_{ij}}$. We label the edges of $(A_i\ast B_i) \times B_j$ by powers of $\alpha$ such that no two edges oriented towards a vertex have the same label. We label the edges of $A_i\times (A_j\ast B_j)$ by powers of $\beta$ with the same condition. This required us to pick $q_{ij}$ larger than the valence of any vertex in $A_j\ast B_j$ or $A_i\ast B_i$. 

This gives us three representations of $\pi_1(\xcplx\smallsetminus Y)$ in the symmetric groups $S_{q_{ij}}$ for $(i, j)\in \{(1, 2), (2, 3), (3, 1)\}$. We can combine these to get a representation into $S_{q_{12}q_{23}q_{31}}$ and take the cover corresponding to the stabiliser of 1 in this subgroup. We then lift the metric and complete to obtain the branched cover $\xbarcplx$. This is a non-positively curved cube complex by Lemma \ref{npcbranch}. 

\subsubsection{The links in the branched cover}

Throughout, we will consider the link of a vertex $v$ to be the $\frac{1}{4}$-sphere centered at $v$. 
Shortly, we will look at the ascending and descending links for a new Morse function. We first compute the link of each vertex in $\xbarcplx$. The link of a vertex in $\xbarcplx$ is a branched cover of a link in $\xcplx$. 

Recall, there are 8 types of vertex in $\xcplx$ we will examine each type in turn. The vertices which are of the form $(a_1, a_2, a_3)$ or $(b_1, b_2, b_3)$ are disjoint from the branching locus, thus the link of such a vertex lifts to the cover $\xbarcplx$. 

Let $v$ be a vertex on the branching locus $Y$. Let $w$ be a vertex in $\xbarcplx$ mapping to $v$. We get a branched covering $\lk(w, \xbarcplx)\to \lk(v, \xcplx)$. The points at which this map is not a local isometry are the vertices corresponding to $Y$ in $\lk(v, \xcplx)$. We call these vertices the {\em branch set}. We will denote $\lk(v, \xcplx)\smallsetminus Y$ by $\overline{\lk}(v, \xcplx)$

To understand the covering of each link we look at the three projections. We will examine the case of the vertex $v = (a_1, b_2, a_3)$. Under the projection $\rho_2$ this vertex is mapped to $(a_1, a_3)\in \xcplxij{1}{3}\smallsetminus A_1\times B_3$. Thus the link of $v$ is mapped into the $\frac{1}{4}$-neighbourhood of $(a_1, a_3)$. This neighbourhood is contractible. Thus the composition $$\pi_1(\overline{\lk}(v, \xcplx))\to \pi_1(\xcplx\smallsetminus Y)\to \pi_1(\xcplxij{1}{3}\smallsetminus (A_1\times B_3))$$ is trivial. Similarly for $\rho_3$. Thus neither of these representations change $\overline{\lk}(v, \xcplx)$. 

By \Cref{lem:defretractlinks}, there is a deformation retraction from $\overline{\lk}(v, \xcplx)$ to the subgraph $\Omega_v$ spanned by the vertices not in the branch set. This is a join of two discrete sets. 

In the case of $v = (a_1, b_2, a_3)$, we see that $\rho_1$ gives an isometric inclusion $\Omega_v$  into $\lk((b_2, a_3), \xcplxij{2}{3})$. 
Further under the composition $$\Omega_v\to \xcplx\smallsetminus Y\to \xcplxij{2}{3}\smallsetminus (A_2\times B_3)\to  \Xi_{23}$$ each loop $\omega$ of length 4 in $\Omega_v$ is sent to a loop $\xi$ of length 8 in $\Xi_{23}$.

Under the representation to $S_{q_{23}}$ the element corresponding to $\gamma$ is sent to a non-trivial power of $\alpha$. Thus this is a $q_{23}$-cycle and under the covering the preimage of $\omega$ is connected and is a loop of length $4q_{23}$. 

Each vertex in the branch set cones off a subgraph $\Omega'$ of $\Omega_v$. When we complete the cover, each vertex in the completion of the link will cone off the corresponding lift of $\Omega'$.

\begin{rem}\label{rem:4loops}
	Since all the loops of length 4 have connected preimage the cover of $\Omega_v$ will have no loops of length 4.
\end{rem}

This will be useful in proving the hyperbolicity of the branched cover shortly. 

There is a naturally defined Morse function $\bar{f}$ on $\xbarcplx$ by composing the Morse function on $\xcplx$ with the branched covering map. 
The ascending and descending links of $\bar{f}$ are the preimages of the ascending and descending links of $f$ under the branched covering map. 

We now examine these ascending and descending links. We can see the ascending and descending links in $\xbarcplx$ as branched covers of the ascending and descending links in $\xcplx$ over the branch set. Since a neighbourhood of any point not on the branching locus lifts to the branched cover, the ascending and descending links of vertices of the form $(a_1, a_2, a_3)$ and $(b_1, b_2, b_3)$ remain unchanged. 

The other links change under this branched covering map. We must show that these branched covers are simply connected. For what follows the branch set will be referred to as $V$. See \cref{table:branchlinks} for a description of the branch set in each vertex.

\begin{lem}\label{lem:int4cycles}
	In all the complexes $\Gamma$ in the Table \ref{table:ascdesc} there is an ordering $v_1, v_2, \dots$ on the set $V$ such that $\lk(v_i, \Gamma)\cap \bigcup_{j<i} \st(v_j, \Gamma)$ is connected and covered by loops of length 4.
	
	Furthermore, $\lk(v, \xcplx)$ is covered by the stars of links in the branch set. 
\end{lem}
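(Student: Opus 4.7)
The argument is a case analysis over the eight entries of Table~\ref{table:ascdesc}, with a unifying theme: in every case the branch set $V$ is in fact an independent set of vertices in $\Gamma$, so the required intersection is empty and the two conditions hold vacuously.

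For the two entries labelled $(a_1,a_2,a_3)$ and $(b_1,b_2,b_3)$, the central vertex is disjoint from $Y$, so $V=\emptyset$ and there is nothing to verify. For each of the six remaining central vertices $v$, the first step is to determine which of the three pieces of $Z$ contains $v$. A direct check using the definitions shows that each such $v$ lies in exactly one piece, and therefore every edge of $\xcplx$ based at $v$ and contained in $Y$ must vary along the one ``join factor'' coordinate of that piece. Writing $k=k(v)$ for this coordinate, every vertex of $V$ corresponds to a coordinate-$k$ edge from $v$.

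The next step is to invoke the tripartite structure of $\Gamma$. Since $\Gamma_B=S(S(L'))$ inherits the tripartition of $L'$ (indexed by the dimension of cells in $L$), and $\Gamma_A=V_4\ast V_4\ast V_4$ is manifestly tripartite, and since the octahedralisation $S$ preserves colours and only introduces edges between distinct colours while joins respect the tripartition, each entry of Table~\ref{table:ascdesc} is a tripartite flag complex with classes indexed by the three coordinate directions. In particular, two vertices of the same coordinate class are never adjacent in $\Gamma$. Combined with the previous paragraph, this shows that $V$ lies entirely in the coordinate-$k$ class of $\Gamma$, so $V$ is an independent set.

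Independence now finishes the proof: for distinct $v_i,v_j\in V$ we have $v_j\notin\lk(v_i,\Gamma)$, and so $\st(v_j,\Gamma)\cap\lk(v_i,\Gamma)=\emptyset$. Therefore, for any ordering of $V$ and any $i$,
\[
\lk(v_i,\Gamma)\cap\bigcup_{j<i}\st(v_j,\Gamma)=\emptyset,
\]
which is vacuously connected and vacuously covered by loops of length $4$. The main obstacle is the bookkeeping in the identification above: for each of the six non-trivial cases one must match the abstract ``coordinate-$k$ part of the link'' with the concrete factor appearing in Table~\ref{table:ascdesc}, after the outer $S$ has been stripped off to pass to the ascending or descending link and joined with the $S^0$ factors coming from the ascending or descending portion of $\Gamma_A$. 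Once this identification is in place the conclusion is immediate.
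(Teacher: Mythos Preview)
Your argument contains a genuine error at the key step. From the (correct) observation that $V$ is an independent set you conclude that $\st(v_j,\Gamma)\cap\lk(v_i,\Gamma)=\emptyset$ whenever $i\neq j$. This implication is false: the closed star of $v_j$ contains every neighbour of $v_j$, and in all six nontrivial cases the branch vertices share many neighbours. For instance, take the vertex of type $(a_1,b_2,b_3)$, where $\Gamma=S(S^0)\ast S^0\ast S^0$ and $V$ is one of the two $S^0$ join factors. Writing $V=\{v_1,v_2\}$, we have $\lk(v_1,\Gamma)=\lk(v_2,\Gamma)=S(S^0)\ast S^0$, and hence
\[
\lk(v_1,\Gamma)\cap\st(v_2,\Gamma)=\lk(v_1,\Gamma),
\]
the entire link rather than the empty set. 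This is precisely what the paper records for these four ``easy'' cases. A second problem is that even if the intersections were empty, the conclusion would not follow: the empty set is not connected, and in any event the lemma is used immediately afterwards to glue contractible cones along \emph{connected} subspaces in order to preserve simple connectedness, so empty intersections would break the intended application.

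What is actually needed is a nonvacuous argument. In the four cases where $\Gamma$ is a threefold join with $V$ a discrete join factor, every $v_i\in V$ has the same link (the join of the other two factors), so the intersection in question equals $\lk(v_i,\Gamma)$, which is a suspension of a nonempty discrete set of size at least two and hence is connected and covered by $4$-cycles. The remaining two cases, $S(\hexagon)\ast S^0$ and $S(\Lambda')\ast S^0$, require more: here the branch vertices do not all have the same link, and one must choose an ordering carefully. The paper orders the underlying vertices $w_1,\dots,w_n$ of the bipartite graph $\Lambda'$ (respectively $\hexagon$) so that each $\st(w_i)$ meets some earlier $\st(w_j)$, and then interleaves the $\pm$ copies as $w_1^-,w_1^+,w_2^-,w_2^+,\dots$; with this ordering each intersection is again a suspension of a discrete set with at least two points. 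Your proposal does not supply any of this.
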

\begin{proof}
	For the cases of vertices of type $(b_1, a_2, b_3), (b_1, b_2, a_3), (a_1, b_2, a_3)$ and $(a_1, b_2, b_3)$ this is clear since $\lk(v_i)\cap \st(v_j) = \lk(v_i)$ and $\lk(v_i)$ is the suspension of a discrete non-empty set with at least 2 elements.  
	
	We provide the proof for the complex $S(\Lambda{'})\ast S^0$, the proof for $S(\hexagon)\ast S^0$ being the special case that $\Lambda$ is a triangle. 
	
	The key element of the proof is that given a connected bipartite graph there is an ordering on the vertices in one part such that the star of vertex intersects the star of a previous vertex in a non-empty set. 
	
	Let $W = \{w_1, \dots w_n\}$ be the vertices in $\Lambda'$ coming from the barycenters of edges in $\Lambda$. Assume that under this ordering the star of $w_i$ intersects the star of $w_j$ for some $j<i$. The set $V$ is the octahedralisation of $W$. We claim that the ordering $w_1^-, w_1^+, w_2^-, w_2^+, \dots, w_n^+$ is the desired ordering. 
	
	The link of each of the vertices is non-empty. Also the star of each vertex intersects the star of a previous vertex. Since we are taking the octahedralisation then we can see that $\lk(v_i, \Gamma)\cap \bigcup_{j<i} \st(v_j, \Gamma)$ is the suspension of a discrete set. This set contains at least two elements so will be connected and can be covered by loops of length 4 as desired. 
	
	The second statement follows since the branch is one part of the tripartite structure and the complex has no local cut points 
\end{proof}

We are now ready to prove that the ascending and descending links of these vertices will still be simply connected. 

\begin{thm}\label{thm:morsebranch}
	Let $\xbarcplx$ be the cube complex constructed above as a branched cover of $\xcplx$. Then there is an $S^1$ valued Morse function $f$ on ${\xbarcplx}$ such that the ascending and descending links are simply connected or $S(L')$. 
\end{thm}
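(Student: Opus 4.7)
The approach is to take $f$ to be the composition $\bar{f}$ of the Morse function on $\xcplx$ with the branched covering map $\xbarcplx \to \xcplx$. The original Morse function on $\xcplx$ is $\ZZ$-equivariant (it is built from orientations on the edges of each $A_i\ast B_i$ that record a height function to $\ZZ$), so passing to the quotient by the $\ZZ$ action yields an $S^1$-valued function, and this lifts back to $\xbarcplx$ via the covering. Away from $Y$ the branched covering is a local isometry and $\bar f$ is automatically Morse; on cubes meeting $Y$ affineness is preserved because the branched cover is built by lifting the piecewise Euclidean metric on $\xcplx\smallsetminus Y$ and completing.

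The bulk of the proof is verifying the statement about ascending and descending links, which I would handle case by case according to Table \ref{table:ascdesc}. For vertices of $\xbarcplx$ mapping to the two types $(a_1,a_2,a_3)$ or $(b_1,b_2,b_3)$, these lie off the branching locus, so a small open neighborhood lifts isometrically; the ascending and descending links in $\xbarcplx$ are therefore identical to those in $\xcplx$, giving $S(L')$ (the allowed exception) and $S^0\ast S^0\ast S^0 \simeq S^2$ (simply connected) respectively. For the remaining six vertex types I would show that the branched cover of the ascending/descending link is simply connected.

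For that last step, I would fix such a link $\Gamma$ in $\xcplx$, let $V\subset \Gamma$ denote the branch set, and let $\Delta\subset\Gamma$ be the subcomplex spanned by the non-branch vertices; then $\Gamma$ is obtained from $\Delta$ by coning off $\lk(v,\Gamma)$ for each $v\in V$, and the corresponding link in $\xbarcplx$ is obtained by the analogous coning of the chosen cover of $\Delta$. Apply Lemma \ref{lem:int4cycles} to order $V=\{v_1,v_2,\dots\}$ so that $\lk(v_i,\Gamma)\cap \bigcup_{j<i}\st(v_j,\Gamma)$ is connected and covered by $4$-cycles. By Remark \ref{rem:4loops}, every $4$-cycle in the covered graph has connected preimage, which forces each branch vertex to have exactly one preimage and forces the lifted intersections above to remain connected. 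Starting from $\Delta$, which is a join of nonempty discrete sets and hence $0$-connected, I would induct on $i$: at step $i$ we glue a cone (contractible) to the previously constructed space along the connected subcomplex described above, so by van Kampen the fundamental group is preserved and the whole union is simply connected. The main obstacle is this combinatorial bookkeeping on how the $4$-cycle condition forces connectivity of the intersections in the cover, but that is exactly what Lemma \ref{lem:int4cycles} and the labelling choices in Section \ref{sec:branched} were arranged to deliver.
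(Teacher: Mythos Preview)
Your approach is exactly the paper's: define $\bar f$ by composing with the branched covering map, observe that vertices over $(a_1,a_2,a_3)$ and $(b_1,b_2,b_3)$ are off the locus so their ascending/descending links are unchanged (yielding $S(L')$ and $S^0\ast S^0\ast S^0$), and for the remaining six types use Lemma~\ref{lem:int4cycles} together with Remark~\ref{rem:4loops} to control the branched cover of the link.

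There is one genuine slip in your inductive step. You write ``Starting from $\Delta$\ldots by van Kampen the fundamental group is preserved and the whole union is simply connected.'' Gluing a cone along a connected subspace does \emph{not} preserve the fundamental group; it quotients it. If you start from the cover of $\Delta$, which is a graph with nontrivial $\pi_1$, you cannot conclude simple connectivity this way. The correct induction (and what the paper is doing, if tersely) starts from the lifted star $\widetilde{\st}(v_1)$, which is a single cone and hence contractible. At step $i$ you attach the cone $\widetilde{\st}(v_i)$ along the preimage of $\lk(v_i,\Gamma)\cap\bigcup_{j<i}\st(v_j,\Gamma)$; Lemma~\ref{lem:int4cycles} says this is connected and covered by $4$-cycles, and Remark~\ref{rem:4loops} then guarantees its preimage is connected. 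Now van Kampen does give that simple connectivity is preserved. Finally, you should note that $\Gamma=\bigcup_i\st(v_i,\Gamma)$ (every simplex of the tripartite complex $\Gamma$ contains or is adjacent to a branch vertex), so the inductive union really exhausts the branched link. With those two fixes your argument is complete and coincides with the paper's.
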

\begin{proof}
	We need only check that we preserve simple connectedness in the ascending and descending links of vertices in $Y$. 
	
	Let $v$ be a vertex of $Y$. In $\lk(v, \xcplx)$ we are removing the branch set and taking a cover. 
	For each vertex $w$ in the branch set $\lk(w, \lk(v, \xcplx))$ is a graph which can be covered by loops of length 4. 
	We can order these loops of length 4 such that each successive loop has a non-trivial intersection with on of the previous loops. 
	The preimage of each of the loops of length 4 is a single loop. 
	Thus the preimage of $\lk(w, \lk(v, \xcplx))$ is a connected graph. 
	When we replace the preimages of $w$ we cone of this graph. 
	
	With the ordering of the vertices from \cref{lem:int4cycles} we are gluing a sequence of contractible sets along connected subspaces. The result of this procedure is simply connected. Thus the ascending and descending links are simply connected for any vertex in the branching locus. 
	
	For vertices not on the branching locus, the ascending and descending links remain unchanged. Thus they are all either simply connected or $S(L')$. 
\end{proof}

\begin{cor}\label{cor:fp2notf2}
	Let $L$ be a simplicial complex with nlcp whose fundamental group is perfect. Let $\xbarcplx$ be the cube complex constructed above with $\Gamma_B = S(S(L'))$. Then the kernel of $f_*$ is of type $FP_2$ but not finitely presented. 
\end{cor}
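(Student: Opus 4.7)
The plan is to combine Theorem \ref{thm:morsebranch} with the homological version of Bestvina--Brady Morse theory (Corollary \ref{cor1}(1)) for the $FP_2$ part, and with Lemma \ref{lem:retractlinks} for the failure of finite presentation. Passing to the universal cover $\widetilde{\xbarcplx}$, the $S^1$-valued Morse function $\bar{f}$ lifts to an $\RR$-valued Morse function equivariant with respect to the induced homomorphism $\bar{f}_*\colon \pi_1(\xbarcplx) \to \ZZ$; write $H := \ker(\bar{f}_*)$.

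For the $FP_2$ half, the perfectness of $\pi_1(L)$ gives $H_1(L) = 0$; since $L$ has nlcp so does $L'$, and $H_1(L') = H_1(L) = 0$, so Lemma \ref{lem:homologyofdouble} yields $H_1(S(L')) = 0$. Combined with Theorem \ref{thm:morsebranch}, every ascending and descending link of the lifted Morse function is a connected complex with trivial $H_1$. I would then apply Corollary \ref{cor1}(1) with $n=2$: each inclusion of level sets $X_I \hookrightarrow X_{I'}$ induces isomorphisms on $H_0$ and $H_1$. Passing to the direct limit over intervals exhausting $\RR$ and using that $\widetilde{\xbarcplx}$ is contractible, one concludes that $X_0$ is $1$-acyclic. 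Since $\bar{f}$ is $S^1$-valued on the compact cube complex $\xbarcplx$, the action of $H$ on $X_0$ is free, cellular, proper, and cocompact on skeleta, and the standard consequence of Brown's criterion (along the lines of Proposition \ref{propb}) yields that $H$ is of type $FP_2$.

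For the failure of finite presentation the plan is to verify the hypotheses of Lemma \ref{lem:retractlinks}. All ascending and descending links are connected, by the computation above. Pick $v \in \widetilde{\xbarcplx}$ projecting to a vertex of $\xbarcplx$ of type $(a_1, a_2, a_3)$; since such vertices are disjoint from the branching locus $Y$, the link $\lk(v, \widetilde{\xbarcplx})$ is isomorphic to $\Gamma_B = S(S(L'))$, and from Table \ref{table:ascdesc} the ascending link is exactly $S(L')$. The retraction $S(S(L')) \to S(L')$ from the remark following the octahedralisation definition (collapse each outer $S^0_u$ to its corresponding vertex $u \in S(L')$) witnesses the retraction hypothesis. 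The same remark produces a retraction $S(L') \to L'$, so $\pi_1(S(L'))$ surjects onto $\pi_1(L') = \pi_1(L) \neq 0$, using that $\pi_1(L)$ is a non-trivial perfect group. Lemma \ref{lem:retractlinks} then gives that $H$ is not finitely presented.

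The main point that needs care is the homological upgrade of Theorem \ref{bbmorse}: rather than simply connected links yielding $F_2$, here homologically $1$-connected links must be made to yield $FP_2$, requiring the direct limit argument on level sets and Brown's criterion applied to $X_0$ with its cocompact skeleta. Once that is in place, the identification of the relevant link and ascending link at a vertex of type $(a_1, a_2, a_3)$ is a direct table lookup, and the retractions used are the canonical ones coming from octahedralisation.
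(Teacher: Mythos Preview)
Your proposal is correct and follows essentially the same line as the paper. The paper's own proof is extremely terse---it invokes Lemma \ref{lem:retractlinks} and leaves the $FP_2$ direction entirely implicit---so your argument is in fact a more honest write-up: you spell out that $H_1(S(L'))=0$ via Lemma \ref{lem:homologyofdouble}, that every ascending and descending link is therefore $1$-acyclic by Theorem \ref{thm:morsebranch}, and then use the homological Morse theory (Corollary \ref{cor1}(1)) plus Brown's criterion on the cocompact level set to get $FP_2$; for the failure of finite presentation you apply Lemma \ref{lem:retractlinks} exactly as intended, identifying the required retraction $S(S(L'))\to S(L')$ and the non-vanishing of $\pi_1(S(L'))$ via the further retraction onto $L'$.

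Two small remarks. First, the phrase ``cocompact on skeleta'' is unnecessary: the level set $X_0$ (or better $X_{1/2}$, to avoid vertices) is already a finite-dimensional polyhedral complex with finitely many $H$-orbits of cells, so the action is genuinely cocompact and the usual free-resolution argument applies directly. Second, both you and the paper silently assume $\pi_1(L)$ is \emph{non-trivial} perfect; the corollary as stated omits this, but without it the ``not finitely presented'' conclusion would fail.
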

\begin{proof}
	By Lemma \ref{lem:retractlinks} that the finiteness properties of the kernel are controlled by the homotopy of $S(L')$. Thus if $L'$ has non-trivial perfect fundamental group, then the kernel of $f_*$ is of type $FP_2$ but not finitely presented.
\end{proof}

\begin{rem}
	If $L$ is simply connected, then the above kernel is finitely presented but not of type $FP_3$. 
\end{rem}

\subsection{The cover is hyperbolic}

We are now left to prove that $\pi_1(\xbarcplx)$ is hyperbolic. We appeal to Theorem \ref{thm:flatplane}. To show that there is no flat plane in the universal cover we use the fly maps from \cref{def:fly.map}. The technique is similar to the techniques used in \cite{brady_branched_1999,kropholler_finitelypresented_2018,lodha_hyperbolic_2017other}

Let $\xtilde$ be the universal cover of $\xtilde$. Begin by assuming for a contradiction that $i\colon \EE^2\to \xtilde$ is an isometric embedding of a flat plane. 

\begin{rem}\label{rem:flymapex}
	The cubical complex $\xcplx$ has 3 directions coming from the tripartite structure of $\Gamma_A$ and $\Gamma_B$. 
	There is a choice of 3 directions on $\xtilde$ given by pulling back the partition of hyperplanes from $\xcplx$. 
	Thus we obtain fly maps $f^x\colon \xtilde\to \RR^3$ for each point $x$ on $i(\EE^2)$.
\end{rem}

Recall the following definition from \cite{brady_branched_1999}.

\begin{definition}
	Given a CAT(0) cube complex $X$ and an isometric embedding $i\colon \EE^2\to X$, we say that a subset $D$ of $X$ intersects $\EE^2$ {\em transversally} at a point $p$ if there is an $\epsilon>0$ such that $N_{\epsilon}(p)\cap D\cap i(\EE^2) = \{p\}$.
\end{definition}

Let $\overline{Y}$ be the preimage of the branching locus under the branched covering map. 

\begin{theorem}\label{thm:transinter}
	There is a transverse intersection point of $i(\EE^2)$ and $\overline{Y}$. 
\end{theorem}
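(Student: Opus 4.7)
\begin{sketch}
My plan is to argue by contradiction in three steps, assuming $i(\EE^2)$ has no transverse intersection with $\overline{Y}$.

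The first step is to use the fly map to force $i(\EE^2)$ to be aligned with exactly two of the three hyperplane direction classes of $\xbarcplx$. Fix a fly map $f^x\colon \xbarcplx\to \RR^3$ relative to $i(\EE^2)$, using the $3$-directions structure inherited from the product $\prod_{i=1}^3 A_i\ast B_i$. At each vertex $v\in i(\EE^2)$, Corollary~\ref{cor:flylinks} gives an embedding
\[
\lk(v,\CN(i(\EE^2)))\hookrightarrow \SS^0\ast \SS^0\ast \SS^0,
\]
whose target is the octahedral triangulation of the round $2$-sphere. Since $i$ is an isometric embedding, the loop cut out on this link by the plane is a closed geodesic of length $2\pi$ supported on the $1$-skeleton: a great-circle combinatorial $4$-cycle. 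A direct spherical-geometry check shows the only great-circle $4$-cycles in the octahedron are the three coordinate ones, each using all four vertices of exactly two of the three $\SS^0$ factors. Thus at every vertex the $2$-cubes of the plane incident to $v$ use exactly two of the three hyperplane direction classes, and by connectedness of $\EE^2$ the same pair is used globally; after relabeling, take them to be the first two.

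The second step is to locate a plane vertex whose projection to $\xcplx$ has a prescribed combinatorial type. Because the plane uses only the first two directions, the third coordinate $v_3\in A_3\cup B_3$ of the $\xcplx$-projection is constant on $i(\EE^2)$. Each $2$-cube of the plane projects to a square whose four corners realize every combination of ($A_1$ or $B_1$) with ($A_2$ or $B_2$), so the plane passes through a vertex $w\in \xbarcplx$ whose projection to $\xcplx$ has the form $(b_1,a_2,v_3)$ with $b_1\in B_1$ and $a_2\in A_2$. The third step is then to produce the transverse intersection at $w$: by the definition of the branching locus, $(b_1,a_2,v_3)$ lies in the component $B_1\times A_2\times (A_3\ast B_3)$ of $Y$, so at least one direction-$3$ edge $e\subset Y$ is incident to it, and its lift $\overline{e}\subset \overline{Y}$ through $w$ is a direction-$3$ edge perpendicular to the plane. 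Since $i(\EE^2)$ contains no direction-$3$ edges, $i(\EE^2)\cap \overline{e}=\{w\}$ in any small cubical neighbourhood of $w$; the other two components of $Y$ would require $w$'s first coordinate to lie in $A_1$ or its second in $B_2$, which it does not. Hence no other edge of $\overline{Y}$ meets $i(\EE^2)$ near $w$, so $w$ is a transverse intersection point of $i(\EE^2)$ and $\overline{Y}$, contradicting the hypothesis.

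The most delicate step I expect is the first: justifying that a closed geodesic in the octahedral $2$-sphere supported on the $1$-skeleton must be a coordinate $4$-cycle (any non-coordinate $4$-cycle mixes all three factors, and any such cycle fails to lie on a great circle). Once that combinatorial fact is in hand, the rest is bookkeeping within the explicit form of $Y$ from Section~\ref{sec:branched}, which was chosen precisely so that any flat plane avoiding one of the three direction classes is forced to cross the matching branching component transversely.
\end{sketch}
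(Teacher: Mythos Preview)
Your Step~1 contains a genuine gap. You assert that at a vertex $v\in i(\EE^2)$ the loop cut out by the plane in $\lk(v,\CN(i(\EE^2)))\hookrightarrow \SS^0\ast\SS^0\ast\SS^0$ is a closed geodesic \emph{supported on the $1$-skeleton}, and hence a combinatorial $4$-cycle. This is not justified, and it is false in general. An isometrically embedded flat plane in a $3$-dimensional CAT(0) cube complex can meet a $3$-cube in a tilted affine slice; the model case is the plane $x+y+z=0$ inside $\RR^3$ with its standard cubulation. At the origin the link circle is the great circle orthogonal to $(1,1,1)$, which passes through midpoints of edges of the octahedron, not along its $1$-skeleton. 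Corollary~\ref{cor:flylinks} only gives a simplicial embedding of $\lk(v,\CN(i(\EE^2)))$ into the octahedron; it says nothing about where the geodesic circle of the plane sits inside that subcomplex. Consequently your conclusion that $i(\EE^2)$ globally uses only two of the three hyperplane directions is unsupported, and Steps~2--3 (which are fine on their own) never get off the ground. You also implicitly assume the plane contains a vertex, which needs its own argument.

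The paper's proof proceeds quite differently and does not attempt to force the plane to be axis-parallel. It first locates some intersection point of $i(\EE^2)$ with $\overline{Y}$ by developing the plane through a $3$-cube, then argues case by case: interior-of-edge intersections are automatically transverse; intersections at vertices of four of the six ``mixed'' types are transverse because the branching edges there lie in the join factor that a locally flat plane cannot enter; and the remaining cases, where the plane actually contains an edge of $\overline{Y}$, are handled by following the one-parameter family of affine planes through that edge in a cube and showing all but one such plane hit a transverse point, with the exceptional plane forcing a further edge of $\overline{Y}$ into $i(\EE^2)$ and a repeat of the analysis in an adjacent cube. After at most three such steps one reaches a cube in which every admissible affine plane hits a transverse point. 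This case chase is exactly what is needed to handle tilted planes, which your argument rules out prematurely.
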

\begin{proof}
	We start by finding at least one point in $i(\EE^2)\cap \overline{Y}$. To find such a point, note that there is a cube $c$ in which $i(\EE^2)$ has 2-dimensional intersection. This intersection can be seen as the intersection of an affine plane in $\RR^3$ with a cube in the standard cubulation. The branching locus in this cube is depicted in Figure \ref{fig:branchcube}. If $i(\EE^2)$ does not intersect $\overline{Y}$ in $c$, then it intersects $c$ in the neighbourhood of a vertex $v$ disjoint from the branching locus. Using the fly map from \cref{rem:flymapex}, we see by \cref{cor:flylinks}, that $\lk(v, \CN(\EE^2))$ is contained in $S^0\ast S^0\ast S^0$. We can now develop into some of 8 cubes around this vertex. These 8 cubes form together as in Figure \ref{fig:8branch}, however we note that not all these cubes need exist. Within these 8 cubes there is an intersection with the branching locus. 
	
	\begin{figure}
		\centering
		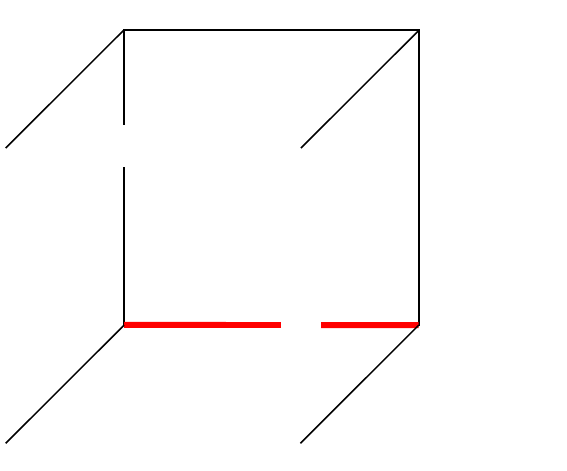
		\caption{The branching locus in one cube. The edges of the branching locus are depicted in red. }
		\label{fig:branchcube}
	\end{figure}
	
	\begin{figure}
		\centering
		\def\svgwidth{70mm}
		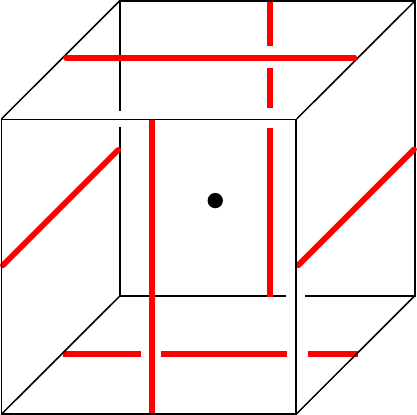
		\caption{The union of 8 cubes with the branching locus shown in bold.}
		\label{fig:8branch}
	\end{figure}
	
	In one of these 8 cubes $i(\EE^2)$ has a single point of intersection with $\overline{Y}$. We denote this cube by $c$. Note that if this point is not a transverse point of intersection, then there is an edge of $\overline{Y}$ contained in $i(\EE^2)$. 
	
	Since the intersection with $c$ is that of an affine plane. We can see that if the intersection point is in the interior of an edge, then it is a transverse intersection. 
	
	We are now left in the case that the intersection point is a vertex of the cube $c$. We begin with the case that the intersection point is a vertex which maps to $(a_1, b_2, a_3), (b_1, a_2, b_3), (a_1, b_2, b_3)$ or $(b_1, b_2, a_3)$. The link of any of these vertices is a join of a graph and a discrete set. The vertices in the discrete set correspond to $\overline{Y}$. Thus if $i(\EE^2)$ contains an edge $e$ of $\overline{Y}$ at the vertex in question, then there is a cube $c'$ sharing a face with $c$ which contains the edge $e$. This cannot happen as $i$ is an isometric embedding. 
	
	Let $O$ be the interior points of edges together with vertices mapping to $(a_1, b_2, a_3), (b_1, a_2, b_3), (a_1, b_2, b_3)$ or $(b_1, b_2, a_3)$. We will show that $i(\EE^2)$ has a point of intersection $O$. This will complete the proof by the above.  
	
	We are now reduced to the case of studying planes which contain a full edge of $\overline{Y}$ and the intersection with $c$ is not contained in $O$. The remainder of the proof is summarised in Figure \ref{fig:planeswithbranchedges}.
	
	\begin{figure}
		\centering
		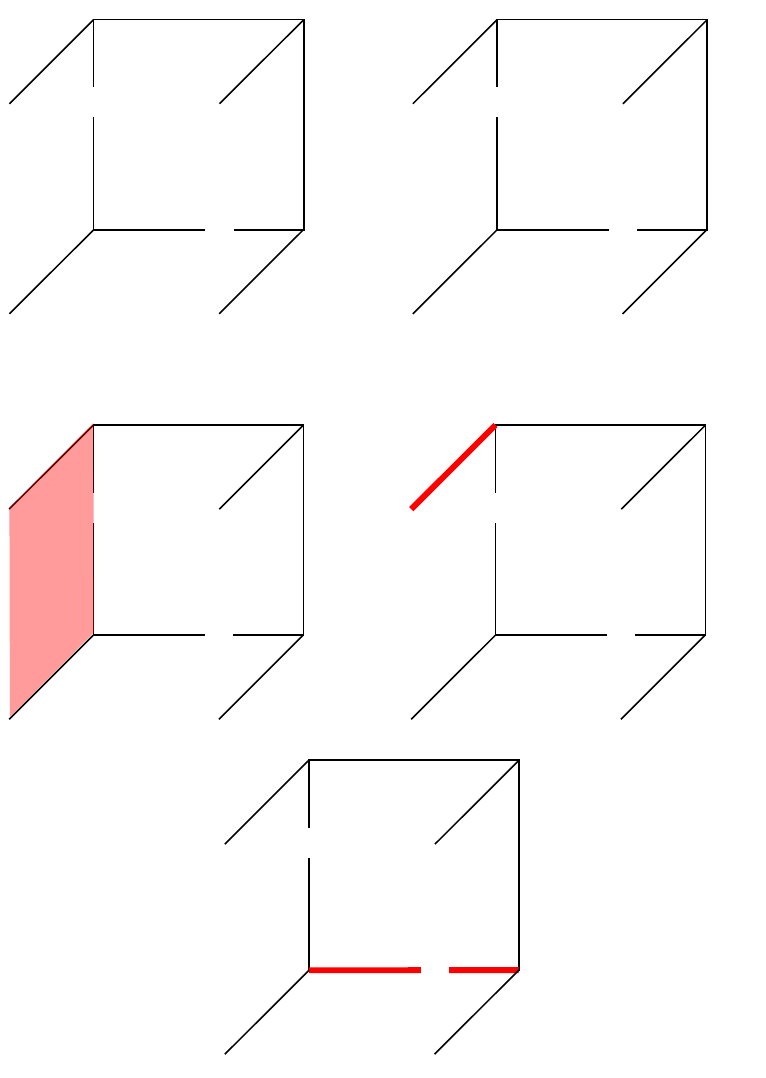
		\caption{This figure depicts the final stages of the proof of hyperbolicity. In each stage the red part of the picture is contained in the plane. In the final figure we find that containing the red line forces the plane to contain points of $O$.}
		\label{fig:planeswithbranchedges}
	\end{figure}
	
	We begin with the case of an edge between vertices that map to $(a_1, b_2, a_3)$ and $(b_1, b_2, a_3)$. There is a continuous family of embedded planes in the cube which contain this edge. However, all but one of the planes in this family intersect the cube in a point contained in $O$, this gives a transverse intersection point. 
	
	The exceptional plane is the plane containing a square with vertices mapping to $(b_1, a_2, a_3), (a_1, a_2, a_3), (a_1, b_2, a_3)$ and $(b_1, b_2, a_3)$. This intersects the edge with vertices mapping to $(b_1, a_2, a_3)$ and $(b_1, a_2, b_3)$ in the vertex mapping to $(b_1, a_2, a_3)$. 
	
	If this is not a transverse intersection point, then there is an adjacent cube in which $i(\EE^2)\cap\overline{Y}$ contains an edge of $\overline{Y}$ with vertices mapping to $(b_1, a_2, a_3)$ and $(b_1, a_2, b_3)$. 
	
	Once again there is a continuous family of flat planes containing this edge and all of them, except one, intersect in a point in $O$, giving a transverse intersection point. 
	
	The exceptional plane is the plane containing a square with vertices mapping to $(b_1, a_2, a_3), (a_1, a_2, a_3), (b_1, a_2, b_3)$ and $(a_1, a_2, b_3)$. This intersects the edge with vertices mapping to $(a_1, a_2, b_3)$ and $(a_1, b_2, b_3)$ in the vertex mapping to $(a_1, a_2, b_3)$. If this is not a transverse intersection point, then there is an adjacent cube in which $i(\EE^2)\cap\overline{Y}$ contains an edge of $\overline{Y}$ with vertices mapping to $(a_1, a_2, b_3)$ and $(a_1, b_2, b_3)$.
	
	There is a continuous family of flat planes in this cube which contain an edge with vertices mapping to $(a_1, a_2, b_3)$ and $(a_1, b_2, b_3)$. All of these flat planes will intersect in the cube in a point in $O$. This gives us a transverse point of intersection.
\end{proof}

\begin{lem}\label{lem:notransinter}
	There cannot be a transverse intersection of $i(\EE^2)$ and the preimage of the branching locus. 
\end{lem}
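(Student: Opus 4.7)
The plan is to assume a transverse intersection point $p$ exists and to derive a contradiction using the fly map constraint of Corollary \ref{cor:flylinks} together with the degree of the branched cover. First I would pin down the local geometry: since each summand of $Z$ is a graph crossed with two discrete sets, the branching locus $Y$ and hence $\overline{Y}$ is a $1$-dimensional subcomplex of the $3$-dimensional complex $\xbarcplx$. The generic situation is therefore that $p$ lies in the interior of an edge $\bar{e}$ of $\overline{Y}$; the case of $p$ being a vertex of $\overline{Y}$ can be handled similarly but requires separate bookkeeping.

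In the edge case I would proceed as follows. Let $D \subset \EE^2$ be a small disk centred at $i^{-1}(p)$. By transversality, $i(D) \cap \overline{Y} = \{p\}$, so the loop $i(\partial D)$ lies entirely in $\xbarcplx \smallsetminus \overline{Y}$, which is a finite cover of $\xcplx \smallsetminus Y$. The projection $\pi\circ i(\partial D)$ is then a loop in $\xcplx \smallsetminus Y$ that links the edge $\pi(\bar{e})$ algebraically once, because $i(D)$ meets $\pi(\bar{e})$ transversely at a single point. Under the deformation retraction onto the graph $\Lambda_{ij}$ (for the pair $(i,j)$ indexing the summand of $Z$ containing $\bar{e}$), this meridian becomes a generator whose image under $\phi_{ij}$ has order $q_{ij} \geq 2$. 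Hence $\pi\circ i(\partial D)$ cannot lift to a loop, contradicting the existence of $i(\partial D)$.

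An equivalent link-theoretic packaging of the same argument, which may be the cleanest way to write it, is this: the circle cut out in $\lk(p, \CN(i(\EE^2)))$ by the plane $i(\EE^2)$ maps, via the branched covering, to the transverse $4$-cycle around $\pi(\bar{e})$ in $\xcplx$, but as a $q_{ij}$-fold cover (the branched cover was set up so that every loop of length $4$ encircling the branching locus has connected preimage, cf.\ Remark \ref{rem:4loops}). Thus the circle has combinatorial length at least $4 q_{ij}$. However, by Corollary \ref{cor:flylinks} this link embeds in $\SS^0 \ast \SS^0 \ast \SS^0$, whose cycles have length at most $6$. Since $q_{ij}$ was chosen to exceed the valence of any vertex in $A_j \ast B_j$, in particular $q_{ij} \geq 3$, we have $4 q_{ij} > 6$, a contradiction.

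The main obstacle will be the vertex case: when $p$ is a vertex of $\overline{Y}$ several branch edges meet at $p$, and the local ramification combines the three representations $\phi_{12}, \phi_{23}, \phi_{31}$ simultaneously. The strategy is the same—identify the homotopy class of $\pi\circ i(\partial D)$ as a meridian of the branching locus and observe it is non-trivial under at least one $\phi_{ij}$—but one must choose the right projection $\rho_k$ to read off the non-trivial winding, which amounts to checking that for any affine plane passing transversely through such a vertex, its intersection with the union of the $8$ adjacent cubes contains a loop about the branching locus that projects non-trivially under some $\phi_{ij}$.
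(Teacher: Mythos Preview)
Your third paragraph is essentially the paper's argument, but the order of reasoning is inverted in a way that leaves a gap. You assert that the link circle projects to a transverse $4$-cycle in $\xcplx$ and then multiply by the ramification to get length $4q_{ij}$ upstairs. The first step is unjustified: the flat plane lives in $\xbarcplx$, so there is no fly map available \emph{downstairs} to force the image to be a $4$-cycle. The paper stays upstairs throughout: Corollary~\ref{cor:flylinks} applied directly to the universal cover of $\xbarcplx$ embeds $\lk\big(p,\CN(i(\EE^2))\big)$ into $\SS^0\ast\SS^0\ast\SS^0$, and since transversality forces the link circle to miss the branch direction, the circle lands in $\SS^0\ast\SS^0$ and is therefore a $4$-cycle in the transverse bipartite part of $\lk(p,\xbarcplx)$. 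Remark~\ref{rem:4loops} then gives the contradiction in one line. No length estimate $4q_{ij}$ versus $6$ is needed; the point is simply that the cover was built to have \emph{no} $4$-cycles in that bipartite graph.

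Your second paragraph, the meridian argument, has a genuine error. You claim that $\pi\circ i(\partial D)$ links $\pi(\bar e)$ algebraically once, but this ignores that $\pi$ is \emph{ramified} at $p$: in local coordinates the cover is $z\mapsto z^r$ transverse to $\bar e$, so $\pi\circ i(\partial D)$ is the meridian traversed $r$ times, where $r$ is the ramification index. Since the meridian has monodromy a $q_{ij}$-cycle and $r=q_{ij}$, the $r$-th power is trivial on the relevant sheet, so the loop \emph{does} lift and no contradiction arises by this route.

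Finally, the vertex case you flag as the main obstacle is not one. The paper's two-line proof handles edges and vertices uniformly: at a vertex $p$ on $\overline Y$, transversality says the link circle of the plane avoids the branch-direction vertices, so it lies in the transverse bipartite subgraph; the fly map forces it to be a $4$-cycle there, and Remark~\ref{rem:4loops} applies exactly as in the edge case. No separate analysis of the three representations is required.
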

\begin{proof}
	Assume that there is a transverse intersection point $p$. By Corollary \ref{cor:flylinks} there is an inclusion of $\lk(p, \CN(i(\EE^2))\to S^0\ast S^0\ast S^0.$ However, this would give a loop of length 4 in the transverse direction which contradicts the choice of branched cover cf. Remark \ref{rem:4loops}. 
\end{proof}

Combining Lemma \ref{lem:notransinter} and Theorem \ref{thm:transinter} we reach our desired contradiction. Together with Theorem \ref{thm:flatplane} we arrive at the following. 

\begin{thm}\label{thm:hypbranch}
	Let $\xbarcplx$ be one of the branched covers constructed in \ref{sec:branched}. Then $\pi_1(\xbarcplx)$ is a hyperbolic group. 
\end{thm}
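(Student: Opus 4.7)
The plan is essentially to assemble the machinery already developed in the section, since the substantive technical content sits in Theorem \ref{thm:transinter} and Lemma \ref{lem:notransinter}. First I would verify the hypotheses of Bridson's flat plane theorem (Theorem \ref{thm:flatplane}): the branched cover $\xbarcplx$ is a compact non-positively curved cube complex by Lemma \ref{npcbranch}, applied to the branching locus $Y = Z \cap \xcplx$, whose local-isometry and codimension-$2$ hypotheses are already verified at the start of Section \ref{sec:branched}. In particular $\xbarcplx$ is a compact CAT(0) cube complex after passing to the universal cover $\widetilde{\xbarcplx}$.

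Next I would argue by contradiction: suppose $\pi_1(\xbarcplx)$ is \emph{not} hyperbolic. Then by Theorem \ref{thm:flatplane} there exists an isometric embedding $i\colon \EE^2 \hookrightarrow \widetilde{\xbarcplx}$ of a flat plane. Lifting the branching locus $Y$ to the universal cover (note $Y$ lifts since it lifts to $\xbarcplx$, and then further to the universal cover) produces the subcomplex $\overline{Y}$ appearing in the statements of Theorem \ref{thm:transinter} and Lemma \ref{lem:notransinter}.

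Now I would simply combine these two results: Theorem \ref{thm:transinter} asserts that such a flat plane $i(\EE^2)$ must meet $\overline{Y}$ transversally at some point, while Lemma \ref{lem:notransinter} asserts that no such transverse intersection can exist. This is the required contradiction, so no isometric embedding of $\EE^2$ into $\widetilde{\xbarcplx}$ exists, and hence $\pi_1(\xbarcplx)$ is hyperbolic by Theorem \ref{thm:flatplane}.

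There is no real obstacle here; the entire weight of the argument has been packaged into the two preceding results, whose compatibility (in particular, that they concern the same notion of ``preimage of the branching locus'' inside the universal cover rather than just inside $\xbarcplx$) is the only thing worth double-checking. The only subtle point is ensuring that the transverse-intersection analysis of Theorem \ref{thm:transinter}, which was phrased for $i(\EE^2) \subset \xbarcplx$, applies equally to the universal cover --- but since the arguments there are entirely local (they take place inside a single cube and its neighbours, and use only the local link structure carried over via the covering map), they transfer verbatim to $\widetilde{\xbarcplx}$. The conclusion then follows in one short paragraph.
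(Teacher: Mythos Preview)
Your proposal is correct and matches the paper's own proof essentially verbatim: the paper simply says that combining Lemma~\ref{lem:notransinter} and Theorem~\ref{thm:transinter} yields a contradiction, and then invokes Theorem~\ref{thm:flatplane}. Your additional remarks about compactness via Lemma~\ref{npcbranch} and about the local nature of the transverse-intersection argument in the universal cover are reasonable elaborations of points the paper leaves implicit.
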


Putting all of this together we have the following theorem. 

\begin{thmm}
	There exists a hyperbolic group $G$ such that $G = H\rtimes \ZZ$ and $H$ is of type $FP_2$ but not finitely presented.  
\end{thmm}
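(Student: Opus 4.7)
The proof is essentially an assembly of the pieces developed throughout Section 3. The plan is as follows.

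First, I would choose a 2-dimensional simplicial complex $L$ with no local cut points whose fundamental group is non-trivial and perfect; since $\pi_1(L)$ perfect implies $H_1(L) = 0$, this makes the hypothesis of Lemma \ref{lem:homologyofdouble} available and keeps $L$ within the class required by Corollary \ref{cor:fp2notf2}. A concrete candidate is a triangulation of the presentation 2-complex of the binary icosahedral group $\langle a, b \mid a^5 = b^3 = (ab)^2\rangle$. If the chosen triangulation fails to have nlcp, I would pass to the mapping cylinder of the inclusion of its 1-skeleton, as described after Lemma \ref{lem:homologyofdouble}, which preserves the homotopy type and in particular preserves both the fundamental group and the vanishing of $H_1$.

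Second, I would set $\Gamma_A = V_4 \ast V_4 \ast V_4$ and $\Gamma_B = S(S(L'))$, form the cube complex $\xcplx$ as in Section 2, and carry out the branched cover construction of Section \ref{sec:branched} to produce $\xbarcplx$. Setting $G = \pi_1(\xbarcplx)$, Theorem \ref{thm:hypbranch} gives that $G$ is hyperbolic. Now let $\bar f$ be the $S^1$-valued Morse function on $\xbarcplx$ supplied by Theorem \ref{thm:morsebranch}, and let $\phi = \bar f_* : G \to \ZZ$ be the induced homomorphism. Because every ascending and descending link is either simply connected or a copy of $S(L')$, and because $S(L')$ retracts onto $L'$ and $\pi_1(L')$ is non-trivial (and perfect), Corollary \ref{cor:fp2notf2} applies directly and tells us that $H := \ker(\phi)$ is of type $FP_2$ but not finitely presented.

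Finally, to obtain the semidirect product structure, I would observe that $\phi$ has infinite cyclic image (replacing $\phi$ by $\phi/\gcd(\operatorname{im}\phi)$ if necessary to make it surjective), so $G$ fits into a short exact sequence $1 \to H \to G \to \ZZ \to 1$; since $\ZZ$ is free, this sequence splits and $G = H \rtimes \ZZ$. The only non-routine step is selecting $L$ so that all hypotheses—nlcp, perfect non-trivial $\pi_1$, and 2-dimensionality—hold simultaneously; once this is done, the hyperbolicity of $G$ and the finiteness behaviour of $H$ are direct appeals to Theorem \ref{thm:hypbranch} and Corollary \ref{cor:fp2notf2} respectively.
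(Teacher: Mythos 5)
Your proposal is correct and follows the same route as the paper: the paper's proof is precisely the two-step appeal to Theorem \ref{thm:hypbranch} for hyperbolicity of $\pi_1(\xbarcplx)$ and to Corollary \ref{cor:fp2notf2} for the kernel being of type $FP_2$ but not finitely presented. Your additional details (a concrete choice of $L$ with perfect non-trivial fundamental group, the mapping-cylinder fix for nlcp, and the splitting of the exact sequence over $\ZZ$) simply make explicit what the paper leaves implicit.
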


\begin{proof}
	The fundamental group of $\xbarcplx$ is hyperbolic by Theorem \ref{thm:hypbranch}. Also by Corollary \ref{cor:fp2notf2}, this hyperbolic group has a subgroup which is of type $FP_2$ but not finitely presented. 
\end{proof}

\section{Uncountably many groups of type $FP_2$ which do not contain $\ZZ^2$}

We begin by defining an invariant similar to that of \cite{leary_uncountably_2015other}.

\begin{definition}
	Let $T$ be a set of words in $x_1,  \dots, x_n$. Let $G$ be a group and $S = (g_1, \dots, g_n)$ an $n$-tuple of elements in $G$.
	
	Define $$\mathcal{R}(G, S, T) = \{r\in T\mid r(S) =  1\mbox{ in }G\}.$$
\end{definition}

\begin{prop}\label{prop:countablymany}
	For a fixed set $T$ and fixed countable $G$. The invariant $\mathcal{R}(G, S, T)$ can only take countably many values. 
\end{prop}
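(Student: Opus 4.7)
The proof is essentially a counting argument, and the key observation is that $\mathcal{R}(G,S,T)$ is determined entirely by the choice of $S$ once $G$ and $T$ are fixed. So the plan is to note that the assignment $S\mapsto \mathcal{R}(G,S,T)$ is a function from $G^n$ to the power set $\mathcal{P}(T)$, and then to bound the cardinality of the image.

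First, I would observe that $S$ ranges over $n$-tuples of elements of $G$, i.e.\ over $G^n$. Since $G$ is countable and $n$ is a fixed positive integer, $G^n$ is a countable set. Second, I would note that for each fixed $S\in G^n$, the value $\mathcal{R}(G,S,T)\subseteq T$ is a single well-defined subset of $T$, since whether $r(S)=1$ in $G$ is a definite condition for each word $r\in T$. Therefore the set $\{\mathcal{R}(G,S,T) : S\in G^n\}$ is the image of $G^n$ under the map $S\mapsto \mathcal{R}(G,S,T)$, and so its cardinality is at most $|G^n|$, which is countable.

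There is really no obstacle here; the statement is purely set-theoretic once one parses the definitions. The content of the result only becomes substantive when it is combined in the next section with a construction producing uncountably many non-isomorphic groups $H(Z)$ realised as $\langle S\mid Z\rangle$ for varying $Z\subset U$: the proposition will then be used to argue that each isomorphism class of $H(Z')$ can contain at most countably many $Z$, forcing uncountably many distinct isomorphism classes among the $H(Z)$.
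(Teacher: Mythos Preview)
Your proof is correct and follows exactly the same approach as the paper: since $G$ is countable there are only countably many $n$-tuples $S\in G^n$, and hence only countably many possible values of $\mathcal{R}(G,S,T)$. The paper's proof is the same two-line counting argument.
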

\begin{proof}
	There are only countably many $n$-tuples of group elements. Thus once $G$ and $T$ are fixed, we only have countably many possibilities. 
\end{proof}

Now, set $L$ to be a complex with nlcp such that $\pi_1(L) = A_5$. 

We apply the proof of the preceding section to obtain a non-positively cube complex $X$ such that $\pi_1(X)$ is hyperbolic and contains a subgroup $H$ of type $FP_2$ which is not finitely presented. 

Let $\tilde{X}$ be the universal cover of $X$ and $\bar{X}$ be the cover of $X$ corresponding to the subgroup $H$. 

The Morse function on $X$ gives a real valued Morse function on $\bar{X}$. All of the ascending and descending links are connected. Hence the inclusion of $\bar{X}_{\frac{1}{2}}\to \bar{X}$ gives a surjection on the level of fundamental groups. Since $\bar{X}_{\frac{1}{2}}$ is a compact space we see that its fundamental group is finitely presented. Let $P = \langle S\mid R\rangle$ be a presentation for $\pi_1(\bar{X}_{\frac{1}{2}})$, as stated $S$ is also a generating set for $H$.

All the vertices of $\bar{X}$ map to integers thus $\bar{X}_{\frac{1}{2}}$ contains no vertices of $\bar{X}$. Let $V$ be the set of vertices in $\bar{X}$. 

\begin{lem}
	The inclusion $\bar{X}_{\frac{1}{2}}\to \bar{X}\smallsetminus V$ induces an isomorphism on fundamental groups. 
\end{lem}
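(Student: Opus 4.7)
The strategy is to decompose $\bar{X}\smallsetminus V$ into open pieces, identify each with a level set via Bestvina--Brady Morse theory, and reassemble using Seifert--van Kampen.

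For each $n\in\ZZ$ and a small $\epsilon>0$, set
\[
  C_n \;=\; \bar{X}_{(n+\frac{1}{2}-\epsilon,\,n+\frac{3}{2}+\epsilon)}\;\smallsetminus\;(V\cap f^{-1}(n+1)).
\]
Since every vertex of $\bar{X}$ maps to an integer, this removes exactly the vertices of $\bar{X}$ in the relevant $f$-range, so $\{C_n\}_{n\in\ZZ}$ is an open cover of $\bar{X}\smallsetminus V$. Consecutive overlaps equal $\bar{X}_{(n+\frac{3}{2}-\epsilon,\,n+\frac{3}{2}+\epsilon)}$, contain no vertices, and deformation retract onto $\bar{X}_{n+\frac{3}{2}}$; non-consecutive pieces are disjoint for $\epsilon<\frac{1}{2}$.

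I would next show $C_n\simeq\bar{X}_{n+\frac{1}{2}}$. Apply Theorem \ref{homoeq} with $I=\{n+\frac{1}{2}\}$ and $I'=[n+\frac{1}{2},n+\frac{3}{2}]$: since $\min I=\min I'$, this identifies $\bar{X}_{I'}$ up to homotopy with $\bar{X}_{n+\frac{1}{2}}$ together with, for each vertex $v$ at level $n+1$, a cone on $\lk_{\downarrow}(v,\bar{X})$ attached along its inclusion into $\bar{X}_{n+\frac{1}{2}}$ with apex $v$. The Bestvina--Brady deformation retraction realising this equivalence is a gradient-like flow whose only fixed points in the added region are the vertices at level $n+1$; in particular, removing these vertices on both sides is compatible, converting each closed cone into an open mapping cylinder $\lk_{\downarrow}(v)\times[0,1)$ that further retracts onto its base $\lk_{\downarrow}(v)\subset\bar{X}_{n+\frac{1}{2}}$. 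The outer $\epsilon$-collars contain no vertices and collapse onto their boundaries, yielding a deformation retraction $C_n\to\bar{X}_{n+\frac{1}{2}}$ that restricts to the identity on $\bar{X}_{n+\frac{1}{2}}$.

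Now apply Seifert--van Kampen to $\bar{X}\smallsetminus V=\bigcup_n C_n$. Each $\pi_1(C_n)\cong\pi_1(\bar{X}_{n+\frac{1}{2}})$ via the lower-boundary inclusion, and each overlap satisfies $\pi_1(C_n\cap C_{n+1})\cong\pi_1(\bar{X}_{n+\frac{3}{2}})$. Translation by any $t\in G$ with $\phi(t)=1$ gives compatible identifications of all level-set fundamental groups with $\pi_1(\bar{X}_{\frac{1}{2}})$, and under these both maps $\pi_1(C_n\cap C_{n+1})\to\pi_1(C_n),\pi_1(C_{n+1})$ become isomorphisms. The iterated amalgamated product therefore collapses to a single copy of $\pi_1(\bar{X}_{\frac{1}{2}})$, and by construction the resulting isomorphism is the one induced by the inclusion $\bar{X}_{\frac{1}{2}}\hookrightarrow\bar{X}\smallsetminus V$.

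The main obstacle is the compatibility used in the middle paragraph: verifying that the Bestvina--Brady deformation retraction can be chosen so that the new vertices are fixed, so that deletion on both sides matches up. Once this is checked by direct inspection of the BB gradient-flow construction (whose only critical points in the strip $\bar{X}_{(n+\frac{1}{2},n+\frac{3}{2})}$ are the vertices at level $n+1$), the remaining van Kampen assembly is routine.
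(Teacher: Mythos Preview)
Your slab-by-slab van Kampen approach is different from the paper's, which instead passes to the universal cover $U$ of $\bar X\smallsetminus V$, completes $U$ to a CAT(0) cube complex by filling in preimages of the deleted vertices (using that the induced cover of a link $S(S(L'))$ is $S(\widetilde{S(L')})$), and then applies Morse theory upstairs to see that the $\tfrac12$-level set in $U$ is connected and simply connected.

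The obstacle you flag is a genuine gap, and it is more serious than you suggest: the Bestvina--Brady equivalence $\bar X_{I'}\simeq \bar X_I\cup\bigcup_v C(\lk_\downarrow v)$ does \emph{not} in general survive deletion of the cone apices, and no ``direct inspection of the gradient flow'' can repair this. A small sphere about a deleted vertex $v$ inside $C_n$ is a copy of the \emph{full} link $\lk(v,\bar X)$, not just of $\lk_\downarrow(v)$; your open-cone retraction handles only the descending half, and the ascending half has nowhere to go once $v$ is removed. To see this concretely, run the same argument one dimension down: for $\RR^2$ with the standard cubulation and $f(x,y)=x+y$, each descending link at level $1$ is a contractible arc, so the line $\bar X_{1/2}$ with open descending cones attached is still contractible, whereas the slab $\bar X_{[1/2,3/2]}$ minus its vertices has infinitely generated free fundamental group. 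Thus there is no homotopy equivalence of the punctured spaces at all, whatever the fixed-point set of the flow may be. In the paper's setting the exceptional vertices have full link $S(S(L'))$, which is not simply connected when $\pi_1(L)\neq 1$, so exactly the same obstruction is present; resolving it requires control over how loops in these links sit inside the level set, which is precisely what the paper's covering-space argument supplies and what your outline does not.
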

\begin{proof}
	Let $U$ be the universal cover of $\bar{X}\smallsetminus V$. Let $v\in V$ be a vertex of $\bar{X}$. Since $\bar{X}$ is a locally CAT(0) cube complex we can see that $N_{\epsilon}(v)\smallsetminus\{v\}$ deformation retracts onto $\lk(v, \bar{X})$. Thus in the case that $\lk(v, \bar{X})$ is simply connected, the neighbourhood lifts to $U$. We are now concerned with the case where $\lk(v, \bar{X}) = S(S(L'))$. In this case we see that the cover of $\lk(v, \bar{X})$ is a copy of $\widetilde{S(S(L'))}$ which by \cite{leary_uncountably_2015other} is equal to $S(\widetilde{S(L')})$.
	
	We can complete $U$ to a CAT(0) cube complex which inherits a height function from $\bar{X}$. The ascending and descending links of this height function are all simply connected. Thus, the $\frac{1}{2}$-level set is connected and simply connected. So we see that the inclusion $\bar{X}_{\frac{1}{2}}\to \bar{X}\smallsetminus V$ induces an isomorphism on fundamental groups.
\end{proof}

Adding the vertices in $V$ back to $\bar{X}$ adds relations. Each time we add a vertex with link $S(S(L'))$ we are adding a new relation. Since $A_5$ is normally generated by one relation we can assume that we add one relation for each such vertex, namely, the relation obtained by coning off a normal generator of $A_5$ in $S(S(L'))$. 

We now have a presentation for $\pi_1(\bar{X})$ of the form $\langle S\mid R\cup T\rangle$, where relations in $T$ are in one to one correspondence with vertices in $\bar{X}$ with link $S(S(L'))$, Let $Y$ be the collection of these vertices. 
	
Given a subset $Z\subseteq Y$, let $T_Z$ be the subset of $T$ given by the relations corresponding to those vertices in $Z$. Let $H(Z)$ be the group given by the presentation $\langle S\mid A\cup T_Z\rangle.$

\begin{prop}\label{prop:usubz}
	If $Z\subseteq Y$, then $\mathcal{R}(H(Z), S, T) = T_Z$. 
\end{prop}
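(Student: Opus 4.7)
The plan is to dispose of the trivial inclusion, reduce the harder one to a claim about a single punctured vertex, and then prove non-triviality using a CAT(0) retraction in the universal cover.

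Every $r\in T_Z$ is a defining relator of $H(Z)=\langle S\mid R\cup T_Z\rangle$, so $T_Z\subseteq\mathcal{R}(H(Z),S,T)$ is immediate. For the reverse inclusion, fix $r_v\in T$ with $v\in Y\setminus Z$; since adjoining further relators can only collapse elements, there is a surjection $H(Z)\twoheadrightarrow H(Y\setminus\{v\})$, and it suffices to prove $r_v\neq 1$ in $H(Y\setminus\{v\})$.

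My first step is to realise $H(Y\setminus\{v\})$ as $\pi_1(\bar{X}\setminus\{v\})$. Re-running the argument of the preceding lemma with the single vertex $v$ omitted, re-attaching a vertex of $\bar{X}$ with simply connected link leaves $\pi_1$ unchanged, while re-attaching $w\in Y\setminus\{v\}$ adjoins precisely the single relator $r_w$ via the ``one relation per vertex'' simplification in the construction. Under this identification, $r_v$ is represented by a loop $\gamma_v\subset \lk(v,\bar{X})=S(S(L'))\subset \bar{X}\setminus\{v\}$ whose image under the retraction $S(S(L'))\to L$ is a normal generator of $\pi_1(L)=A_5$, so $\gamma_v$ is non-trivial in $\pi_1(S(S(L')))$.

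Now let $p\colon\tilde{X}\to\bar{X}$ be the universal cover, a CAT(0) cube complex, and fix a lift $\tilde{v}\in p^{-1}(v)$. The restriction $\tilde{X}\setminus p^{-1}(v)\to \bar{X}\setminus\{v\}$ is a covering, hence injective on $\pi_1$. The key geometric point is that $v$ is of type $(a_1,a_2,a_3)$ and every edge of $\bar{X}$ incident to $v$ changes exactly one $a_i$-coordinate to a $b_i$-coordinate, so every vertex adjacent to $v$ has a different type and does not lie in $Y$. Consequently $\lk(\tilde{v},\tilde{X})$, realised as a small metric sphere around $\tilde{v}$, lies entirely inside $\tilde{X}\setminus p^{-1}(v)$, and the CAT(0) geodesic retraction from $\tilde{v}$ restricts to a retraction
\[
\tilde{X}\setminus p^{-1}(v)\longrightarrow \lk(\tilde{v},\tilde{X})\cong S(S(L')).
\]
A retraction induces an injection on $\pi_1$, so the lift of $\gamma_v$ around $\tilde{v}$ is non-trivial in $\pi_1(\tilde{X}\setminus p^{-1}(v))$. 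Pushing down through the covering gives $r_v\neq 1$ in $H(Y\setminus\{v\})$, as required.

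The step I expect to be the main obstacle is the identification $\pi_1(\bar{X}\setminus\{v\})\cong H(Y\setminus\{v\})$: this relies on the Morse-theoretic setup of the previous section and the ``one relation per vertex'' simplification, which in turn rests on $A_5$ being normally generated by a single element. Once that is in place, the CAT(0) retraction step is robust, needing only the tripartite coordinate structure of $\bar{X}$ to ensure that no neighbour of $v$ lies in $Y$.
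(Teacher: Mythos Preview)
Your proof is correct and shares the paper's core idea: reduce to showing $r_v\neq 1$ in $H(Y\setminus\{v\})\cong\pi_1(\bar X\setminus\{v\})$, then exhibit a retraction from a CAT(0) model onto a copy of $S(S(L'))$ containing the loop. The difference is in which CAT(0) space is used. The paper argues by contradiction: assuming $t$ is trivial in $\pi_1(\bar X\setminus\{v\})$, the universal cover $U$ of $\bar X\setminus\{v\}$ can be completed over the punctures to a CAT(0) cube complex, and then one retracts onto the link of an added vertex $w$ to find that $t$ is nontrivial after all. You instead take the already-available CAT(0) universal cover $\tilde X$ of $\bar X$, pass to the covering space $\tilde X\setminus p^{-1}(v)\to\bar X\setminus\{v\}$, and retract onto $\lk(\tilde v,\tilde X)$ directly. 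Your route is slightly cleaner---no contradiction hypothesis and no completion step---while the paper's route is closer to the Leary-style machinery used elsewhere in the section. Both rely on the same combinatorial fact (neighbours of a type-$(a_1,a_2,a_3)$ vertex are of different type, so the link sphere avoids the removed set), which you make explicit and the paper leaves implicit.
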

\begin{proof}
	By definition of $H(Z)$ we can see that $T_Z\subseteq \mathcal{R}(H(Z), S, T)$. 
	
	To prove the other direction we will show that for all $t\in T$, we have that $t\notin \langle\langle T\smallsetminus\{t\}\rangle\rangle.$
	
	The relation $t$ corresponds to a vertex $v$ in $\bar{X}$. 	
	Since the loop representing $t$ is trivial and normally generates $\pi_1(S(S(L')))$ we see that a small neighbourhood of $v$ lifts to the universal cover $U$. We can complete this cover by adding in the missing vertices, let $w$ be one of these vertices. Since this cover is CAT(0) we see that $U$ retracts onto $\lk(w, U) = S(S(L'))$ and thus $t$ is non-trivial. This gives us the required contradiction. 
\end{proof}

\begin{prop}\label{prop:theyarefp2}
	The groups $H(Z)$ are of type $FP_2$. 
\end{prop}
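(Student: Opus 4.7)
The strategy is to adapt the argument of Corollary~\ref{cor:fp2notf2}, which established that $H = H(Y)$ is of type $FP_2$, to an arbitrary subset $Z \subseteq Y$.

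First I would build a non-positively curved cube complex $\bar X_Z$ with $\pi_1(\bar X_Z) = H(Z)$. Starting from $\bar X$, remove the open star of each vertex $v \in Y \setminus Z$, leaving a copy of the link $\lk(v, \bar X) = S(S(L'))$ as boundary. By van Kampen's theorem, $\pi_1$ of the resulting space is obtained from $\pi_1(\bar X) = H$ by reintroducing the image of $\pi_1(S(S(L')))$ at each removed vertex. As observed in the lead-up to the presentation $\langle S \mid R \cup T\rangle$ for $H$, this image is normally generated by the single element $t_v$ (because $\pi_1(S(S(L')))$ surjects onto $A_5 = \pi_1(L)$ via the natural retraction and $A_5$ is normally generated by one element), so the net effect on the presentation is precisely to delete the relations in $T \setminus T_Z$, yielding $\pi_1(\bar X_Z) = \langle S \mid R \cup T_Z\rangle = H(Z)$.

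Next I would verify that the Morse-theoretic structure on $\bar X$ descends to $\bar X_Z$. At each retained vertex, the ascending and descending links are either unchanged from Table~\ref{table:ascdesc} or differ only by deletion of the finitely many cells corresponding to edges into removed stars; in every case the resulting link is still connected and $H_1$-acyclic. The only potentially problematic link is $S(L')$, which has trivial first homology by Lemma~\ref{lem:homologyofdouble} together with the perfectness of $\pi_1(L) = A_5$ (so $H_1(L) = 0$).

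Finally I would apply the homological form of the Bestvina--Brady/Brown criterion for $FP_2$ (Corollary~\ref{cor1}(1) with $n = 2$): since every ascending and descending link is connected and $H_1$-acyclic, the kernel of the Morse function on an appropriate cover of $\bar X_Z$ is of type $FP_2$, and by construction this kernel is $H(Z)$. The main obstacle is the $\pi_1$-bookkeeping in the first step: one must justify that removing the star of $v \in Y \setminus Z$ reintroduces only the single relation $t_v$, not all of $\pi_1(S(S(L')))$, which is the normal-generation argument invoked tacitly in the discussion before Proposition~\ref{prop:usubz}. A secondary complication is that $\bar X_Z$ may fail to admit a $\ZZ$-action preserving $Y \setminus Z$, so the final Brown-type step may need to be carried out via a direct cocompact $H(Z)$-action on a thickened compact level set rather than through a genuinely $\ZZ$-equivariant Morse function.
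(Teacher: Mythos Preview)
There is a genuine gap in the Morse-theoretic step. When you remove the open star of a vertex $v\in Y\setminus Z$, you delete every cell containing $v$, and this \emph{does} alter the ascending and descending links of retained vertices adjacent to $v$. Concretely, a vertex of type $(a_1,a_2,b_3)$ has ascending link $S(\hexagon)\ast S^0$ (Table~\ref{table:ascdesc}), where the $S^0$ factor records two neighbours of type $(a_1,a_2,a_3)$, i.e.\ two points of $Y$. If both lie in $Y\setminus Z$ --- which certainly occurs for many $Z$, for instance $Z=\emptyset$ --- the ascending link in your $\bar X_Z$ drops to $S(\hexagon)$; since $S(\hexagon)$ retracts onto $\hexagon\simeq S^1$, it has $H_1\neq 0$. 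So the claim ``in every case the resulting link is still connected and $H_1$-acyclic'' is false, and Corollary~\ref{cor1}(1) does not apply. Passing to a cover does not help, since links are local and unchanged under covering maps.

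The paper avoids this by a different construction: it removes only the \emph{points} $W=Y\setminus Z$ (not their stars), passes to the universal cover of $\bar X\setminus W$, and then takes the metric \emph{completion} $\widetilde{X(Z)}$. Completion reinserts a vertex over each deleted point, and by the Leary-type argument already used before Proposition~\ref{prop:usubz} the link there is the universal cover $\widetilde{S(S(L'))}=S\big(\widetilde{S(L')}\big)$. Thus $\widetilde{X(Z)}$ is a genuine CAT(0) cube complex on which $H(Z)$ acts, every retained vertex keeps its original link intact, and the only new ascending/descending link that appears is $\widetilde{S(L')}$, which is simply connected. The Morse theory then runs exactly as in Corollary~\ref{cor:fp2notf2}. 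The step you are missing is this pass-to-universal-cover-and-complete manoeuvre; without it the combinatorics of the links is not under control.
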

\begin{proof}
	The group $H(Z)$ is the fundamental group of $\bar{X}\smallsetminus W$ where $W$ is the set of vertices not corresponding to elements of $Z$. Taking the universal cover of this space and completing we obtain a CAT(0) cube complex $\widetilde{X(Z)}$ upon which $H(Z)$ acts. The Morse function lifts to this space. The ascending and descending links of this Morse function are simply connected, $S(L')$ or $\widetilde{S(L')}$. We can now apply Theorem\ref{bbmorse} to see that $H(Z)$ is of type $FP_2$.
\end{proof}

\begin{prop}\label{prop:theydonotcontainz2}
	The groups $H(Z)$ do not contain any copies of $\ZZ^2$. 
\end{prop}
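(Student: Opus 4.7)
The plan is to apply the flat torus theorem to the action of $H(Z)$ on the CAT(0) cube complex $\widetilde{X(Z)}$ from the proof of Proposition \ref{prop:theyarefp2}, producing a flat plane from any hypothetical $\ZZ^2$ subgroup, and then to eliminate this flat plane by rerunning the fly-map argument used for hyperbolicity in Theorems \ref{thm:transinter} and \ref{lem:notransinter}.

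Concretely, $H(Z)$ acts properly on $\widetilde{X(Z)}$ by cubical isometries, which are semi-simple, so the flat torus theorem (Bridson--Haefliger, II.7.1) applies: if $\ZZ^2 \leq H(Z)$ then there is an isometric embedding $i\colon \EE^2 \to \widetilde{X(Z)}$. The key observation that lets me import the hyperbolicity argument is that the vertices removed in forming $\bar{X}\smallsetminus W$ are all of type $(a_1, a_2, a_3)$ and are therefore disjoint from the branching locus of Section \ref{sec:branched}. Consequently, the natural cellular map $\widetilde{X(Z)} \to \bar{X}$ is a local isometry in a neighborhood of the preimage $\overline{Y}$ of the branching locus, so $\overline{Y}$ is a locally isometrically embedded subcomplex of $\widetilde{X(Z)}$ with the same local structure as the corresponding subcomplex in $\widetilde{\xbarcplx}$. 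The tripartite hyperplane structure on $\xcplx$ lifts through $\bar{X}$ to $\widetilde{X(Z)}$, so fly maps are available. The cube-by-cube analysis of Theorem \ref{thm:transinter} uses only the link types of vertices in the branching locus (types like $(a_1, b_2, a_3)$ or $(b_1, a_2, b_3)$, never $(a_1, a_2, a_3)$), and these links are unchanged in $\widetilde{X(Z)}$; hence the proof carries over verbatim to produce a transverse intersection point $p \in i(\EE^2) \cap \overline{Y}$. Then Lemma \ref{lem:notransinter} applies at $p$: a fly map embeds $\lk(p, \widetilde{X(Z)}) \hookrightarrow S^0 \ast S^0 \ast S^0$ and yields a 4-cycle contradicting the 4-cycle-free property (Remark \ref{rem:4loops}) that was built into the branched cover of Section \ref{sec:branched} and is preserved in $\widetilde{X(Z)}$.

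The main obstacle is confirming that these arguments really do transfer without secretly invoking the hyperbolicity of $\widetilde{\xbarcplx}$, particularly near the completion vertices (those with link $\widetilde{S(S(L'))}$ rather than $S(S(L'))$). This is resolved by the observation above: all completion vertices, like all vertices in $W$, are of type $(a_1, a_2, a_3)$ and therefore lie off the branching locus, so the analysis near $\overline{Y}$ and the transverse intersection point $p$ takes place entirely in a region where $\widetilde{X(Z)}$ has identical local structure to $\widetilde{\xbarcplx}$, and no additional case analysis is required.
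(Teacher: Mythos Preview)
Your proposal is correct and follows essentially the same route as the paper: properness of the $H(Z)$-action on $\widetilde{X(Z)}$, the Flat Torus Theorem to produce a hypothetical flat, and then the transverse-intersection/fly-map argument from the hyperbolicity proof to rule it out. Your additional observation that the removed and completion vertices are all of type $(a_1,a_2,a_3)$, hence disjoint from $\overline{Y}$, is exactly the reason the paper can simply say ``applying the proof of Theorem~\ref{thm:hypbranch}''; the only point you leave implicit that the paper spells out is why the action is proper (stabilisers are trivial off the completion vertices and copies of $A_5$ there).
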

\begin{proof}
	The action of $H(Z)$ on $\widetilde{X(Z)}$ is proper. The action is free away from the vertices with link $\widetilde{S(S(L'))}$ at these vertices the group acts like $A_5$ and thus the action is proper. We can now appeal to the Flat Torus Theorem \cite{bridson_metric_1999} to see that were there a copy of $\ZZ^2$ in $H$, then there would be an isometrically embedded flat plane in $\widetilde{X(Z)}$. Applying the proof of Theorem \ref{thm:hypbranch} we can see that no flat plane exists. 
\end{proof}

\begin{thmm}
	There are uncountably many groups of type $FP_2$ none of which contains a Baumslag-Solitar group or infinite torsion subgroups. 
\end{thmm}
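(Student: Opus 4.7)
The plan is to combine Propositions \ref{prop:countablymany}--\ref{prop:theydonotcontainz2} with a translation length argument in the CAT(0) cube complex $\widetilde{X(Z)}$ to rule out every Baumslag--Solitar subgroup. First I would record that $Y$ is countably infinite: the vertices of the compact complex $X$ that have link $S(S(L'))$ produce infinite $H$-orbits in the $H$-cover $\bar X$, and $\bar X$ is a countable cube complex.

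Next I would run the standard uncountability argument. Fix $Z' \subset Y$. If $\psi \colon H(Z) \to H(Z')$ is an isomorphism, set $S' = \psi(S)$; by Proposition \ref{prop:usubz},
\[
T_Z = \mathcal{R}(H(Z), S, T) = \mathcal{R}(H(Z'), S', T).
\]
By Proposition \ref{prop:countablymany}, the right-hand side takes only countably many values as $S'$ varies over $n$-tuples in $H(Z')$. Hence only countably many $Z \subset Y$ yield $H(Z) \cong H(Z')$. Since $|2^Y| = 2^{\aleph_0}$, the collection $\{H(Z) : Z \subset Y\}$ partitions into $2^{\aleph_0}$ isomorphism classes, each of type $FP_2$ by Proposition \ref{prop:theyarefp2}.

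To exclude every Baumslag--Solitar subgroup $BS(m,n)$ I would split into two cases. When $|m| = |n|$, a direct computation shows that $y^2$ commutes with $x^m$ in $BS(m,n)$ and that $\langle x^m, y^2 \rangle \cong \ZZ^2$, so any embedding $BS(m,n) \hookrightarrow H(Z)$ would produce a $\ZZ^2$ subgroup, contradicting Proposition \ref{prop:theydonotcontainz2}. When $|m| \neq |n|$, I would use that $H(Z)$ acts properly and cellularly on the CAT(0) cube complex $\widetilde{X(Z)}$. The generator $x \in BS(m,n)$ has infinite order; properness forbids an elliptic infinite-order element (its stabiliser would be infinite), and every cellular isometry of a CAT(0) cube complex is semisimple \cite{bridson_metric_1999}, so $x$ must be hyperbolic with translation length $\tau(x)>0$. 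The defining relation $y^{-1} x^m y = x^n$ together with conjugacy invariance of translation length forces
\[
|m|\,\tau(x) = \tau(x^m) = \tau(x^n) = |n|\,\tau(x),
\]
contradicting $|m| \neq |n|$.

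The main obstacle I expect is the $|m| \neq |n|$ case, since the action of $H(Z)$ on $\widetilde{X(Z)}$ is proper but not free: finite $A_5$ stabilisers appear at the branch vertices. I must verify that the translation-length machinery still applies, which uses only that the action is proper and cellular, and both the semisimplicity of cellular isometries and the conjugacy invariance of $\tau$ are standard facts in CAT(0) geometry. With these in place the theorem follows.
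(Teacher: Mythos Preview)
Your proposal is correct and follows essentially the same route as the paper: the uncountability comes from Propositions~\ref{prop:countablymany} and~\ref{prop:usubz}, type $FP_2$ from Proposition~\ref{prop:theyarefp2}, the $|m|=|n|$ case reduces to the $\ZZ^2$ obstruction of Proposition~\ref{prop:theydonotcontainz2}, and the $|m|\neq|n|$ case is the translation-length argument for proper semisimple actions on CAT(0) spaces. The only difference is cosmetic: the paper dispatches the $|m|\neq|n|$ case by citing \cite[p.~439]{bridson_metric_1999} directly, whereas you unpack that citation into the explicit $|m|\,\tau(x)=|n|\,\tau(x)$ computation, and you are slightly more careful than the paper in treating $BS(m,\pm m)$ for $m>1$.
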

\begin{proof}
	Since $T$ is infinite there are uncountably many subsets $T_Z$. By Propositions \ref{prop:countablymany} and \ref{prop:usubz} we see that there are uncountably many groups in this family. 
	
	All of these groups are of type $FP_2$ by Proposition \ref{prop:theyarefp2}. They also do not contain a copy of $\ZZ^2$ by Proposition \ref{prop:theydonotcontainz2}. 
	
	We are now left in the case that these groups contain $BS(1, n)$ for $n\neq \pm 1$. By \cite[pg. 439]{bridson_metric_1999} groups acting properly and semi-simply on CAT(0) spaces do not contain $BS(m, n)$ for $|n|\neq |m|$. 
	
	We now prove the statement about infinite torsion subgroups. 
	Each of the groups constructed acts on a locally finite CAT(0) cube complex. 
	Thus any torsion group fixes a point by \cite[Theorem 5.1]{sageev_ends_1995}. 
	However, the group action is proper so point stabilisers are finite. 
\end{proof}

\appendix

\section{Embedding cube complexes into products of trees and fly maps \\ {\small (by Robert Kropholler and  Federico Vigolo)}}
\label{sec:embedding_in_trees}

This appendix is devoted to justify the construction of fly maps used in Section~\ref{sec:flymaps}.
Many parts of this appendix are well-known, we include them to give the reader a better understanding of how fly maps are defined.  
The definition of a cube complex with $n$ directions is given in \cref{def:directions}.

\begin{definition}\label{def:fly.map} 
	Let $X$ be a cube complex with $n$ directions. A {\em fly map} $f\colon X\to \RR^n$ for an embedded flat $i\colon \EE^k\to X$ is a map $f\colon X\to \RR^n$ obtained as a composition
	\[
	X\xrightarrow{\ \iota\ } T_1\times\cdots \times T_n \xrightarrow{f_1\times\cdots\times f_n} \RR^n
	\]
	where the maps $f_j\colon T_j\to \RR$ are cubical maps that restrict to isometries on $L_j = p_j(\iota(i(\EE^k)))$. 
\end{definition}

In this appendix we will define all the relevant terms and show that these maps always exist.

Let $X$ be a CAT(0) cube complex. A \emph{hyperplane} $\hpl$ of $X$ is an equivalence class of parallel edges of $X$. We can identify a hyperplane $\hpl$ with the CAT(0) cube complex spanned by the midpoints of the edges in $\hpl$ (\emph{i.e.} the smallest convex set containing them). Every hyperplane $\hpl$ disconnects $X$ into two \emph{half spaces} $\hsp$ and $\hsp^*$. 
We denote the set of hyperplanes in $X$ by $\Hpl(X)$ and the set of half spaces by $\Hsp(X)$.

A {\em pocset} is a partially ordered set $(S,<)$ with an involution $*\colon S\to S$ such that 
\begin{enumerate}
	\item $\fks \neq \fks^*$ for all $\fks\in S$ and $\fks$ and $\fks^*$ are incomparable, 
	\item if $\fks<\fkt$, then $\fkt^*<\fks^*$. 
\end{enumerate}
We will denote the pocset simply by $S$ if the ordering $<$ is clear by the context.

The set of half spaces $\Hsp(X)$ of a CAT(0) cube complex comes naturally equipped with a pocset structure, where the involution sends a half space $\hsp$ to its complement $\hsp^*$ and the partial ordering is given by the inclusion.

Generalising the idea of ultrafilters on the subsets of an index set, an {\em ultrafilter} on a pocset $S$ is a subset $\CU\subseteq S$ such that 
\begin{enumerate}
	\item for each $\fks\in S$ exactly one of $\fks$ or $\fks^*$ is in $\CU$ (completeness), 
	\item if $\fks\in \CU$ and $\fks<\fkt$, then $\fkt\in \CU$ (consistency). 
\end{enumerate}

An ultrafilter satisfies the {\em descending chain condition} (DCC) if every descending chain $\fks_1>\fks_2>\cdots$ must terminate in finitely many steps. 

Given a pocset $S$ we can construct a cube complex $X(S)$ as follows:
the vertices of $X(S)$ are the ultrafilters satisfying the DCC; two ultrafilters $\CU_1, \CU_2$ are joined by an edge if and only if they differ by precisely two elements: $\CU_1\triangle \CU_2 = \{\fks, \fks^*\}$. The graph thus obtained is the $1$\=/skeleton of $X(S)$, and we then add higher dimensional cubes to this graph whenever their $1$\=/skeleton is present. The following is well\=/known:

\begin{thm}[Sageev \cite{sageev_ends_1995}]
	The cube complex $X(S)$ is CAT(0). 
\end{thm}

Hyperplanes of the cube complex $X(S)$ are in one\=/to\=/one correspondence with pairs of conjugated elements of $S$ i.e. fixed sets $\{\fks,\fks^*\}\subseteq S$. All the edges determined by $\CU_1\triangle \CU_2 = \{\fks, \fks^*\}$ are parallel and determine a hyperplane $\hat\fks$ of $X(S)$. The (vertices of the) half spaces determined by $\hat\fks$ can be described as
\begin{align*}
\fks &\coloneqq\bigbrace{\CU\bigmid\CU\subseteq S\text{ ultrafilter with DCC},\ \fks\in\CU} \\
\fks^* &\coloneqq\bigbrace{\CU\bigmid\CU\subseteq S\text{ ultrafilter with DCC},\ \fks^*\in\CU}.
\end{align*}
In particular, the pocset of hyperspaces $\Hsp(X(S))$ is equal to $S$ itself.

Vice versa, since $\Hsp(X)$ is a pocset one can apply Sageev's construction to it and one can prove the following theorem.

\begin{theorem}[Roller duality \cite{sageev_cat0_2014}]
	Let $X$ be a CAT(0) cube complex. Then 
	\[
	X(\Hsp(X)) = X.
	\]
\end{theorem}

A \emph{subpocset} of $S$ is a subset $A\subseteq S$ with $A=A^*$ equipped with the partial ordering induced by $S$. We say that a pocset $S$ {\em splits}, denoted $S=A\dotsqcup B$, if there exist two non\=/empty subpocsets $A,B\in S$ such that
\begin{enumerate}
	\item the set $S$ is the disjoint union $A\sqcup B$,
	\item every $\fka\in A$ is incomparable with every $\fkb\in B$.
\end{enumerate}
It is proved in \cite{caprace_rank_2011} that $X(S)=X(A)\times X(B)$ if and only if $S=A\dotsqcup B$.

\

We need to study $\ell^1$\=/isometric embeddings of cubes complexes into finite products of trees (see also \cite{bandelt_embedding_1989} and \cite{ChHa13}). It is shown in \cite{haglund_isometries_2007} that the $\ell^1$\=/distance between two vertices $v,w\in X$ is equal to the number of hyperplanes separating them. That is, 
\[
d_{\ell^1}(v,w)=\abs{\bigbrace{\fkh\in \Hsp(X)\mid v\in\fkh,\ w\notin\fkh}}. 
\]

\begin{definition}\label{def:directions}
	A choice of {\em $n$ directions} on a CAT(0) cube complex $X$ is a decomposition of the set of hyperplanes as a disjoint union $\Hpl= \Hpl_1\sqcup\dots\sqcup\Hpl_n$ such that for every $i=1,\ldots,n$ any two hyperplanes in $\Hpl_i$ are disjoint. Equivalently, a choice of $n$ directions is a decomposition of the pocset of half\=/spaces $\Hsp$ as a disjoint union of $n$ subpocsets $\Hsp=\Hsp_1\sqcup\cdots\sqcup\Hsp_n$ such that for all $\hsp_1, \hsp_2\in \Hsp_i$ either $\hsp_1 \leq \hsp_2$ or $\hsp_1^*\leq \hsp_2^*$ or $\hsp_1^*\leq \hsp_2$ or $\hsp_1\leq \hsp_2^*$.
\end{definition}

\begin{rem}
	If $\Gamma_A$ and $\Gamma_B$ are $n$\=/coloured flag complexes, then the universal cover of the associated cube complex with coupled links $\xcplx$ (as defined in \cite{kropholler_new_2018}) is CAT(0) and has a natural choice of $n$ directions. Namely, $\widehat \CH_i$ is the family of hyperplanes that are perpendicular to edges in the $i$\=/th coordinate. 
\end{rem}

\begin{rem}
	The decomposition $\Hsp=\Hsp_1\sqcup\cdots\sqcup\Hsp_n$ does not need to be a splitting of the pocset $\Hsp$. Indeed, in general there will be comparable half spaces $\hsp\in\Hsp_i$ and $\hsp'\in\Hsp_j$ for some $i\neq j$.
\end{rem}

\begin{rem}
	Recall that the \emph{crossing graph} of a CAT(0) cube complex is the graph whose vertices are the hyperplanes and where two vertices are joined by an edge if the corresponding hyperplanes intersect.
	Choosing $n$ directions on $X$ is equivalent to choosing an $n$\=/colouring for the crossing graph. 
\end{rem}

We define a map to a product of trees as follows. Let $X$ be a cube complex with $n$ directions. Let $T_i\coloneqq X(\mathcal{H}_i)$ be the cube complex obtained applying Sageev's construction to the pocset $\CH_i$. It is easy to show that $T_i$ is a tree for each $i$.

Let $Y = \prod_{i=1}^nT_i$. Then $\mathcal{H}(Y) = \mathcal{H}_1\dotsqcup\dots\dotsqcup \mathcal{H}_n$. In particular, $\Hsp(X)$ and $\Hsp(Y)$ coincide as sets. We will show that the map defined by the identity from $\mathcal{H}(X)\to \mathcal{H}(Y)$ induces a cubical map $\iota\colon X\to Y$.

\begin{lem}
	An ultrafilter $\mathcal{U}$ on $\mathcal{H}(X)$ satisfying the descending chain conditions defines an ultrafilter on $\mathcal{H}(Y)$ satisfying the descending chain condition. 
\end{lem}
\begin{proof}
	When seen as a subset of $\Hsp(Y)$, the set $\mathcal{U}$ will still satisfy the completeness axiom as the complementation structure on $\mathcal{H}(X)$ is the same as that on $\mathcal{H}(Y)$. We must check the consistency axiom, but it is clear that being comparable is weaker in the set $\mathcal{H}(Y)$ than in $\mathcal{H}(X)$, so the consistency of $\CU$ in $\CH(Y)$ is trivially implied by the consistency of $\CU$ in $\CH(X)$. 
	
	The above argument also implies that any descending chain in $\CU$ terminates, as the descending chains in $\mathcal{H}(Y)$ are descending chains in $\mathcal{H}(X)$ as well.  
\end{proof}

It follows from the above that the identity map $\CH(X)\to\CH(Y)$ induces an injection between the vertex sets of the associated cube complexes $\iota\colon V(X)\hookrightarrow V(Y)$\textemdash here we are implicitly using Roller's duality on Sageev's construction $X=X\bigparen{\CH(X)}$. The injection $\iota$ extends to a cubical injection $\iota\colon X\to Y$. In fact, something stronger is true:

\begin{prop}\label{prop:iota.ell.1.embedding}
	The map $\iota\colon V(X)\hookrightarrow V(Y)$ extends to an $\ell^1$\=/embedding of cube complexes $\iota\colon X\hookrightarrow Y$. Moreover, taking the preimage $\iota^{-1}$ induces a bijective correspondence between hyperplanes of $Y$ and $X$.
\end{prop}
\begin{proof}
	The key fact is that in Sageev's construction hyperplanes of $X(S)$ correspond to pairs of conjugate elements $\braces{\fks,\fks^*}$. In our setting, the pocsets $\Hsp(X)$ and $\Hsp(Y)$ do not just coincide as sets: they also have the same involution operation. It follows that there is a natural correspondence between the hyperplanes of $X$ and $Y$ as they are given in both cases by couples $\braces{\hsp,\hsp^*}$.
	
	Recall that, according to Sageev's construction, two ultrafilters $\CU$ and $\CU'$ with the DCC are linked by an edge if and only if they differ by two elements $\CU\triangle\CU=\braces{\fkh,\fkh^*}$ (\emph{i.e.} two points are linked by an edge if and only if they are separated by a unique hyperplane). From the discussion above, this condition is independent on whether we are looking at the pocset $\Hsp(X)$ or $\Hsp(Y)$, therefore the map $\iota$ extends to a cubical map.
	
	The map $\iota$ is an $\ell^1$\=/embedding because the $\ell^1$\=/metric is equal to the number of hyperplanes separating them (a hyperplane $\braces{\hsp,\hsp^*}$ separates two points $\CU$ and $\CU'$ in $X=X(\Hsp(X))$ if and only if $\braces{\hsp,\hsp^*}\subseteq\CU\triangle\CU'$). This condition is again preserved when passing from $\Hsp(X)$ to $\Hsp(Y)$. 
	
	Let $\hpl_X$ be the hyperplane of $X$ corresponding to the pair $\braces{\hsp,\hsp^*}$ (seen as the span of middle points of parallel edges) and let $\hpl_Y$ be the hyperplane of $Y$ determined by the same couple $\braces{\hsp,\hsp^*}$. To prove the `Moreover' part of the statement we need to show that the preimage $\iota^{-1}(\hpl_Y)$ coincides with $\hpl_X$. This is another easy consequence of the above discussion. Indeed, an edge $e$ between two vertices in $X$ uniquely determines a hyperplane $\hpl_X=\braces{\hsp,\hsp^*}$ and its image $\iota(e)$ will uniquely determine the corresponding hyperplane $\hpl_Y=\braces{\hsp,\hsp^*}$. As $\hpl_Y$ is the hyperplane spanned be the midpoints of all (and only) the edges crossing it, it follows that the pre\=/image $\iota^{-1}\paren{\hpl_Y}$ intersects all (and only) the edges crossing $\hpl_X$ and it hence coincides with the hyperplane $\hpl_X$.
\end{proof}

\begin{rem}
	More concretely, the map $\iota$ is obtained as follows: the choice of $n$ directions $\Hpl_1\sqcup,\cdots\sqcup\Hpl_n$ identifies $n$ trees $T_1,\ldots, T_n$, and $\iota$ is then defined by sending a point $x\in X$ to the point in $Y=T_1\times\cdots\times T_n$ whose $i$\=/th coordinate is determined by the relative position of $x$ with respect to the hyperplanes in $\Hpl_i$. This can be visualised by seeing that crossing a hyperplane in $\Hpl_i$ corresponds to crossing the corresponding edge of $T_i$. The fact that this procedure is well defined depends on the fact that the cube complex $X$ is CAT(0). Sageev's construction is a useful tool formalise this argument.
\end{rem}

Recall that the cubical neighbourhood $N(A)$ of a set $A$ in a cube complex $X$ is the smallest subcomplex of $X$ containing $A$. Given a hyperplane $\hpl\subseteq X$, its cubical neighbourhood $N(\hpl)$ is isometric to $\hpl\times[0,1]$. We will denote by $\interior{N}(\hpl)$ the interior of the cubical neighbourhood $N(\hpl)$.

Let $X$ be a cube complex with $n$ directions. Let $Y=T_1\times\cdots\times T_n$ be the associated product of trees and let $p_i\colon Y\to T_i$ be the projection to the $i$-th tree. Every edge in $T_i$ identifies a hyperplane $\hpl_{T_i}$ in $\Hpl(T_i)=\Hpl_i$ and hence a hyperplane $\hpl_X$ in $X$. We have the following:

\begin{lem}\label{lem:preimage.of.open.edges}
	Let $\interior{e}$ be the interior of an edge of $T_i$ and let $\hpl_i\in \Hpl_i$ be the corresponding hyperplane in $X$. Then $(p_i\circ \iota)^{-1}(\interior{e}) = \interior{\mathcal{N}}(\hpl)$.
	
	In particular, for every $x\in \interior e$ the pre\=/image $(p_i\circ \iota)^{-1}(x)$ is a convex subset of $X$ isometric to $\hpl$.
\end{lem}

\begin{proof}
	Since $Y$ is a direct product, it is clear that $p_i^{-1}(\interior e)$ coincides with $\interior N(\hpl_Y)\subseteq Y$. Now the lemma follows easily from Proposition~\ref{prop:iota.ell.1.embedding} because we proved that the pre\=/image $\iota^{-1}(\hpl_Y)$ coincides with $\hpl_X$.
\end{proof}

\begin{rem}
	In Lemma~\ref{lem:preimage.of.open.edges} it is important to restrict to the interior of the edge $e$. Indeed, the pre\=/image of the extremal points will not be contained in $N(\hpl_X)$ in general. 
\end{rem}

\begin{lem}\label{linelem}
	Let $i\colon \EE^k\hookrightarrow X$ be an isometric embedding of an Euclidean flat. Then $p_j\circ\iota\circ i(\EE^k)\to T_j$ has image contained in a geodesic. 
\end{lem}
\begin{proof}
	If the image is not contained in a geodesic then it contains a branching point. That is, $p_j\circ\iota\circ i(\EE^k)\to T_j$ must intersect the interior three edges $e_1,e_2$ and $e_3$ of $T_i$ sharing a vertex $v\in T_i$. Let $x_r$ be a point in $\interior e_r\cap \bigparen{p_j\circ\iota\circ i(\EE^k)}$ for $r=1,2,3$. Then, by Lemma~\ref{lem:preimage.of.open.edges}, the pre\=/image $(p_j\circ\iota)^{-1}(x_r)$ will be a convex subset of $X$ that separates $X$ in two half spaces. It follows that $(p_j\circ\iota\circ i)^{-1}(x_r)$ is a convex subset of $\EE^k$ that separates $\EE^k$ and it must hence contain a hyperplane $E_r\cong\EE^{k-1}$\textemdash here $E_r$ is a hyperplane in the usual Euclidean sense: not as a cube complex. (Since $(p_j\circ\iota)^{-1}(x_r)$ is a parallel copy of a hyperplane $\hpl_r$ in $\interior N(\hpl_r)$, it is actually easy to show that $(p_j\circ\iota\circ i)^{-1}(x_r)$ is itself a hyperplane in $\EE^k$.)
	
	For every $r=1,2,3$ we can consider the component $H_r$ of $\EE^k\smallsetminus E_r$ \emph{not} containing the pre\=/image of the vertex $v$, and this yields to a contradiction because the spaces $H_r$ would form a facing triple. That is, they would be three disjoint halfspaces in $\EE^k$, and it is easy to see that there is no such triple in the Euclidean space.
\end{proof}

The existence of fly maps as by Definition~\ref{def:fly.map} follows easily from Lemma~\ref{linelem}. The last claim we have to prove is the following:

\begin{prop}\label{prop:fly.maps.l1.isometry.appendix}
	Let $\CN\bigparen{i(\EE^k)}\subseteq X$ be the cubical neighbourhood of an embedded flat, then any fly map $f$ restricts to a cubical $\ell^1$\=/embedding $\CN\bigparen{i(\EE^k)}\hookrightarrow \RR^n$. 
\end{prop}

\begin{proof}
    Recall that $L_j = p_j(\iota(i(\EE^k)))$. Enlarging it if necessary, we can assume that $L_j$ is a subcomplex of $T_j$ for every $j=1,\ldots,n$.
	It is clear from the definition that any fly map $f$ induces a cubical isometric embedding $\prod_{j=1}^n L_j\to\RR^n$ with respect to both the Euclidean and $\ell^1$\=/metrics induced by their cubical structures.
	
	Note that $\prod_{j=1}^n L_j\subseteq\prod_{j=1}^n T_j$ is a convex subset with respect to both the Euclidean and the $\ell^1$\=/metric, therefore the metric induced on $\prod_{j=1}^n L_j$ from the Euclidean (resp. $\ell^1$) metric of $\prod_{j=1}^n T_j$ coincides with the Euclidean (resp. $\ell^1$) metric induced from its cube complex structure. 
	
	It follows that the map induced from $f$ on $\prod_{j=1}^n L_j$ is an $\ell^1$\=/embedding also with respect the subspace metric. The statement now follows trivially from Proposition~\ref{prop:iota.ell.1.embedding} by noting that the image of $\CN\bigparen{i(\EE^k)}$ under $\iota$ is contained in $\prod_{j=1}^n L_j$.
\end{proof}

\begin{cor}\label{cor:flylinks}
	A fly map $f^x$ induces for every cube $c\in \CN\bigparen{i(\EE^k)}$ an embedding $\lk\bigparen{c, \CN\bigparen{i(\EE^k)}}\to \SS^0\ast\dots\ast \SS^0$.  
\end{cor}

\bibliographystyle{plain}
\bibliography{bibshort,otherref}

\begin{thebibliography}{10}

\bibitem{bandelt_embedding_1989}
H.-J. Bandelt and M.~van~de Vel.
\newblock Embedding topological median algebras in products of dendrons.
\newblock {\em Proceedings of the London Mathematical Society},
  s3-58(3):439--453, 1989.

\bibitem{bestvina_morse_1997}
M.~Bestvina and N.~Brady.
\newblock Morse theory and finiteness properties of groups.
\newblock {\em Invent. Math.}, 129(3):445--470, 1997.

\bibitem{brady_branched_1999}
N.~Brady.
\newblock Branched {Coverings} of {Cubical} {Complexes} and {Subgroups} of
  {Hyperbolic} {Groups}.
\newblock {\em J. Lond. Math. Soc.}, 60(2):461--480, 1999.

\bibitem{bridson_existence_1995}
M.~R. Bridson.
\newblock On the existence of flat planes in spaces of nonpositive curvature.
\newblock {\em Proceedings of the American Mathematical Society},
  123(1):223--223, January 1995.

\bibitem{bridson_metric_1999}
M.~R. Bridson and A.~Haefliger.
\newblock {\em Metric spaces of non-positive curvature}, volume 319 of {\em
  Grundlehren der {Mathematischen} {Wissenschaften} [{Fundamental} {Principles}
  of {Mathematical} {Sciences}]}.
\newblock Springer-Verlag, Berlin, 1999.

\bibitem{bux_bestvina-brady_1999other}
K.~Bux and C.~Gonzalez.
\newblock The {Bestvina}-{Brady} {Construction} {Revisited}: {Geometric}
  {Computation} of {$\Sigma$}-invariants for {Right}-{Angled} {Artin} {Groups}.
\newblock {\em Journal of the London Mathematical Society}, 60(03):793--801,
  December 1999.

\bibitem{caprace_rank_2011}
Pierre-Emmanuel Caprace and Michah Sageev.
\newblock Rank {Rigidity} for {Cat}(0) {Cube} {Complexes}.
\newblock {\em Geometric and Functional Analysis}, 21(4):851--891, August 2011.

\bibitem{ChHa13}
Victor Chepoi and Mark~F. Hagen.
\newblock On embeddings of cat(0) cube complexes into products of trees via
  colouring their hyperplanes.
\newblock {\em Journal of Combinatorial Theory, Series B}, 103(4):428 -- 467,
  2013.

\bibitem{gersten_subgroups_1996}
S.~M. Gersten.
\newblock Subgroups of {Word} {Hyperbolic} {Groups} in {Dimension} 2.
\newblock {\em Journal of the London Mathematical Society}, 54(2):261--283,
  October 1996.

\bibitem{haglund_isometries_2007}
Fr\'ed\'eric Haglund.
\newblock Isometries of cat(0) cube complexes are semi-simple, 2007.

\bibitem{kropholler_almost_2018}
R.~Kropholler.
\newblock Almost {Hyperbolic} {Groups} with {Almost} {Finitely} {Presented}
  {Subgroups}.
\newblock {\em arXiv:1802.01658 [math]}, February 2018.
\newblock arXiv: 1802.01658.

\bibitem{kropholler_finitelypresented_2018}
Robert Kropholler and Giles Gardam.
\newblock Hyperbolic groups with finitely presented subgroups not of type
  ${F_3}$, 2018.
\newblock arXiv:1808.09505.

\bibitem{kropholler_new_2018}
Robert Kropholler and Federico Vigolo.
\newblock A new construction of {CAT}(0) cube complexes.
\newblock {\em arXiv:1802.01885 [math]}, February 2018.
\newblock arXiv: 1802.01885.

\bibitem{leary_uncountably_2015other}
Ian~J. Leary.
\newblock Uncountably many groups of type {$FP$}.
\newblock {\em Proc. Lond. Math. Soc. (3)}, 117(2):246--276, 2018.

\bibitem{lodha_hyperbolic_2017other}
Yash Lodha.
\newblock A hyperbolic group with a finitely presented subgroup that is not of
  type {$FP_3$}.
\newblock In {\em Geometric and cohomological group theory}, volume 444 of {\em
  London Math. Soc. Lecture Note Ser.}, pages 67--81. Cambridge Univ. Press,
  Cambridge, 2018.

\bibitem{rips_subgroups_1982}
E.~Rips.
\newblock Subgroups of small cancellation groups.
\newblock {\em Bull. Lond. Math. Soc.}, 14(1):45--47, 1982.

\bibitem{sageev_ends_1995}
M.~Sageev.
\newblock Ends of {Group} {Pairs} and {Non}-{Positively} {Curved} {Cube}
  {Complexes}.
\newblock {\em Proceedings of the London Mathematical Society},
  s3-71(3):585--617, November 1995.

\bibitem{sageev_cat0_2014}
Michah Sageev.
\newblock {CAT}(0) cube complexes and groups.
\newblock In {\em Geometric group theory}, volume~21 of {\em {IAS}/{Park}
  {City} {Math}. {Ser}.}, pages 7--54. Amer. Math. Soc., Providence, RI, 2014.

\bibitem{wise_structure_2011}
D.~T. Wise.
\newblock The {Structure} of {Groups} with a {Quasiconvex} {Hierarchy}.
\newblock 2011.

\end{thebibliography}

\end{document}